\newtheorem{thm}{Theorem}
\newtheorem{lem}[thm]{Lemma}
\newtheorem{cor}[thm]{Corollary}
\newtheorem{prop}[thm]{Proposition}
\newtheorem{defn}[thm]{Definition}
\newtheorem{rem}[thm]{Remark}
\newcommand{\R}{\mathbb{R}}
\newcommand{\Z}{\mathbb{Z}}
\newcommand{\tn}{\tilde{\nabla}}
\newcommand{\eps}{\epsilon}
\DeclareMathOperator{\diam}{diam}
\DeclareMathOperator{\id}{Id}
\numberwithin{equation}{section}
\numberwithin{thm}{section}
\title{Pulsating Fronts in a 2D Reactive Boussinesq System}% Declares the document's title. 
\author{Christopher Henderson}
\begin{document}           % End of preamble and beginning of text.

\maketitle                 % Produces the title.

\begin{abstract}
We consider a reactive Boussinesq system with no stress boundary conditions in a periodic domain which is unbounded in one direction.  Specifically, we couple the reaction-advection-diffusion equation for the temperature, $T$, and the linearized Navier-Stokes equation with the Boussinesq approximation for the fluid flow, $u$.  We show that this system admits smooth pulsating front solutions that propagate with a positive, fixed speed.
\end{abstract}

\section{Introduction}\label{Intro}

%textwidth in cm: \printinunitsof{cm}\prntlen{\textwidth}
%oddsidemargine : \printinunitsof{cm}\prntlen{\oddsidemargin}
%even:  \printinunitsof{cm}\prntlen{\evensidemargin}

In this paper, we establish the existence of pulsating traveling front solutions to the reactive Boussinesq system with the no stress boundary conditions.  The reactant, or temperature, $T$, and the fluid velocity, $u$, satisfy a system composed of coupling the reaction-advection-diffusion  equation for $T$ and the linearized Navier-Stokes equation for $u$ via the Boussinesq approximation as below
\begin{equation}\label{the_PDE_unrotated}
	\begin{split}
		T_t + u \cdot \nabla T &= \Delta T + f(x,z,T)\\
		u_t-\Delta u + \nabla p &= T\hat{z}\\
		\nabla \cdot u &= 0.
	\end{split}
\end{equation}
Here, the reaction term, $f$, is smooth and ignition type, and $\hat{z}$ is the vertical unit vector in $\R^2$.  That is,
there exists a positive number $\theta_0 \in (0,1)$ such that
\begin{equation}\label{ignition}
	\begin{split}
		f(x,z,r) &=0 \qquad \mbox{ for all } r \leq \theta_0\\
		f(x,z,r) &> 0 \qquad \mbox{ for all } r \in (\theta_0, 1)\\
		f(x,z,r) &\leq 0 \qquad \mbox{ for all } r \geq 1.
	\end{split}
\end{equation}
We consider \eqref{the_PDE_unrotated} in a smooth, periodic cylinder $\Omega\subset \R^2$.  Specifically, there is a unit vector $\hat{k}$ and positive real numbers $\ell$ and $\lambda$ such that 
\[
	\Omega + \ell\hat{k} = \Omega ~~\text{and}~~ \Omega \subset \{(x,z)\in\R^2: |(x,z)\cdot \hat{k}^\perp| \leq \lambda\}.
\]
In addition, we require that $f$ is $\ell$-periodic in the direction of $\hat{k}$.  In addition, we assume that there exists a positive constant $C$ and two constants $r_1, r_2 \in (\theta_0, 1)$ such that $f(x,z,r) \geq C$ for all $(x,z,r)\in \Omega\times (r_1,r_2)$.  For ease of notation, when no confusion will arise, we will refer to $f(x,z,T)$ simply as $f(T)$.  Notice that the maximum principle implies that $0 \leq T \leq 1$.

In order to simplify the notation, it is convenient to rotate this domain to make it horizontal.  In other words, we change variables so that $\Omega \subset [-\lambda, \lambda]\times \R$ and so that $\Omega$ is $\ell$-periodic in $x$.  Then \eqref{the_PDE_unrotated} becomes
\begin{equation}\label{the_PDE}
	\begin{split}
		T_t + u \cdot \nabla T &= \Delta T + f(x,z,T)\\
		u_t-\Delta u + \nabla p &= T\hat{e}\\
		\nabla \cdot u &= 0,
	\end{split}
\end{equation}
where $u$ is the fluid velocity measured relative to the new coordinate system.  Here $\hat{e}$ is simply $\hat{z}$ rotated by the angle between $\hat{k}$ and $\hat{x}$.  Notice that $f$ is now $\ell$-periodic in $x$.  We will consider this problem with the Neumann boundary conditions for $T$ and the no stress boundary conditions for $u$.  Namely,
\begin{equation}\label{boundary_conditions}
	\begin{split}
		\frac{\partial T}{\partial \eta} &= 0 \mbox{ on } \partial\Omega\\
		u\cdot \eta &= 0 \mbox{ on } \partial\Omega\\
		\omega  &= 0 \mbox{ on } \partial\Omega,
	\end{split}
\end{equation}
where $\omega = \partial_z u_1 - \partial_x u_2$ and $u = (u_1, u_2)$.

%\subsubsection*{Background: Fronts in Reaction-Diffusion Equations}

The study of front propagation in reaction-diffusion equations dates
back to the pioneering works of Kolmogorov, Petrovskii and
Piskunov~\cite{KPP} and Fisher~\cite{Fisher}.  More recently, a lot of
studies considered reaction-advection-diffusion equations in a
prescribed flow.  For an overview of many of these results, we point
the reader to~\cite{XinB} and references found within.  These results
were obtained under the assumption that the fluid flow, $u$, was
prescribed, and hence, is unaffected by the change in temperature.  On
the other hand, the Boussinesq approximation as in
\eqref{the_PDE_unrotated} accounts for the density difference by a
buoyancy force in the equation for the fluid flow.  This is the term
in the right hand side of the second line of \eqref{the_PDE_unrotated}
and \eqref{the_PDE}.  In~\cite{MX}, Malham and Xin studied the
regularity problem for this system, with the full Navier-Stokes
equation governing the advection. Over the next decade, traveling
waves were shown to exist in systems similar to that in
\eqref{the_PDE_unrotated} when $\Omega$ is a flat, non-vertical
cylinder.  First, Berestycki, Constantin, and Ryzhik showed existence
of traveling waves in two dimensions when the fluid is governed by the
full Navier-Stokes equation with no stress boundary
conditions~\cite{BCR}.  Later, this was extended to include the no
slip boundary conditions in two and three dimensional slanted
cylinders in~\cite{CLR,LM}.  Finally, traveling waves were also shown
to exist in $n$-dimensional cylinders with no slip boundary conditions
with Stokes' equation governing the fluid flow in~\cite{Lew}.  To our
knowledge, the case of more general periodic domains, unbounded in one
direction, has not been studied.

In the case of flat cylinders, one has to worry that the traveling
waves are not those constructed in the previous theory.  Namely, if
the traveling waves are planar and thus depend only on $x$, one can
show that $u \equiv 0$ and thus, that the solutions are the same as
those developed in the reaction-diffusion equation long ago.  It turns
out that the existence of planar fronts depends on the alignment of
gravity and the domain.  To be more specific, if $\hat{e}$ in
\eqref{the_PDE} is not horizontal, then non-planar fronts exist.  On
there other hand, if $\hat{e}$ is horizontal, then the existence of
non-planar fronts depends on the Rayleigh number~\cite{BKV}.  In this paper,
non-planar fronts will not be an issue.  When the boundary is not
flat, as in this paper, non-trivial planar fronts will not satisfy the
boundary conditions for $T$.

\subsubsection*{Pulsating fronts}

When the setting of the problem involves either an inhomogeneous
medium or a non-flat infinite cylinder, traveling waves can not exist.
However, a generalization of the traveling wave solution, the
pulsating front, was defined first in~\cite{Shig}.  J. Xin gave the
first rigorous proof of such a front a few years later~\cite{Xin}.
For further results on the existence of pulsating fronts in various
settings see~\cite{BH_Period}.  In that paper, the authors prove the
existence of pulsating fronts in many settings under the condition
that the advection is periodic and prescribed.  In this paper, we
define a pulsating front to be a solution to \eqref{the_PDE} such that
there is some positive constant $c>0$, called the front speed, such
that
\begin{equation}\label{pulsating_front}
	\begin{split}
		T(t + \frac{\ell}{c}, x, z) &= T(t, x - \ell, z)\\
		u(t + \frac{\ell}{c}, x, z) &= u(t, x - \ell, z),
	\end{split}
\end{equation}
and there is some constant $\theta_- \in (0,1]$ such that
\begin{equation*}
	\begin{split}
		\lim_{x\to\infty} T(t,x,z) &= 0\\
		\lim_{x\to-\infty} T(t,x,z) &= \theta_-\\
		\lim_{x\to\infty} u(t,x,z) &= 0\\
		\lim_{x\to-\infty} u(t,x,z) &= 0.
	\end{split}
\end{equation*}

\subsubsection*{The Moving frame}

Because of the conditions in \eqref{pulsating_front}, it is natural to look for functions in the moving frame.  That is, we wish to find $T^m(s,x,z)$ and a real number $c$ such that $T^m$ is periodic in $x$ and satisfies $T(t,x,z) = T^m(x - ct, x, z)$.  We make the same change of variables for the functions $u$, $\omega$, and $\Psi$ to get the function $u^m$, $\omega^m$, and $\Psi^m$, respectively.  We define the moving frame as the set
\[
	\R\times \Omega_p = \{(s,x,z) \in \R^3: (x,z) \in \Omega, 0 \leq x \leq \ell\}.
\]
The set $\Omega_p$ is the period cell of $\Omega$.  Let $P = \{(x,z)\in \Omega_p: x = 0,\ell\}$ be the periodic boundary of $\Omega_p$.  Let $B$ be the complement, $B = \partial \Omega_p \setminus P$.
\begin{figure*}
\begin{center}
	\includegraphics[scale=.3]{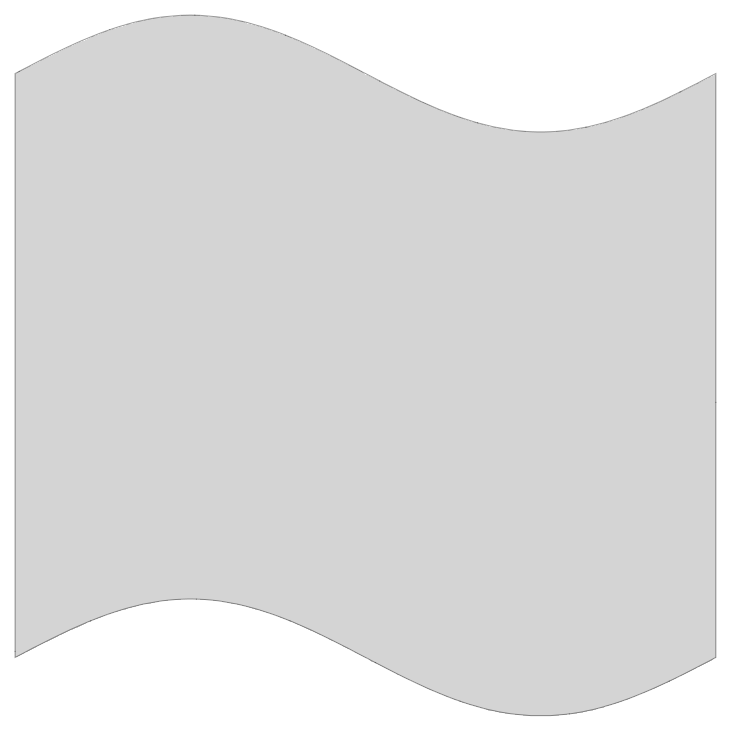}
	\caption*{The period cell $\Omega_p$}
\end{center}
\end{figure*}
This change of variables leads to the following system of equations
\begin{equation*}
	\begin{split}
		-cT^m_s + u^m \cdot \tn T^m &= L T^m + f(T^m)\\
		-cu^m_s - Lu^m + \tn p &= T^m \hat{e}\\
		\tn \cdot u^m &= 0,
	\end{split}
\end{equation*}
where
\begin{equation}\label{definition_of_operator}
	\begin{split}
		L &= \tn \cdot \tn,\\
		\tn &= (\partial_x + \partial_s, \partial_z).
	\end{split}
\end{equation}
The functions satisfy the following boundary conditions
\begin{equation*}
	\begin{split}
		\eta \cdot \tn T^m &= 0 \quad\mbox{ on } B\\
		\lim_{s\to\infty} T^m(s, x, z) &= 1 \quad\mbox{ uniformly on } \Omega_p\\
		\lim_{s\to\infty} T^m(s,x,z) &= 0 \quad\mbox{ uniformly on } \Omega_p\\
		u^m \cdot \eta &= 0 \quad\mbox{ on } B\\
		\omega^m &= 0 \quad\mbox{ on } B,
	\end{split}
\end{equation*}
and where all the function satisfy periodic boundary conditions on $P$.

We will use the following stream function formulation for $u$.  For each $t$, we let $u = \nabla^\perp \Psi = (\Psi_z, - \Psi_x)$ for $\Psi$ which solves the following system of equations:
\begin{equation}
	\begin{split}
		\Delta \Psi &= \omega \quad \mbox{ on } \Omega\\
		\Psi &= 0 \quad \mbox{ on } \partial\Omega\\
		\lim_{s\to \infty} \Psi(t,x,z) &= 0 \quad\mbox{ uniformly in } z\\
		\lim_{s\to -\infty} \Psi(t,x,z) &=0 \quad\mbox{ uniformly in } z
	\end{split}
\end{equation}
In the moving frame we represent $\Psi$ with $\Psi^m$, which solves the following equations:
\begin{equation}
	\begin{split}
		L \Psi^m &= \omega^m  \\
		\Psi^m &= 0 \quad\mbox{ on } \R\times B\\
		\lim_{s\to \infty} \Psi^m(s,x,z) &= 0 \quad\mbox{ uniformly on } \Omega_p\\
		\lim_{s\to -\infty} \Psi^m(s,x,z) &=0 \quad\mbox{ uniformly on } \Omega_p.
	\end{split}
\end{equation}
We impose periodic boundary conditions on $P$.

If we have a solution $(c,T^m,u^m)$ to the above system of equations, then $(c,\tilde{T}, \tilde{u})$ is also a solution, where $\tilde{T}^m(t,x,z) = T^m(s+s_0, x, z)$ and $\tilde{u}^m(s+s_0, x, z)$.  Hence, we also impose the extra condition that
\begin{equation}\label{normalization}
	\max_{s\geq 0, (x,z)\in \Omega_p} T^m(s,x,z) = \theta_0.
\end{equation}

\subsubsection*{The main result}

Our main result is the following theorem.

\begin{thm}\label{the_theorem}
Let the nonlinearity, $f$, be ignition type as in \eqref{ignition}.  Then there exists a pulsating front solution $(c,T, u)$ to the system \eqref{the_PDE} - \eqref{boundary_conditions}.  The solutions satisfy the following: $c>0$, $T \in C^{1+\alpha, 2+\alpha}$, $u\in C^{1+\alpha,2+\alpha}$, $f(T) \not\equiv 0$.  Moreover, there is a constant $C_{\Omega,p}>0$, which depends only on $\Omega$ and $p$ such that if $f$ satisfies
\begin{equation}\label{ignition_smallness_condition}
	f(T) \leq C_{\Omega,p}(T - \theta_0)_+^p,
\end{equation}
with $p > 2$, then the left limit is one.  In other words
\[
	\lim_{x\to-\infty} T(t,x,z) = 1.
\]
\end{thm}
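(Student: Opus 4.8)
The plan is to produce the front in the moving frame and then identify its left‑hand limit. I would work throughout with the stream‑function formulation, regarding $u^m=\tn^\perp\Psi^m$ as a nonlocal, compact functional of $T^m$ determined by the elliptic problems $L\Psi^m=\omega^m$ (with $\Psi^m=0$ on $B$) and the vorticity equation obtained by taking the curl of the momentum equation in \eqref{the_PDE}. Since the curl eliminates the pressure and forces $\omega^m$ by first derivatives of $T^m$, this decouples the fluid from the momentum equation and reduces the whole system to a single reaction–advection–diffusion equation $-cT^m_s+u^m\cdot\tn T^m=LT^m+f(T^m)$ with a drift that depends on $T^m$ only through these elliptic solves. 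First I would truncate: on the bounded cylinder $(\R\times\Omega_p)\cap\{|s|\le a\}$ I impose $T^m=0$ at the right end $s=a$ and a Neumann/fully‑burnt condition at $s=-a$, remove the translation invariance by the normalization \eqref{normalization}, and treat the speed $c$ as the free parameter selected by that normalization. Existence on the truncated cylinder then follows from a Leray–Schauder degree argument, homotoping to the zero‑advection ignition problem whose degree is known to be nonzero.

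The next step is a priori estimates uniform in $a$. The maximum principle gives $0\le T^m_a\le 1$; Schauder estimates for $T^m_a$ together with the elliptic regularity for $\omega^m_a,\Psi^m_a,u^m_a$ in the stream formulation give bounds uniform in $a$; and one must establish two‑sided speed bounds $0<c_\flat\le c_a\le c_\sharp$, the upper bound by an energy/comparison argument and the lower bound using the nondegeneracy hypothesis $f\ge C$ on $\Omega\times(r_1,r_2)$, which is exactly what forces positive speed. Local compactness then lets me pass to a subsequential limit as $a\to\infty$, producing an entire solution $(c,T^m,u^m)$ on $\R\times\Omega_p$ that is periodic in $x$, with $c>0$. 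At $s=+\infty$ the normalization, maximum principle and decay give $T^m\to0$ and $u^m\to0$; at $s=-\infty$ I extract along $s_n\to-\infty$ a limit $(\bar T,\bar u)$ solving the steady system, and a sliding/uniform‑continuity argument promotes this to existence of the full limit $\theta_-\in(\theta_0,1]$ with $u^m\to0$. Unfolding the moving frame yields the pulsating front \eqref{pulsating_front}, with the stated $C^{1+\alpha,2+\alpha}$ regularity by bootstrapping Schauder estimates and with $f(T)\not\equiv0$, since otherwise $T$ would be a passive scalar incompatible with the jump enforced by \eqref{normalization}.

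For the final assertion $\theta_-=1$ under \eqref{ignition_smallness_condition}, observe that $(\bar T,\bar u)$ is a steady solution of \eqref{the_PDE} on the period cell with the periodic, Neumann, and no‑stress conditions of \eqref{boundary_conditions}. Integrating the temperature equation $\bar u\cdot\nabla\bar T=\Delta\bar T+f(\bar T)$ over $\Omega_p$, the advection term vanishes by incompressibility together with $\bar u\cdot\eta=0$, and the diffusion term vanishes by the Neumann condition, leaving
\[
	\int_{\Omega_p} f(\bar T)\,dx\,dz=0.
\]
Because $0\le\bar T\le1$ and $f\ge0$ on $[0,1]$ by \eqref{ignition}, this forces $f(\bar T)\equiv0$, so $\bar T$ takes values only in $[0,\theta_0]\cup\{1\}$; continuity of $\bar T$ and connectedness of $\Omega_p$ then leave the dichotomy $\bar T\equiv1$ (so $\theta_-=1$) or $\bar T\le\theta_0$ everywhere. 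It therefore remains to exclude the fizzling alternative $\bar T\le\theta_0$, and this is precisely where \eqref{ignition_smallness_condition} is used. Through the elliptic estimates for $\omega^m$ and $\Psi^m$ one bounds $u^m$ in terms of $f(T^m)$, and the hypothesis $f(T^m)\le C_{\Omega,p}(T^m-\theta_0)_+^{\,p}$ with $p>2$ makes this control subcritical in the two‑dimensional Sobolev scaling, yielding a velocity small enough that $T^m$ can be compared from below with a shifted zero‑advection ignition front. That comparison keeps $T^m$ above $\theta_0$ as $s\to-\infty$, rules out $\bar T\le\theta_0$, and hence gives $\theta_-=1$.

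I expect the main obstacle to lie in two places. The first is the uniform lower bound $c_a\ge c_\flat>0$ together with the compactness needed to pass to the unbounded, fully coupled cylinder, since the nonlocal buoyancy‑driven drift obstructs the monotonicity and comparison tools available in the scalar case. The second, and the genuinely delicate point for the ``moreover'' clause, is the exclusion of fizzling: one must quantify how small the advection is in terms of $f$ and verify that the subsolution survives throughout the burnt region, where the exponent $p>2$ is exactly the threshold that makes the nonlinear velocity estimate close and the reaction's pull toward $1$ dominate the convective cooling.
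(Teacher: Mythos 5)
Your overall skeleton (truncate the cylinder, normalize by \eqref{normalization}, run a Leray--Schauder degree argument against the zero-advection problem, pass $a\to\infty$, identify the limits at $s=\pm\infty$) is the paper's skeleton. But there are two genuine gaps. The first is structural: in the moving frame the operator $L=\tn\cdot\tn$ with $\tn=(\partial_x+\partial_s,\partial_z)$ is \emph{not} elliptic --- its symbol is a rank-two quadratic form in three variables --- so the ``Schauder estimates for $T^m_a$'' and the elliptic regularity for $\omega^m_a,\Psi^m_a$ that your compactness argument rests on are simply not available for the problem as you pose it. The paper must regularize twice: it replaces $L$ by $L_\eps=L+\eps\partial_s^2$, and it mollifies the vorticity ($\tilde\omega_\delta$ in \eqref{eq_finiteprob1}) so that $u$ has enough regularity to apply gradient bounds when $\eps\to0$. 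The real work of the paper is then in removing these regularizations --- the uniform-in-$\eps$ control $c_\eps^2/\eps\geq C$ (Lemma \ref{weak_lower_bound_front_speed}), the degenerate-elliptic Schauder estimate (Lemma \ref{2d_schauder_trick}) feeding into the $\eps$-independent bound on $\|u\|_{L^\infty}$ (Lemma \ref{flow_bounds}), and the recentering/contradiction argument for the uniform lower speed bound (Proposition \ref{pos_front_speed_unregularized}), which is much more delicate than invoking $f\geq C$ on $(r_1,r_2)$ because the usual monotonicity and comparison tools are unavailable. Your sketch has no substitute for any of this.

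The second gap is in the ``moreover'' clause, and here your mechanism is not just incomplete but based on a false premise. You propose to bound $u^m$ in terms of $f(T^m)$, conclude the velocity is small under \eqref{ignition_smallness_condition}, and compare $T^m$ from below with a shifted zero-advection front. But $u$ is forced by $\nabla T$ through $-c\omega_s-L\omega=\hat{e}\cdot\tn^\perp T$, and in the fizzling scenario $\theta_-\leq\theta_0$ one still has $\int f(T)=c|\Omega_p|\theta_-$ of order one (with $c$ bounded below), hence $\|\nabla T\|_{L^2}^2=\int(T-\theta_-/2)f(T)$ and therefore $u$ are \emph{not} small; in addition, a sub/supersolution comparison is obstructed by the nonlocal dependence of $u$ on $T$. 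The paper's actual argument is a purely integral one: from the identities $\theta_-=|K|^{-1}\int f(T)$ and $\int|\nabla T|^2=\int(T-\theta_-/2)f(T)$, an $L^3$ estimate on $\nabla T$ and a Sobolev--Poincar\'e oscillation bound give $\int_\R(M(t)-m(t))^p\,dt\leq C\int|\nabla T|^2$; since $m(t)<\theta_0$ for all $t$, this yields $\int|\nabla T|^2\geq C_{\Omega,p}\int(T-\theta_0)_+^p$, and combining with \eqref{ignition_smallness_condition} and the strict inequality $T-\theta_-/2<1$ forces $(T-\theta_0)_+\equiv0$, a contradiction. In particular $C_{\Omega,p}$ is a Sobolev--Poincar\'e constant attached to the period cell, not a velocity-smallness threshold, and the exponent $p>2$ enters through the interpolation of $\|\nabla T\|_{L^p}$ between the $L^2$ and $L^3$ bounds rather than through any ``subcritical scaling'' of the advection.
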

The assumption \eqref{ignition_smallness_condition} is made for purely
technical reasons and is similar to that in
~\cite{BCR, CLR, Lew, LM}.

The general idea of the proof is to marry the approaches
from~\cite{BCR} and~\cite{BH_Period, Xin}.  However, significant
difficulties arise from having an unknown fluid flow, unlike
in~\cite{BH_Period}, and from having a non-elliptic operator after
changing variables to the moving from, unlike in~\cite{BCR}.  The
proof proceeds as follows.  First, we consider a finite domain,
$[-a,a]\times\Omega_p$, and examine a regularized version of the
problem.  The operator $L$ is not elliptic so we add a second order
term with an $\epsilon$ weight.  In addition, we smooth the vorticity
by convolution in the equation relating it to the fluid velocity.
Smoothing the vorticity is novel and does not appear in the analysis of
previous works.  It provides the necessary regularity for $u$ in order
to apply the main result from~\cite{BH_Gradient} when taking the limit
$\epsilon \to 0$.

In Section \ref{problem_in_a_finite_domain}, we obtain a priori
estimates on the finite domain so that we can apply a fixed point
theorem to find a solution to this related problem.  The main
difference between our work in this section and the analogous work
in~\cite{BCR} is that we use a regularized version of the stream
function formulation for the fluid velocity.  This allows us to get
the requisite estimates to pass to the infinite cylinder.  These
estimates are contained in Proposition \ref{finite_dom_bounds}.

In Section \ref{solutions_on_the_infinite_cylinder}, we will use these
bounds to take the limit $a\to\infty$.  In order to take the limit
$\epsilon\to 0$, we will obtain new estimates independent of
$\epsilon$.  The main difficulties in this section are the lower bound
on the front speed in Proposition \ref{pos_front_speed_unregularized}
and the upper bound on the fluid velocity.  The proof of the lower
bound, while similar in spirit to the lower bound in~\cite{BH_Period},
is  more difficult because of the lack of monotonicity results
used in that paper.  The upper bound on the fluid velocity involves
new estimates on a family of degenerate elliptic equations.

Finally, in Section \ref{solutions_of_the_unregularized_equation}, we will use the bounds from Section \ref{solutions_on_the_infinite_cylinder} along with various parabolic and elliptic regularity results to obtain new estimates on our functions.  This will allow us to conclude the proof of Theorem \ref{the_theorem} by de-convolving the vorticity and thereby obtaining a solution to \eqref{the_PDE}.

{\bf Acknowledgements:} I would like to thank Lenya Ryzhik for
suggesting the project, and Francois Hamel for helpful discussions.

%%%%%%%%%%%%%%%%%%%%%%%%%%%%%%%%%%%%%
%%%%%%%%%%%%%%%%%%%%%%%%%%%%%%%%%%%%%
%%%                               %%%
%%%    PROBLEM IN A FINITE DOMAIN %%%
%%%                               %%%
%%%%%%%%%%%%%%%%%%%%%%%%%%%%%%%%%%%%%
%%%%%%%%%%%%%%%%%%%%%%%%%%%%%%%%%%%%%

\section{The Problem in a Finite Domain}\label{problem_in_a_finite_domain}

We begin by considering this problem on the domain $R_a = [-a, a]
\times \Omega_p$ and with the elliptic operator $L_\epsilon = L +
\epsilon \partial^2_s$ where $L$ is the linear operator defined in
\eqref{definition_of_operator} and where $0<\epsilon < 1/2$.  This regularization method was first used in~\cite{Xin}.  Also, following the development in ~\cite{CLR}, in order to avoid regularity issues, we let $D_a$ be a
smooth domain such that $R_{a+2\ell} \subsetneq D_a \subsetneq
R_{a+3\ell}$.  We define the ``ends'' of $D_a$ as
\[
E = \{(s,x,z)\in \partial D_a: (x,z) \notin P, s \geq a + 2\ell \text{ or } s \leq -a - 2 \ell\}.
\]
\begin{figure}
\centering
\subfloat{\includegraphics[width=.4\textwidth]{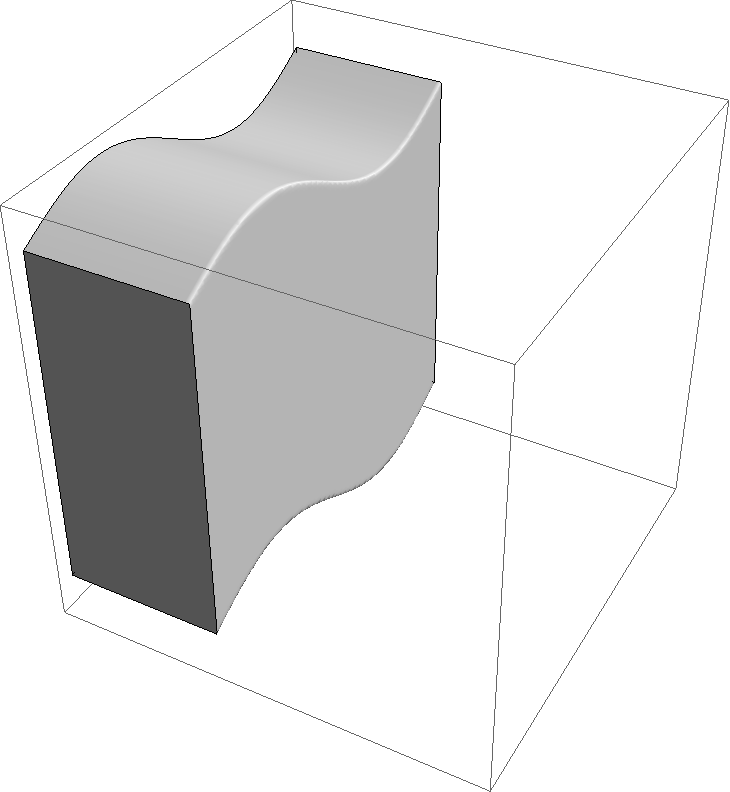}}\subfloat{\includegraphics[width=.4\textwidth]{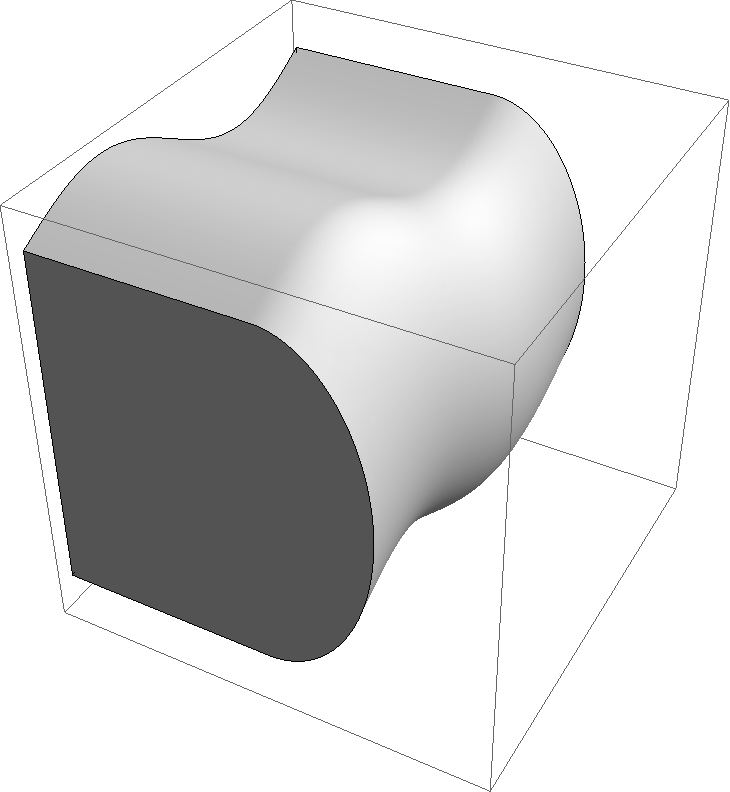}}
\caption*{The right end of the domain $R_a$.\qquad\qquad  The right end of the domain $D_a$.}
\end{figure}

We also introduce some new notation.  Let $\varphi$ be a non-negative
function in $C_c^\infty(\R^3)$ with $\|\varphi\|_{L^1(\R^3)}=1$.
Define $\varphi_\delta(s,x,z) = \delta^{-3} \varphi(s/\delta, x/\delta,
z/\delta)$.  If $g$ is a function on $D_a$ which satisfies periodic
boundary conditions on $P$ and which vanishes on the rest of the
boundary, we can extend it to all of $\R^3$ such that the $H^1$ and
$C^1$ norms increase by a factor of at most 2.  Then we define
\begin{equation*}
	\tilde{g}_\delta (s,x,z) = (f*\varphi_\delta)(s,x,z) = \frac{1}{\delta^3}\int g(s-s', x-x',z-z')\varphi(s',x',z')ds'dx'dz'.
\end{equation*}
We will use this to smooth the vorticity in the first few sections of this paper.  This introduces $\delta$-dependence which propagates to every function.  Hence, decorating functions with a $\delta$ subscript becomes ambiguous since one cannot tell if the subscript indicates that the function has been convolved or simply solves a PDE with a coefficient which depends on $\delta$.   The tilde notation is meant to clear up this ambiguity.

%\begin{equation*}\label{convolved_flow}
%	\begin{split}
%		\tilde{f}_\delta(s,x,z) &= (F * \varphi_\delta)(x-s,x,z)\\
%			&= \int F(x-s-s',x-x',z-z')\varphi_\delta(s',x',z')ds'dx'dz'\\
%			&=\int f(s+s'-x',x-x',z-z')\varphi_\delta(s',x',z')ds'dx'dz'.
%	\end{split}
%\end{equation*}
Since convolutions are
continuous as maps from Sobolev spaces to themselves and H\"older
spaces to themselves, any bounds we obtain for $g$ will carry bounds
on $\tilde{g}_\delta$.

We consider a regularized problem, which will, in the limit as
$\delta$ and $\epsilon$ tend to zero and as $a$ tends to infinity,
converge to the solution of the pulsating front problem.  For ease of notation, when no
confusion will arise, we'll suppress the dependence of the functions
on $a$, $\delta$, and $\epsilon$.  Moreover, in this section, we will wish to prove existence by making use of the Leray-Shauder degree theory.  Hence, we will add a $\tau \in [0,1]$ into our equations.  This gives us the problem below:
\begin{align} \label{eq_finiteprob1}
	\begin{split}
		-cT^m_s + u^m \cdot \tn T^m &= L_\epsilon T^m + \tau f(T^m) \mbox{ on } R_a\\
		-c\omega^m_s - L_\epsilon \omega^m &= \tau\hat{e}\cdot\tn^\perp T^m \mbox{ on } D_a\\
		L_\epsilon\Psi^m &= \tilde{\omega}^m \mbox{ on } D_a\\
		u^m &= (\Psi^m_z, -\Psi^m_s - \Psi^m_x),
	\end{split}
\end{align}
with the following boundary conditions:
\begin{equation}\label{eq_finiteprob2}
	\begin{split}
		\eta \cdot \tn T^m &= 0 \quad\mbox{ on } [-a,a]\times B\\
		T^m(-a, x, z) &= 1\\
		T^m(a,x,z) &= 0\\
		\Psi^m  &= 0 \quad\mbox{ on  $([-a-2\ell,a+2\ell]\times B)\cup E$}\\
		\omega^m &= 0 \quad\mbox{ on } ([-a-2\ell,a+2\ell]\times B)\cup E.\\
	\end{split}
\end{equation}
In addition, all functions satisfy periodic boundary conditions on boundary portion $[-a,a]\times P$.  In order to make \eqref{eq_finiteprob1} well-defined, we extend the function $T^m$ from $R_a$ to $D_a$ by reflection as in~\cite{CLR}.  Explicitly, we write
\begin{equation*}
T^m(s,x,z) =
	\left\{
		\begin{array}{ll}
			-T^m(-2a - s, x, z) + 2T^m(-a,x,z) \qquad &\mbox{ if } s < -a\\
			-T^m(2a - s) + 2T^m(a,x,z) \qquad &\mbox{ if } s > a.
		\end{array} \right.
\end{equation*}
Notice that the extension does not increase the $C^{1,\alpha}$ norm of $T^m$, up to a multiplicative factor.

We wish to show, using the Leray-Schauder fixed point theorem, that we can find
$T^m$ which is bounded uniformly in $a$ in $C^{2,\alpha}$ (but not
necessarily in $\epsilon$ or $\delta$) and which solves the first
equation in \eqref{eq_finiteprob1}, with $\omega^m$ which solves the
second equation in $C^{2,\alpha}$.  On the other hand, $\Psi^m$ will
solve the third equation weakly but will be smooth regularity on
$R_a$.

In order to find a solution we will prove the following a priori
bounds.
\begin{prop}\label{finite_dom_bounds}
  There exists a real number $a_0$ and a positive constant $C =
  C(\epsilon, \delta)$ such that if $T^m$ is a solution to problem
  \eqref{eq_finiteprob1}-\eqref{eq_finiteprob2} and if $a \geq a_0$,
  then
\begin{equation*}
|c| + \|T^m\|_{C^{2,\alpha}} + \|\Psi^m\|_{C^{3,\alpha}} + \|\omega^m\|_{C^{2,\alpha}} + \|\nabla T^m\|_{L^2} + \|\Psi^m\|_{H^{3}} + \|\omega^m\|_{H^2} < C.
\end{equation*}
Here all bounds hold on $R_a$.
\end{prop}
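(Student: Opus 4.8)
The plan is to derive the seven bounds in a fixed order, climbing from crude pointwise control of $T^m$ up to the full Schauder estimates, while leaning throughout on two structural features. The first is the $\tn$-divergence-free condition $\tn\cdot u^m=0$, which makes the advection term integrate to harmless boundary flux and so decouples the energy estimate for $T^m$ from the fluid velocity. The second is the mollification $\tilde\omega^m=\omega^m*\varphi_\delta$, which turns any low-order control of $\omega^m$ into control of $\tilde\omega^m$ in \emph{every} H\"older and Sobolev norm at the cost of a $\delta$-dependent factor; this is what breaks the otherwise circular regularity chain $T^m\to\omega^m\to\Psi^m\to u^m\to T^m$. Since $C$ is permitted to depend on $\epsilon$ and $\delta$, I may treat $L_\epsilon$ as uniformly elliptic with ellipticity of order $\epsilon$ and treat convolution as an arbitrarily smoothing operator; only independence from $a$ must be tracked.

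First I would establish $0\le T^m\le 1$ on $R_a$ by the maximum principle applied to the first equation of \eqref{eq_finiteprob1}: the advection coefficient is divergence free, and \eqref{ignition} gives $f(x,z,0)=0$ and $f(x,z,r)\le 0$ for $r\ge 1$, so the data $T^m(-a,\cdot)=1$, $T^m(a,\cdot)=0$, the Neumann condition on $B$, and periodicity on $P$ confine $T^m$ to $[0,1]$. Next I would bound $|c|$. Integrating the temperature equation over $R_a$, the advection contributes only a boundary flux that vanishes by $\tn\cdot u^m=0$ together with $u^m\cdot\eta=0$ and $\Psi^m=0$ on $B$ (so the net flux through each slice is zero), while $\int -cT^m_s\,dV=c\,|\Omega_p|$; this produces a relation $c\,|\Omega_p|=\tau\!\int f(T^m)\,dV+(\text{diffusive boundary flux})$ that bounds $c$ from above once the support of $f(T^m)$ is localized in $s$, with the matching lower bound following by testing against $T^m_s$ and using that $f(T^m)T^m_s$ is an exact $s$-derivative. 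Testing the equation against $T^m$ itself (where the advection again drops) then gives
\[
	\|\tn T^m\|_{L^2(R_a)}^2 + \epsilon\|T^m_s\|_{L^2(R_a)}^2 \le C\bigl(1+|c|\bigr),
\]
and hence, combining the two components, $\|\nabla T^m\|_{L^2(R_a)}\le C$. I expect this speed bound to be the main obstacle: it is where the normalization \eqref{normalization} must be invoked to place the front near $s=0$ and thereby keep $|\operatorname{supp} f(T^m)|$—and with it all of these integrals—bounded \emph{uniformly in} $a$. This is the analogue of the argument in~\cite{BCR}, complicated here by the $(\partial_s+\partial_x)^2$ structure of $L_\epsilon$.

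With $|c|$ and $\|\tn T^m\|_{L^2}$ in hand I would turn to the vorticity. The second equation of \eqref{eq_finiteprob1} is a linear uniformly elliptic equation for $\omega^m$ with source $\tau\,\hat{e}\cdot\tn^\perp T^m$ and homogeneous data on $([-a-2\ell,a+2\ell]\times B)\cup E$. Testing against $\omega^m$ (the transport term $-c\omega^m_s$ drops since $\omega^m=0$ on the ends) and using Poincar\'e's inequality gives $\|\omega^m\|_{H^1}\le C\|\tn T^m\|_{L^2}\le C$; since then $L_\epsilon\omega^m=-c\omega^m_s-\tau\hat{e}\cdot\tn^\perp T^m\in L^2$, elliptic regularity upgrades this to $\|\omega^m\|_{H^2(D_a)}\le C$. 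Convolving, $\|\tilde\omega^m\|_{C^{1,\alpha}}+\|\tilde\omega^m\|_{H^1}\le C(\delta)$. Feeding this into the third equation $L_\epsilon\Psi^m=\tilde\omega^m$ with $\Psi^m=0$ on the lateral and end boundaries, Schauder and $L^2$ elliptic estimates yield $\|\Psi^m\|_{C^{3,\alpha}}+\|\Psi^m\|_{H^3}\le C(\delta)$. Because $u^m=(\Psi^m_z,-\Psi^m_s-\Psi^m_x)$ is first order in $\Psi^m$, this at once controls $u^m$ in $C^{2,\alpha}$; in particular $\|u^m\|_{C^{0,\alpha}}\le C(\delta)$, exactly the regularity of the advection coefficient needed to close the estimate for $T^m$.

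Finally I would close the loop. Writing the temperature equation as $L_\epsilon T^m=-cT^m_s+u^m\cdot\tn T^m-\tau f(T^m)$ with bounded, now H\"older-continuous coefficients, De Giorgi--Nash--Moser estimates first give $T^m\in C^{0,\alpha}$ (uniformly in $a$), whereupon $f(T^m)\in C^{0,\alpha}$ and Schauder estimates yield $\|T^m\|_{C^{2,\alpha}(R_a)}\le C$. This makes $\tn^\perp T^m\in C^{1,\alpha}$, so a final bootstrap on the vorticity equation (first $W^{2,p}$, then Schauder) gives $\|\omega^m\|_{C^{2,\alpha}(R_a)}\le C$. Collecting the bound on $|c|$, the energy bound $\|\nabla T^m\|_{L^2}$, the Sobolev bounds $\|\Psi^m\|_{H^3}$ and $\|\omega^m\|_{H^2}$, and the H\"older bounds $\|T^m\|_{C^{2,\alpha}}$, $\|\Psi^m\|_{C^{3,\alpha}}$, $\|\omega^m\|_{C^{2,\alpha}}$ produces the stated inequality, with $C=C(\epsilon,\delta)$ and a threshold $a_0$ large enough that the end-to-end distance exceeds the front width, as in~\cite{CLR}. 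The two points demanding the most care are the $a$-independence of every constant—which rests on the localization of the front supplied by the normalization—and the speed bound of the second step.
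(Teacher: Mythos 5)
Your overall architecture (energy estimate for $\omega^m$ in terms of $\tn T^m$, arbitrary smoothing of $\tilde\omega^m$ via the convolution, elliptic estimates for $\Psi^m$, then Schauder bootstrap) matches the paper's, but the load-bearing step — the bound on $|c|$ — has a genuine gap, and because of it your chain of estimates is ordered in a way that cannot be closed as written. You claim that integrating the temperature equation over $R_a$ yields $c\,|\Omega_p|=\tau\int f(T^m)+(\text{diffusive boundary flux})$ and that this bounds $c$ ``once the support of $f(T^m)$ is localized in $s$'' by the normalization. But \eqref{normalization} only forces $T^m\le\theta_0$ (hence $f(T^m)=0$) on $\{s\ge 0\}$; it says nothing about the measure of the reaction zone inside $[-a,0]\times\Omega_p$, so $\int f(T^m)$ is not a priori bounded uniformly in $a$. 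Likewise the diffusive boundary flux $(1+\epsilon)\int_{\Omega_p}T^m_s(\pm a)\,dxdz$ is not controlled by anything you have at that stage; in the paper this is exactly the content of Lemma \ref{bdry_lemma}, which bounds $-I_s(\pm a)$ and the burning rate $F$ only in terms of $\|u^m\|_{L^\infty}$, $|c|$, and ultimately $\|\tn T^m\|_{L^2}$. Your subsequent step, $\|\tn T^m\|_{L^2}^2\le C(1+|c|)$, inherits the same problem: testing against $T^m$ (or $T^m-1$) leaves either $\int f(T^m)$ or the boundary term $\int_{\Omega_p}T^m_s(a)$ on the wrong side of the inequality, and neither is free.

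The paper's route is structurally different at precisely this point and is what makes the bounds close. Lemma \ref{front_speed} proves $|c|\le C(1+\|u^m\|_{L^\infty})$ — \emph{not} $|c|\le C$ — by comparison with super/subsolutions of the form $Ae^{\mp(s\pm a)}\psi_e(x,z)$ built from principal eigenfunctions, using the normalization to derive a contradiction when \eqref{supersolution_condition} or \eqref{subsolution_condition} holds. The estimates are then chained as $\|u^m\|_{L^\infty}\le C\|\omega^m\|_{H^1}\le C\|\tn T^m\|_{L^2}$ (Proposition \ref{omega_h2} and Lemma \ref{finite_domain_sobolev_stream_bounds}) and $\|\tn T^m\|_{L^2}^2\le C(1+\|u^m\|_{L^\infty})+C(1+\|\tn T^m\|_{L^2})$ (Lemmas \ref{ell_T_bd} and \ref{bdry_lemma}), which closes because the left side is quadratic and the right side linear in $\|\tn T^m\|_{L^2}$. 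To repair your argument you would need to either reproduce the eigenfunction comparison argument for $|c|$ in terms of $\|u^m\|_{L^\infty}$ and then reorder your steps so the velocity bound feeds back into the temperature estimate, or supply an independent a priori bound on the burning rate and the boundary fluxes, which the normalization alone does not provide.
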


In order to do this we will get a series of bounds which will eventually close.  We will start by proving a relationship between the front speed, $c$, and the flow velocity, $u^m$.  Then we will get a series of $L^2$ and $H^1$ bounds.  First we will show a bound of $\omega^m$ in terms of $T^m$.  Then we will get a bound of $u^m$ in terms of $\omega^m$.  Finally, we will get a bound of $T^m$ in terms of $c$.  Putting these all together we end up with an inequality where the same norm of $T^m$ shows up on both sides, though with a larger exponent on the left than on the right, effectively finishing the proposition above.

Virtually all the bounds in this section will depend on $\epsilon$ and
$\delta$ unless stated otherwise.  On the other hand, the bounds will
be independent of $a$, allowing us to take local limits as $a$ tends
to infinity.  In the following sections, we will obtain new bounds
allowing us to take limits as $\epsilon$ and then $\delta$ tend to
zero.

\subsubsection*{A bound on the front speed by fluid velocity}

We start by proving a bound on $c$.  This proof is essentially the
same as that of~\cite{BCR}.

%%%%%%%%%%%%%%%%%%%%%%%%%%
% FRONT SPEED AND L^OO BOUND OF U

\begin{lem}\label{front_speed}
  There exists $a_0>0$ and $C_0>0$ so that if $(c,T^m, u^m)$ solves
  \eqref{eq_finiteprob1}-\eqref{eq_finiteprob2} then for all $a
  \geq a_0$ and all $\epsilon\in(0,1/4)$ we have 
$$-C_0(1 +\|u^m\|_{L^\infty}) \leq c \leq C_0(1+\|u^m\|_{L^\infty}).$$
\end{lem}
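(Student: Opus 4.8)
My plan is to integrate the temperature equation---the first line of~\eqref{eq_finiteprob1}---over the slab $R_a$ and solve for $c$, following~\cite{BCR}. Since $\tn\cdot u^m=0$, the advection term is a divergence, $u^m\cdot\tn T^m=\tn\cdot(T^m u^m)$, and $L_\epsilon T^m=\tn\cdot\tn T^m+\epsilon\,\partial_s^2 T^m$ is already in divergence form, so every term can be handled with the divergence theorem. On the lateral boundary $[-a,a]\times B$ the contributions vanish: the advective flux because $u^m\cdot\eta=0$, the diffusive flux because $\eta\cdot\tn T^m=0$, while the periodic faces $[-a,a]\times P$ cancel in pairs. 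Writing $m(s)=\int_{\Omega_p}T^m(s,x,z)\,dx\,dz$ and using that $T^m$ is constant on the two end faces ($1$ at $s=-a$, $0$ at $s=a$), together with $\int_B\eta_1\,d\ell=0$ (a consequence of periodicity, where $\eta=(\eta_1,\eta_2)$ is the outward normal) to dispose of the one term coming from the curvature of $B$, I obtain
\begin{equation*}
 c\,|\Omega_p| = \int_{\Omega_p} u^m_1(-a,x,z)\,dx\,dz + (1+\epsilon)\big(m'(a)-m'(-a)\big) + \tau\int_{R_a} f(T^m),
\end{equation*}
where $u^m=(u^m_1,u^m_2)$.

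The lemma then reduces to bounding the three groups of terms on the right, each by a constant multiple of $1+\|u^m\|_{L^\infty}$. The advective term is immediate: it is at most $|\Omega_p|\,\|u^m\|_{L^\infty}$. For the boundary fluxes $m'(\pm a)$, the maximum principle (the faces $\{s=-a\}$ and $\{s=a\}$ realize the global maximum and minimum of $T^m$) gives $m'(a)\le0$ and $m'(-a)\le0$, which already lets me drop one flux for each of the two inequalities. To control the magnitudes \emph{uniformly in} $\epsilon$---the point where ellipticity of $L_\epsilon$ is unavailable---I will pass from the PDE to the one-dimensional equation satisfied by the cross-sectional mass,
\begin{equation*}
 (1+\epsilon)\,m''(s) + c\,m'(s) = J'(s) - b'(s) - \tau F(s),
\end{equation*}
obtained by integrating~\eqref{eq_finiteprob1} over each cross-section, where $F(s)=\int_{\Omega_p}f(T^m)\ge0$, $J(s)=\int_{\Omega_p}T^m u^m_1$, and $b(s)=\int_B \eta_1 T^m$. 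Its leading coefficient $1+\epsilon\in(1,5/4)$ is uniformly non-degenerate, so a barrier/comparison argument for $m$ on the two end intervals bounds $m'(\pm a)$ uniformly in $\epsilon$; the appearance of $c$ in this equation is handled exactly as in~\cite{BCR}.

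The genuine obstacle is the reaction integral $\tau\int_{R_a}f(T^m)$. It is non-negative, but a priori it is taken over a region whose extent in $s$ is comparable to $a$, so the crux is to show it is bounded independently of $a$ (and of $\epsilon$, $\delta$, $\tau$). The mechanism is that the reaction zone cannot be arbitrarily wide: on the set where $T^m\in(r_1,r_2)$ one has $f\ge C>0$, and the equation forbids $T^m$ from lingering at such values, driving it toward $1$ within a bounded distance. Making this quantitative and $a$-independent is harder here than in the flat cylinder of~\cite{BCR}: the loss of ellipticity as $\epsilon\to0$ rules out the usual interior gradient estimates, and I do not have monotonicity of $T^m$ in $s$ to organize the level sets. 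I expect to obtain the bound from a maximum-principle comparison that uses only the divergence structure of $\tn$ and the lower bound $f\ge C$ on $(r_1,r_2)$, keeping every constant uniform in $\epsilon$, $\delta$, $a$, and $\tau$. Combining the advective bound, the two end-flux bounds, and this reaction bound in the displayed identity yields $-C_0(1+\|u^m\|_{L^\infty})\le c\le C_0(1+\|u^m\|_{L^\infty})$.
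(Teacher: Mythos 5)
Your overall route (integrate the temperature equation over $R_a$ and read off $c$ from the resulting mass balance) is genuinely different from the paper's, which instead constructs exponential super- and subsolutions of the form $Ae^{-(s+a)}\psi_e(x,z)$ and $1-Ae^{s-a}\psi_e'(x,z)$ built from principal eigenfunctions of $-\Delta\pm 2\partial_x$, runs a sliding/comparison argument to locate the contact point at $s=-a$, and derives a contradiction with the normalization \eqref{normalization} when the speed is too large or too negative. The integral identity you write down is correct and does appear in the paper, but only later (in Lemma \ref{bdry_lemma} and Lemma \ref{pos_front_speed}), and there it is used in the opposite logical direction.

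The genuine gap is the term $\tau\int_{R_a}f(T^m)$, and, less visibly, the end fluxes $m'(\pm a)$. You correctly identify the reaction integral as ``the genuine obstacle,'' but you offer only the expectation that a maximum-principle comparison will bound it uniformly in $a$; no such argument is given, and the heuristic that the reaction zone has bounded width does not by itself control the integral (the paper's width argument, Lemma \ref{burning_lower_bd}, yields a \emph{lower} bound on $\int f$, not an upper one, and even that requires the normalization and an $L^2$ bound on $T^m_s$). In the paper the upper bound $\tau F\le C(1+\|\tn T^m\|_{L^2})$ is obtained in Lemma \ref{bdry_lemma} precisely \emph{by invoking} Lemma \ref{front_speed} to eliminate $|c|$ from the right-hand side; the same circularity afflicts your end-flux bounds, since solving the one-dimensional equation for $m$ explicitly gives $-m'(\pm a)\le \frac{1}{2a}+\|u^m\|_{L^\infty}+|c|+C+\tau F/2$, so both quantities you need are a priori controlled only in terms of $|c|$ and $\int f$ themselves. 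Your identity therefore ties $c$, $\int f$, and $m'(\pm a)$ together without any independent input to break the loop. The paper breaks it with the eigenfunction barrier plus the normalization condition \eqref{normalization} --- which your proposal never uses, and which is essential: it is exactly what forces the contradiction $\theta_0\le A_0e^{-a}$ (resp.\ $1-e^{-a}\le\theta_0$) for large $a$ when the speed violates the claimed bounds. To repair your argument you would need to supply an $a$-independent, $c$-independent bound on $\int_{R_a}f(T^m)$, and the only mechanism available in this setting is essentially the barrier construction you were trying to avoid.
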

\begin{proof} Let $\psi_e$ be the principal eigenfunction (normalized
to have $L^\infty$ norm 1) of the operator 
\[
-\Delta\psi_e + 2 \partial_x\psi_e=\mu_e\psi_e,~~\hbox{$\psi_e>0$ 
 }
\]
defined in $\Omega_p$, with the boundary conditions
\[
-\eta_1 \psi_e +
  \eta\cdot \nabla\psi_e = 0\hbox{ on } B,
\]
and periodic boundary conditions in $x$ (that is, at $x=0$ and
$x=\ell$).  As in~\cite{BH_Period}, Proposition~5.7, such an
eigenfunction exists and is positive on $\overline{\Omega_p}$.
Moreover, we can see by multiplying the equation by $\psi_e$ and
integrating that the principal eigenvalue  $\mu_e$ is
positive.  

Define the function 
\[
\gamma_A(s) = Ae^{-(s+a)}\psi_e(x,z).
\]
Notice that $\gamma_A$ satisfies the same boundary conditions as $T^m$
on $[-a,a]\times B$ and $[-a,a]\times P$ but not at $s=-a$ or $s=a$.  Then
\begin{equation*}
	\begin{split}
		-c(\gamma_A)_s + u^m\cdot \tn \gamma_A - L_\epsilon \gamma_A
			&= Ae^{-(s+a)}[c\psi_e - u_1^m \psi_e + u^m \cdot \nabla \psi_e - (1+\epsilon)\psi_e  - \Delta\psi_e + 2(\psi_e)_x]\\
			&= Ae^{-(s+a)}[c \psi_e - u_1^m \psi_e + u^m\cdot
                        \nabla \psi_e  
-(1+\epsilon)\psi_e + \mu_e \psi_e]\\
			&\geq Ae^{-(s+a)}[(c - \|u_1^m\|_{L^\infty} - (1+\epsilon)) \psi_e - \|u^m\|_{L^\infty} \|\nabla \psi_e\|_{L^\infty})]\\
			&\geq M A\psi_e e^{-(s+a)} = M\gamma_A\ge \tau f(\gamma_A),
	\end{split}
\end{equation*}
if
\begin{equation}\label{supersolution_condition}
c \geq M + \|u^m\|_{L^\infty}\left(1+\left\|\frac{\nabla \psi_e}{\inf \psi_e}\right\|_{L^\infty}\right) + (1+\epsilon).
\end{equation}
We defined here
\[
M=\sup_{T\in(0,1)}\frac{f(T)}{T}.
\]
Hence, $\gamma_A$ is a super solution for all $A>0$.  If we take $A$
larger than $e^{2a}/(\inf \psi_e)$, then $\gamma_A > 1 \geq T^m$
everywhere in $R_a$.  Hence, we can define 
\[
A_0 = \inf \{A: \gamma \geq T^m\hbox{ everywhere in $R_a$}\}.
\]
It follows from compactness of $R_a$ and the continuity of $T^m$ and $\gamma_A$  that there is a
point $(s_0,x_0,z_0)$ such that $T^m(s_0,x_0,z_0) =
\gamma_{A_0}(s_0,x_0,z_0)$. The strong maximum
principle implies that this cannot be an interior point or a point on the
periodic boundary of $\Omega_p$.  In addition, the Hopf maximum principle
implies that it cannot happen on the boundary $[-a,a]\times B$.  Moreover since 
$$T^m(a,x,z) = 0 < (\inf \psi_e) e^{-2a} \leq \gamma(a,x,z),$$
it cannot be that $s_0 = a$.  Hence, it must be that $s_0 = -a$, from
which it follows that $A_0 \leq (\inf \psi_e)^{-1}$.

On the other hand, the normalization condition \eqref{normalization},
implies then that $\theta_0 \leq A_0 e^{-a}$.  This clearly doesn't hold
for $a$ sufficiently large, so it must be that
\eqref{supersolution_condition} cannot hold.  In other words, we must
have that
$$
c \leq 1 + \epsilon + C\|u\|_{L^\infty} + M,
$$
as desired.

Now, we prove the lower bound.  Let $\psi_e'$ be the principal
eigenfunction of the operator 
\[
-\Delta\psi_e'-2\partial_x\psi_e'=\mu_e'\psi_e',
\]
in $\Omega_p$, with periodic
boundary conditions in $x$ and satisfying 
\[
\eta_1 \psi_e + \eta \cdot
\nabla \psi_e\hbox{ on } B.
\]  
Again, we know that $\psi_e'$ is positive, we
take it to have $L^\infty$ norm 1, and that  
$\mu_e'$ is positive.  Let 
\[
\gamma_A(s,x,z) = 1 - Ae^{s-a}\psi_e'(x,z).
\]
Arguing as before, we see that this is a subsolution if $A>0$ and
\begin{equation}\label{subsolution_condition}
	c
		\leq -\|u\|_{L^\infty}\left(1 + \left\|\frac{\nabla\psi_e'}{\psi_e'}\right\|_{L^\infty}\right) - (1 + \epsilon).
\end{equation}

Moreover, as before, if (\ref{subsolution_condition}) holds, 
we can show that we can take $A$ to be at least as small as one with $\gamma_A \leq T^m$.  Then we have that, by the normalization condition
\eqref{normalization},
$$
1 - e^{-a}
\leq \gamma_1(0,x,z) \leq T^m(0,x,z) \leq \theta_0.
$$ 
This leads to a
contradiction if $a$ is large enough.  Hence it must be that
\eqref{subsolution_condition} does not hold when $a$ is large.  This
implies that our lower bound for $c$ holds.
\end{proof}

Now we will begin collecting $L^2$ bounds on various objects.  The
goal here is to eventually close all these bounds, which leads to the
proof of Proposition \ref{finite_dom_bounds}.  

\subsubsection*{A bound on vorticity by temperature}

The first is the
bound  on the vorticity.

%%%%%
%  Omega H^2 Bound
%%%%%
\begin{prop}\label{omega_h2}
  There exists a constant $C = C(\epsilon)>0$ so that if $T^m$ and
  $\omega^m$ satisfy \eqref{eq_finiteprob1}-\eqref{eq_finiteprob2},
  then
\begin{equation*}
	\|\omega^m\|_{H^1(D_a)} \leq C \|\tn T^m\|_{L^2(R_a)}.
\end{equation*}
\end{prop}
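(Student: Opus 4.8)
The plan is a direct energy estimate for the linear vorticity equation, the second line of \eqref{eq_finiteprob1}, treating $\tau\hat e\cdot\tn^\perp T^m$ as a prescribed forcing. I would multiply the equation $-c\omega^m_s - L_\epsilon\omega^m = \tau\hat e\cdot\tn^\perp T^m$ by $\omega^m$ and integrate over $D_a$. The drift term integrates to a pure boundary contribution $\tfrac{c}{2}\int_{\partial D_a}(\omega^m)^2 n_s$, which vanishes identically: on the ends $E$ and the lateral walls $[-a-2\ell,a+2\ell]\times B$ we have $\omega^m=0$, while on the periodic portion $[-a,a]\times P$ the outward normal satisfies $n_s=0$ and, in any case, periodicity of $\omega^m$ cancels the two faces. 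Thus the first-order term drops out entirely, and, notably, no appeal to the bound on $c$ from Lemma \ref{front_speed} is required.

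Integrating the second-order term by parts and using the same boundary conditions to kill all boundary terms yields the identity
\[
\int_{D_a}|\tn\omega^m|^2 + \epsilon\int_{D_a}(\omega^m_s)^2 = \int_{D_a}\tau\,(\hat e\cdot\tn^\perp T^m)\,\omega^m.
\]
The left-hand side is exactly where the $\epsilon$-dependence of the constant is born. Written pointwise it is $(\omega^m_x+\omega^m_s)^2 + (\omega^m_z)^2 + \epsilon(\omega^m_s)^2$, and the quadratic form in $(\omega^m_s,\omega^m_x)$ has matrix $\left(\begin{smallmatrix}1+\epsilon & 1\\ 1 & 1\end{smallmatrix}\right)$, whose determinant is $\epsilon$. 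Hence the form is positive definite with smallest eigenvalue comparable to $\epsilon$, giving
\[
\int_{D_a}|\tn\omega^m|^2+\epsilon\int_{D_a}(\omega^m_s)^2 \;\geq\; c_\epsilon\int_{D_a}|\nabla\omega^m|^2,\qquad c_\epsilon\sim\epsilon,
\]
with $\nabla$ the full gradient in $(s,x,z)$. This is precisely the mechanism forcing $C=C(\epsilon)$, and it degenerates as $\epsilon\to0$ because $L$ alone fails to be elliptic.

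For the right-hand side I would use $|\hat e\cdot\tn^\perp T^m|\leq|\tn^\perp T^m|=|\tn T^m|$ (since $\tn^\perp$ is the pointwise rotation of $\tn$, it preserves length) together with Cauchy--Schwarz to bound the forcing integral by $\|\tn T^m\|_{L^2(D_a)}\|\omega^m\|_{L^2(D_a)}$. The last ingredient is a Poincar\'e inequality $\|\omega^m\|_{L^2(D_a)}\leq C\|\nabla\omega^m\|_{L^2(D_a)}$ with constant \emph{independent of} $a$: because $\omega^m$ vanishes on the walls $B$ in every cross-section, one applies Poincar\'e slice-by-slice on the bounded cross-section $\Omega_p$ and integrates in $s$, so the constant depends only on $\Omega_p$ and not on the length $2a$ of the cylinder. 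Combining the coercivity bound, the Cauchy--Schwarz estimate, and this Poincar\'e inequality, and dividing by $\|\nabla\omega^m\|_{L^2(D_a)}$, gives $\|\omega^m\|_{H^1(D_a)}\leq C(\epsilon)\|\tn T^m\|_{L^2(D_a)}$. I would then transfer the norm from $D_a$ to $R_a$ using the reflection extension of $T^m$: on the bounded collar $D_a\setminus R_a$ the gradient of the extension is controlled by that of $T^m$ on $R_a$, which converts the bound into the stated form over $R_a$.

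The main obstacle is securing coercivity with an explicit, $a$-independent grip on the constant. The operator $L_\epsilon$ is elliptic only because of the $\epsilon\partial_s^2$ regularization, so one must check that the quadratic form above remains positive definite with constant $\sim\epsilon$, and, independently, that the Poincar\'e constant does not blow up as the cylinder length tends to infinity; both are handled by exploiting the product structure of the domain and the Dirichlet condition on $B$. By comparison, the vanishing of the drift term and the reflection transfer step are routine, the latter being immediate from the extension bound already recorded in the text.
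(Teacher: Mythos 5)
Your proof is correct and follows essentially the same route as the paper: multiply the vorticity equation by $\omega^m$, integrate, kill the drift and boundary terms using the boundary conditions, and close with Cauchy--Schwarz and an $a$-independent Poincar\'e inequality on cross-sections. The only real difference is that you make explicit the coercivity computation (the $2\times 2$ quadratic form with determinant $\epsilon$) that produces the $\epsilon$-dependence of the constant, a step the paper leaves implicit in writing $C = C(\epsilon)$.
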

\begin{proof}
  We multiply the second equation in \eqref{eq_finiteprob1} by
  $\omega^m$ and integrate in all three coordinates.

\[
	-c \int_{D_a} \omega^m \omega^m_s dsdxdz - \int_{D_a} \omega^m L_\epsilon \omega^m dsdxdz
		= \tau \int_{D_a} \omega^m (\hat{e}\cdot \tn T^m)dsdxdz
\]
The first term vanishes because $\omega^m=0$ on the boundary.
Integration by parts (along with the boundary conditions for
$\omega^m$), H\"older's inequality, and an application of the
Poincar\'e inequality in the $z$ variable gives us
\begin{equation*}
	\begin{split}
	\int_{D_a} (|\tn \omega^m|^2 + \epsilon (\omega^m_s)^2)dsdxdz
		&\leq C\|\omega^m_z\|_{L^2(_{D_a})} \cdot \|\tn T^m\|_{L^2(_{D_a})} \\
		&\leq C\left(\int_{D_a} (|\tn \omega^m|^2 + \epsilon (\omega^m_s)^2 )dsdxdz\right)^{1/2}\|\tn T^m\|_{L^2(D_a)}
	\end{split}
\end{equation*}
Notice that the Poincar\'e Inequality can be applied on cross-sections with fixed $x$ and $s$ to give us that $\|\omega^m\| \leq C \|\omega^m_z\|$.  Hence $C$ doesn't depend on $a$.  Then we arrive at
\begin{equation}
\|\omega^m\|_{H^1(D_a)} \leq C \|\tn T^m\|_{L^2(R_a)},
\end{equation}
and this finishes the proof.
\end{proof}

It follows from the properties of convolutions and the change of
variables that we made that this gives us an $H^1$ bound on
$\tilde{\omega}^m$, with an added constant independent of $a$,
$\epsilon$, and $\delta$.  In addition, we get $H^k$ bounds on
$\tilde{\omega}^m$ for any $k>1$, though the constants in these will
have some $\delta$ dependence.

%%%%
%  Bounds on \Psi^m
%%%%
%%%%%%%
% L oo bound on u
%%%%%%%

\subsubsection*{A bound on velocity by vorticity}

We now bound the fluid velocity by vorticity.  To do this, we will use the fact that convolution with a smooth, compactly supported function is a bounded operator from $L^2$ to $H^k$ for any $k$.

\begin{lem}\label{finite_domain_sobolev_stream_bounds}
For any $k$, there exists a constant $C = C(\epsilon, \delta)>0$ such that if $(c,T^m, u^m)$ solves \eqref{eq_finiteprob1}-\eqref{eq_finiteprob2} then
\[
\|\Psi^m \|_{H^k(D_a)} \leq C \| \omega^m \|_{H^1(D_a)}.
\]
In addition, the Sobolev embedding theorem gives us that
\[
\|u^m\|_{C^k(D_a)} \leq C \| \omega^m \|_{H^1(D_a)}.
\]
\end{lem}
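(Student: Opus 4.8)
The plan is to treat the third equation in \eqref{eq_finiteprob1} as a uniformly elliptic boundary value problem for each fixed $\epsilon$ and $\delta$, and to exploit the smoothing of the mollification to control the right-hand side in every Sobolev norm. The key observation is that, for $\epsilon>0$, the operator $L_\epsilon = (\partial_s + \partial_x)^2 + \partial_z^2 + \epsilon\partial_s^2$ is uniformly elliptic with \emph{constant} coefficients; its principal symbol is positive definite, with smallest eigenvalue comparable to $\epsilon$. Hence $L_\epsilon\Psi^m = \tilde\omega^m$, together with the Dirichlet condition $\Psi^m=0$ on the lateral and end portions of $\partial D_a$ and periodicity on $P$, is amenable to standard $H^k$ elliptic regularity. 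The proof splits into (i) bounding $\|\tilde\omega^m\|_{H^{k-2}}$ by $\|\omega^m\|_{H^1}$, (ii) an energy estimate followed by elliptic bootstrapping to bound $\|\Psi^m\|_{H^k}$, and (iii) Sobolev embedding to pass from $\Psi^m$ to $u^m$.

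For step (i), recall that $\tilde\omega^m = \omega^m * \varphi_\delta$ is formed after extending $\omega^m$ to all of $\R^3$ with at most a doubling of its $H^1$, hence $L^2$, norm. Since differentiation falls on the mollifier, $D^\alpha\tilde\omega^m = \omega^m * D^\alpha\varphi_\delta$, so Young's inequality gives $\|D^\alpha\tilde\omega^m\|_{L^2} \leq \|\omega^m\|_{L^2}\|D^\alpha\varphi_\delta\|_{L^1}$, whence
\[
\|\tilde\omega^m\|_{H^{k-2}(D_a)} \leq C(\delta,k)\|\omega^m\|_{L^2(R_a)} \leq C(\delta,k)\|\omega^m\|_{H^1(D_a)}.
\]
This constant depends on $\delta$ and $k$ but not on $a$, since the extension constant and the $L^1$ norms of the derivatives of $\varphi_\delta$ are independent of $a$.

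For step (ii), multiplying $L_\epsilon\Psi^m = \tilde\omega^m$ by $\Psi^m$ and integrating by parts, using $\Psi^m=0$ on the Dirichlet boundary and periodicity on $P$ exactly as in the proof of Proposition \ref{omega_h2}, yields $\int_{D_a}(|\tn\Psi^m|^2 + \epsilon(\Psi^m_s)^2) = -\int_{D_a}\Psi^m\tilde\omega^m$; the Poincar\'e inequality in $z$ on cross sections then gives $\|\Psi^m\|_{H^1(D_a)} \leq C(\epsilon)\|\tilde\omega^m\|_{L^2(D_a)}$ with $C$ independent of $a$. Because $L_\epsilon$ is uniformly elliptic with constant coefficients and $D_a$ was chosen smooth precisely to avoid corner regularity issues, the interior and boundary $H^k$ estimates for the Dirichlet/periodic problem give
\[
\|\Psi^m\|_{H^k(D_a)} \leq C(\epsilon,k)\left(\|\tilde\omega^m\|_{H^{k-2}(D_a)} + \|\Psi^m\|_{L^2(D_a)}\right).
\]
Combining this with steps (i) and (ii) produces $\|\Psi^m\|_{H^k(D_a)} \leq C(\epsilon,\delta,k)\|\omega^m\|_{H^1(D_a)}$. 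Finally, since $u^m = (\Psi^m_z, -\Psi^m_s - \Psi^m_x)$, choosing $k$ large and applying the Sobolev embedding $H^{k+3}(D_a)\hookrightarrow C^{k+1}(D_a)$ valid in three dimensions yields $\|u^m\|_{C^k(D_a)} \leq C\|\Psi^m\|_{H^{k+3}(D_a)} \leq C(\epsilon,\delta)\|\omega^m\|_{H^1(D_a)}$.

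The main obstacle is ensuring that all of these constants are independent of $a$, even though $D_a$ grows without bound. This is where the geometry enters: away from the two ends, $D_a$ is a fixed, $\ell$-periodic cylinder, so the boundary estimates near $B$ and the interior estimates can be localized on balls of a fixed radius and patched with a partition of unity whose cardinality grows with $a$ but whose individual contributions are uniformly bounded, while near the ends $E$ the geometry is a single fixed piece independent of $a$. The same bounded-geometry argument gives an $a$-independent Sobolev embedding constant. One should also check that the periodic condition on $P$ does not interfere with the constant-coefficient elliptic estimates, which is immediate after unfolding the $x$-periodicity, equivalently viewing $x$ as a torus variable.
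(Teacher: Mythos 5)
Your proposal is correct and follows essentially the same route as the paper: an energy estimate plus Poincar\'e for the $L^2$ (respectively $H^1$) bound on $\Psi^m$, elliptic $H^k$ estimates made $a$-independent by localizing to unit-size pieces of the periodic cylinder, the smoothing property of convolution with $\varphi_\delta$ to control $\|\tilde\omega^m\|_{H^{k-2}}$ by $\|\omega^m\|_{L^2}$, and Sobolev embedding for the $C^k$ bound on $u^m$. Your additional remarks on the uniform ellipticity of $L_\epsilon$ (with ellipticity constant of order $\epsilon$) and on why the embedding constants are independent of $a$ are accurate elaborations of points the paper leaves implicit.
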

\begin{proof}
First we will obtain an $L^2$ bound on $\Psi^m$ by multiplying equation \eqref{eq_finiteprob1} by $\Psi^m$, integrating by parts, and using the boundary conditions of $\Psi^m$, we obtain
\[
\int_{D_a} \left[ \epsilon |\Psi^m_s|^2 + |\nabla \Psi^m|^2 \right] dxdzds
	= -\int_{D_a} \Psi^m \tilde\omega^m dxdzds.
\]
Then the using the Cauchy-Schwarz and Poincar\'e inequalities we obtain
\[
\|\Psi^m\|_{L^2}^2
	\leq C\int_{D_a} \left[ \epsilon |\Psi^m_s|^2 + |\nabla \Psi^m|^2 \right] dxdzds 
	\leq C\|\Psi^m\|_{L^2} \|\tilde\omega^m\|_{L^2},
\]
where the constant $C$ depends only on the domain $D$.  Hence, we obtain
\[
	\|\Psi^m\|_{L^2} \leq C \|\tilde\omega^m\|_{L^2}.
\]

We will now use the standard elliptic estimates to obtain greater regularity for $\Psi^m$.  In order for these estimates to be useful, we need them to be independent of $a$.  To this end, we argue as in~\cite{BCR}, Lemma 2.3.  In short, we apply the estimates on sets of the form $[x_0, x_0 + 1] \times \partial \Omega_p$ and sum these to obtain bounds independent of $a$.  For any $k\geq 2$, we've summarized this in the following inequality,
\[
\|\Psi^m\|_{H^k}
	\leq C_\epsilon \left(\|\Psi^m\|_{L^2} + \|\tilde\omega^m\|_{H^{k-2}}\right)
	\leq C_\epsilon \|\tilde\omega^m\|_{H^{k-2}}.
\]
The constant in this inequality depends only on $\epsilon$ and $k$.  To finish, we simply use that convolutions are bounded operators, as discussed above.  Hence, we obtain
\[
\|\Psi^m\|_{H^k}
	\leq C_{\epsilon,\delta} \|\omega^m\|_{H^1},
\]
where the constant depends only on $\epsilon$, $\delta$, and $k$.
\end{proof}

\subsubsection*{A bound on temperature by fluid velocity}

\begin{lem}\label{ell_T_bd}
  There exist constants $C>0$ and $a_0 > 0 $ such that if $(c, T^m,
  u^m)$ satisfy \eqref{eq_finiteprob1}-\eqref{eq_finiteprob2} with the
  normalization \eqref{normalization}, and if $a > a_0$, then
\begin{equation}\label{eq1}
	\int_{R_a} |\tn T^m|^2 dxdzds
		+ \epsilon\int_{R_a} (T^m_s)^2 dxdzds
		+ \int_{\Omega_p} T^m_s(a,x,z)dxdz
		\leq C(1 + \|u^m\|_{L^\infty(R_a)}).
\end{equation}
\end{lem}
\begin{proof}
Recall that $T^m$ satisfies the equation
\begin{align}\label{eq2}
	-cT^m_s + u^m \cdot \tn T^m - L_\epsilon T^m = \tau f(T^m)
\end{align}
with the boundary conditions
\begin{align}
	T^m(-a,x,z) = 1,~~ T^m(a, x,z) = 0,~~ 
\eta \cdot \tn T^m(s,x,z) = 0 \mbox{ on } [-a,a]\times B.
\end{align}
Now if we multiply \eqref{eq2} by $(T^m-1)$ and integrate, we obtain
\begin{gather*}
		\frac{-c|\Omega_p|}{2} + \frac{1}{2}\int_{\Omega_p} u_1^m(a,x,z)dxdz + \int_{R_a} |\tn T^m|^2dxdzds + \epsilon\int_{R_a} (T^m_s)^2dxdzds \\
		+ (1+\epsilon) \int_{\Omega_p} T^m_s(a)dxdz = \tau\int_{R_a} (T^m - 1)f(T^m)dxdzds.
\end{gather*}
Notice that the second term is bounded by
$|\Omega_p|\cdot\|u^m\|_{L^\infty}$.  Then, we move the first and
second terms from the left hand side to the right hand side.  Finally
we use Lemma \ref{front_speed} and that $(T^m-1)f(T^m) \leq 0$, which is true by the maximum principle and the definition of $f$, to
obtain
\begin{gather*}
\int_{\Omega_p} T^m_s(a,x,z) dx dz + \epsilon\int_{R_a} (T^m_s)^2dxdzds + \int_{R_a} |\tn T^m|^2 dsdxdz\\
	\leq \frac{c|\Omega_p|}{2} + C\|u^m\|_{L^\infty} \leq C(1 + \|u^m\|_{L^\infty}).
\end{gather*}
\end{proof}

We need one more lemma to close the inequalities we've accumulated so
far.  The proof is essentially the same as in \cite{BCR} with a few
extra terms.

%%%%%%
% Boundary Integral of T Lemma
%%%%%%
\begin{lem} \label{bdry_lemma}
There exists a constant $C>0$ and a constant $a_0$ such that any solution to the system \eqref{eq_finiteprob1} - \eqref{eq_finiteprob2} with the normalization \eqref{normalization}, satisfies, for $a >a_0$
\begin{align}
0 \leq - \int_{\Omega_p} T^m_s(a,x,z)dxdz \leq C\left(1 + \|\tn T^m\|_{L^2(R_a)}\right).
\end{align}
\end{lem}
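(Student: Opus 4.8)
The plan is to prove the two inequalities separately. The lower bound is immediate from the maximum principle: since $0 \leq T^m$ everywhere with equality on the right face, $T^m(a,x,z)=0$, the slice $\{s=a\}$ is a minimum of $T^m$ in the $s$-direction, so $T^m_s(a,x,z) \leq 0$ pointwise, and integrating gives $-\int_{\Omega_p} T^m_s(a,x,z)\,dxdz \geq 0$.

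For the upper bound, following~\cite{BCR}, I would extract the boundary flux by integrating the temperature equation over a slab $\Omega_p \times (s_1, a)$ and using that the reaction vanishes there. The normalization \eqref{normalization} forces $T^m \leq \theta_0$ for all $s \geq 0$, so $f(T^m)\equiv 0$ on $\{s\geq 0\}$ and the first equation in \eqref{eq_finiteprob1} reduces to $-cT^m_s + u^m\cdot\tn T^m = L_\epsilon T^m$. Two structural facts drive the computation: $L_\epsilon$ is in divergence form, $L_\epsilon T^m = \nabla\cdot(M\nabla T^m)$ for the constant symmetric matrix $M$ encoding $\tn$ together with the $\epsilon\partial_s^2$ regularization, and the advection term is a pure divergence, $u^m\cdot\tn T^m = \tn\cdot(T^m u^m)$, by incompressibility $\tn\cdot u^m = 0$. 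Integrating over $\Omega_p\times(s_1,a)$ and applying the divergence theorem, the lateral contributions on $(s_1,a)\times B$ cancel: the conormal $L_\epsilon$-flux there equals exactly $\eta\cdot\tn T^m = 0$ (the Neumann condition), and the advection flux equals $T^m(u^m\cdot\eta)=0$, which I will verify follows from $\Psi^m = 0$ on $B$. The periodic contributions on $P$ cancel, the tangential terms $\int_{\Omega_p}T^m_x\,dxdz$ vanish by periodicity in $x$, and since $T^m(a,\cdot)=0$ both the advection flux and the $T^m_x$-term on $\{s=a\}$ vanish as well. What survives is the identity
\[
(1+\epsilon)\int_{\Omega_p} T^m_s(a)\,dxdz = c\int_{\Omega_p} T^m(s_1)\,dxdz - \int_{\Omega_p} T^m(s_1)\,u_1^m(s_1)\,dxdz + (1+\epsilon)\int_{\Omega_p} T^m_s(s_1)\,dxdz.
\]

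The only term not obviously controlled on a single cross-section is the left-end flux $\int_{\Omega_p}T^m_s(s_1)\,dxdz$; this is where the ``few extra terms'' relative to~\cite{BCR} appear. My plan is to average the identity over $s_1\in(0,1)$, which is legitimate because $T^m(s_1,\cdot)\leq\theta_0$ for every such $s_1$, so $f(T^m)$ still vanishes on the whole slab. The averaged left-end flux telescopes, $\int_0^1\int_{\Omega_p}T^m_s\,dxdz\,ds_1 = \int_{\Omega_p}(T^m(1)-T^m(0))\,dxdz$, and is bounded by $\theta_0|\Omega_p|$ since $0\leq T^m\leq\theta_0$ on $\{s\geq0\}$. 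The drift term is bounded by $|c|\,\theta_0|\Omega_p| \leq C(1+\|u^m\|_{L^\infty})$ via Lemma~\ref{front_speed}, and the advection boundary term by $\theta_0\|u^m\|_{L^\infty}|\Omega_p|$. Since $-\int_{\Omega_p}T^m_s(a)\geq 0$ and $1+\epsilon>1$, this yields $-\int_{\Omega_p}T^m_s(a)\,dxdz \leq C(1+\|u^m\|_{L^\infty(R_a)})$ with $C$ independent of $a$. Finally I would convert the velocity norm into a temperature norm by chaining Proposition~\ref{omega_h2} and Lemma~\ref{finite_domain_sobolev_stream_bounds}, which give $\|u^m\|_{L^\infty} \leq C\|\omega^m\|_{H^1} \leq C\|\tn T^m\|_{L^2}$, producing the claimed bound.

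The main obstacle is the left-end flux term: unlike the flux on $\{s=a\}$ it has no sign and cannot be controlled on a single slice by the bulk $L^2$ norm, so the averaging in $s_1$ (equivalently, integrating the identity once more over a unit interval) is the essential device. A secondary point demanding care is the bookkeeping of the surface integrals---checking that the conormal $L_\epsilon$-flux on $B$ is precisely the Neumann datum $\eta\cdot\tn T^m$, and that $\Psi^m=0$ on $B$ forces $u^m\cdot\eta=0$ there---since it is exactly the vanishing of these, together with the vanishing of the $\{s=a\}$ advection flux through $T^m(a,\cdot)=0$, that keeps every surviving term bounded independently of the slab length $a$.
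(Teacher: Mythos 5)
Your argument is correct and reaches the same bound, but it takes a genuinely different route from the paper. The paper works with the cross-sectional average $I(s)=|\Omega_p|^{-1}\int_{\Omega_p}T^m\,dxdz$ over the \emph{entire} interval $[-a,a]$, solves the two-point boundary value problem $-I_{ss}=G$, $I(-a)=1$, $I(a)=0$ explicitly, and then must control the weighted integrals of $G$ --- in particular the total reaction $F=|\Omega_p|^{-1}\int f(T^m)$, which it pins down only at the end by playing three relations (for $I_s(a)$, $I_s(-a)$, and $I_s(a)-I_s(-a)$) against each other. You instead integrate only over the slab $\{s_1\le s\le a\}$ with $s_1\in(0,1)$, where the normalization \eqref{normalization} forces $T^m\le\theta_0$ and hence $f(T^m)\equiv 0$, so the reaction never enters; the price is the uncontrolled flux $\int_{\Omega_p}T^m_s(s_1)$ at the left face of the slab, which you correctly dispose of by averaging over $s_1\in(0,1)$ so that it telescopes to $\int_{\Omega_p}(T^m(1)-T^m(0))\le\theta_0|\Omega_p|$. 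Your route is shorter and avoids the bound on $F$ altogether (that bound is a by-product of the paper's proof but is not needed elsewhere); the paper's route is the one that generalizes when one cannot localize away from the reaction zone. One bookkeeping slip: the claim that $\int_{\Omega_p}T^m_x(s_1,x,z)\,dxdz$ vanishes by periodicity is false --- the divergence theorem gives $\int_{\partial\Omega_p}\eta_1 T^m\,dS$, and only the contributions from $P$ cancel, leaving $\int_B\eta_1 T^m(s_1)\,dS$; moreover the cross term $2T^m_{xs}$ in $L_\epsilon$ contributes a second copy of this surface integral. Both surviving terms are bounded in absolute value by $\theta_0|B|$, a universal constant (this is exactly the paper's term $\int_B\eta_1 T^m_s\,dS$ in \eqref{eqn_G} after integrating in $s$), so the final estimate $-\int_{\Omega_p}T^m_s(a)\le C(1+\|u^m\|_{L^\infty})\le C(1+\|\tn T^m\|_{L^2})$ is unaffected, but the identity as you displayed it is missing these terms and should be corrected.
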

\begin{proof}
Define the function
\begin{equation}\label{defn_I}
I(s) = \frac{1}{|\Omega_p|}\int_{\Omega_p} T^m(s,x,z) dxdz.
\end{equation}
Then, integrating the equation for $T^m$ in \eqref{eq_finiteprob1} in $x$ and $z$ and using the boundary conditions, gives us
\begin{equation}
	-I_{ss} = G(s),\, I(-a) = 1,\, I(a) = 0.
\end{equation}
Here $G$ is the function given by
\begin{equation}\label{eqn_G}
	G(s) = \frac{1}{(1+\epsilon)|\Omega_p|}\int_{\Omega_p} (\tau f(T^m) - u^m\cdot \tn T^m + c T^m_s) dxdz + \frac{1}{(1+\epsilon)|\Omega_p|}\int_B \eta_1 T^m_s dS,
\end{equation}
where $\eta=(\eta_1, \eta_2)$ is the unit normal to $B$.  We can solve this equation explicitly as
\begin{equation}
I(s) = -\int_{-a}^s (s-r)G(r)dr + As + B
\end{equation}
with constants
\begin{gather*}
A = -\frac{1}{2a} + \frac{1}{2a}\int_{-a}^a (a-r) G(r)dr,\\
B = \frac{1}{2} + \frac{1}{2}\int_{-a}^a (a-r)G(r)dr
\end{gather*}
which we get from the boundary conditions.  Hence, we have that 
\begin{equation}
	I_s(-a) = A, \, I_s(a) = A - \int_{-a}^a G(r)dr.
\end{equation}
In addition, the boundary conditions on $T^m$ give us that $I_s(-a) \leq 0$ and $I_s(a) \leq 0$.  We wish to get lower bounds on these quantities.  Hence, using \eqref{eqn_G}, we have that
\begin{equation}\label{eq_I_right1}
	\begin{split}
	-I_s(a)
		&= \frac{1}{2a} + \frac{1}{2a}\int_{-a}^a (a+r)G(r)dr\\
		&= \frac{1}{2a} + \frac{1}{2a(1+\epsilon)|\Omega_p|}\int_{R_a} (a+r)(\tau f(T^m) - u^m \cdot \tn T^m + cT^m_s)dxdzdr\\
		&+ \frac{1}{2a(1+\epsilon)|\Omega_p|}\int_{B}(a+r) \eta_1 T^m_s dS(x,z,r).
	\end{split}
\end{equation}
In the right side of this equation, we have three terms.  We leave the
first as is and we use the fact that $\eta_1$ is constant in $r$ to
get that the third term is bounded as
\begin{equation}\label{third_term}
	\begin{split}
	\frac{1}{2a|\Omega_p|}\int_{-a}^a \int_{B} (a+r) \eta_1 T^m_s dSdr 
		&= -\frac{1}{2a|\Omega_p|}\int_{B} \eta_1 T^m
                dS(x,z,r)
\leq \frac{|B|}{|\Omega_p|}
		\leq C.
	\end{split}
\end{equation}

The second term is a bit more stubborn.  Integrating by parts and
using the fact that $rf(T^m) \leq 0$ for all $(r,x,z) \in R_a$, gives
us
\begin{equation}\label{second_term}
\begin{split}
\frac{1}{2a|\Omega_p|}& \int_{R_a} (a+r)[\tau f(T^m) - u^m\cdot\tn T^m + cT^m_s] dxdzds \\
	&= \frac{1}{2|\Omega_p|}\int_{R_a} \tau f(T^m)dxdzds 
		+ \frac{1}{2|\Omega_p|} \int_{R_a}
                \frac{r}{a}f(T^m)dxdzds
+ \frac{1}{2a|\Omega_p|} \int_{R_a} u^m_1 T^m dxdzds\\
	&= - \frac{c}{2a|\Omega_p|}\int_{R_a} T^mdxdzds\\\
	&\leq \frac{\tau F}{2} + \|u^m\|_{L^\infty} + |c|.
\end{split}
\end{equation}
Here 
\[
F = |\Omega_p|^{-1}\int f(T^m)dxdzds.
\]
In summary, from \eqref{eq_I_right1}, \eqref{third_term}, and \eqref{second_term},we have
\begin{align}\label{eq_I_right2}
0 \leq -I_s(a) \leq \frac{1}{2a} + \|u^m\|_{L^\infty} + |c| + C + \frac{\tau F}{2}
\end{align}
where $C>0$ is a universal constant given by \eqref{third_term}.

Now we look at $I_s(-a)$.  As before, we have that
\[
	0 \leq -I_s(-a).
\]
Also, we similarly estimate $-I_s(-a)$ from above as follows.
\begin{equation}\label{eq_I_left}
	\begin{split}
	- I_s(-a) &= \frac{1}{2a} - \frac{1}{2a}\int_{-a}^a (a-r)G(r)dr \\
		&=\frac{1}{2a} - \frac{\tau}{2a(1+\epsilon)|\Omega_p|}\int_{R_a} (a-r) f(T^m)dxdzdr\\ 
		&~~+ \frac{1}{2a(1+\epsilon)|\Omega_p|}\int_{R_a} (a-r) u^m \cdot \tn T^m dxdzdr - \frac{c}{2a(1+\epsilon)|\Omega_p|}\int_{R_a} (a-r) T^m_s dxdzdr\\
		&~~- \frac{1}{2a(1+\epsilon)|\Omega_p|} \int_B (a-r)\eta_1 T^m_s dS \\
		&\leq \frac{1}{2a} - \frac{1}{(1+\epsilon)|\Omega_p|} \int_{\Omega_p} u_1^m(-a) dxdzds + \frac{1}{2a(1+\epsilon)|\Omega_p|}\int_{R_a} u^m_1 T^m dxdzds \\
		&~~- \frac{c}{2a(1+\epsilon)|\Omega_p|}\int_{R_a} T^m dxdzds - \frac{1}{2a(1+\epsilon)|\Omega_p|}\int_B \eta_1 T^m dS \\
		&\leq \frac{1}{2a} + 2\|u^m_1\|_{L^\infty} + |c| + C,
	\end{split}
\end{equation}
where $C$ here is a universal constant.  In the computation above, we only used integration by parts and that ${(a-r)f(T^m) \geq 0}$.

There is one more calculation we need to make before we can finish this proof.  Namely, we need to show that the integral of $u\cdot \tn T^m$ vanishes.  To see this, simply integrate by parts as follows
\begin{equation*}
	\int_{R_a} u\cdot \tn T^m dxdzds
		= - \int_{\Omega_p} u_1(-a, x, z) dxdz
		= - \int_{\Omega_p} \Psi^m_z(-a,x,z) dx dz
		= 0.
\end{equation*}
The first equality is a result of the boundary conditions of $T^m$ and
$u^m$.  The last equality is a result of the boundary conditions on
$\Psi^m$.  Hence, when we integrate $G$ from $-a$ to $a$, we get
\[
	\int_{-a}^a G(s)ds
		= \frac{\tau F}{(1+\epsilon)}
			- \frac{1 c}{(1+\epsilon)}
			-\frac{1}{(1+\epsilon)|\Omega_p|}\int_{B\cap\{s = -a\}} \eta_1 dxdz.
\]

To finish the proof we analyze the following three equations:
\begin{gather*}
	I_s(a) - I_s(-a) = -\int_{-a}^a G(s)ds = \frac{1}{(1+\epsilon)}\left(- \tau F + c - \frac{1}{|\Omega_p|}\int_{B\cap\{s = -a\}} \eta_1 dxdz\right)\\
	0 \leq - I_s(a) \leq \frac{1}{2a} + \|u^m\|_{L^\infty} + |c| + C + \frac{\tau F}{2}\\
	0 \leq -I_s(-a) \leq \frac{1}{2a} + 2\|u^m\|_{L^\infty} + |c| + C.
\end{gather*}
From these, we easily get that
\begin{align}\label{eqn_F}
\frac{\tau F}{1+\epsilon} \leq (|c| + C) + \left(\|u^m\|_{L^\infty} + \frac{1}{2a} + |c| + C + \frac{\tau F}{2}\right),
\end{align}
and rearranging this, along with the inequalities from Lemmas \ref{front_speed} and \ref{omega_h2}  and Corollary \ref{finite_domain_sobolev_stream_bounds}, gives us that
\begin{align}
\tau F \leq C (\|\tn T^m\|_{L^2(R_a)} + 1)
\end{align}
for $a>1$.  Hence using this, the inequalities from Lemmas \ref{front_speed} and \ref{omega_h2}, Corollary \ref{finite_domain_sobolev_stream_bounds}, and equation \eqref{eq_I_right2} finishes the proof.
\end{proof}

\subsubsection*{A uniform bound on temperature}

We can now combine the previous estimates to obtain the following proposition.

%%%%%%%
%  Closing of bounds
%%%%%%%

\begin{prop}\label{finite_domain_bound_close}
Let $(c, T^m, u^m)$ satisfy \eqref{eq_finiteprob1} - \eqref{eq_finiteprob2} with the normalization \eqref{normalization}.  Then there exists a constant $C>0$ such that
\begin{equation*}
\|\tn T^m\|_{L^2(R_a)} \leq C.
\end{equation*}
\end{prop}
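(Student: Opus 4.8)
The plan is to chain together the four estimates already in hand so that $\|\tn T^m\|_{L^2(R_a)}$ appears on both sides of a single inequality: quadratically on the left but only linearly on the right, which is precisely the superlinear-versus-sublinear structure anticipated in the remarks preceding these lemmas. First I would start from Lemma \ref{ell_T_bd}, namely
\[
	\|\tn T^m\|_{L^2(R_a)}^2 + \epsilon \|T^m_s\|_{L^2(R_a)}^2 + \int_{\Omega_p} T^m_s(a,x,z)\,dxdz \leq C\big(1 + \|u^m\|_{L^\infty(R_a)}\big),
\]
and move the boundary integral to the right-hand side. By Lemma \ref{bdry_lemma} that term is controlled as $-\int_{\Omega_p} T^m_s(a,x,z)\,dxdz \leq C(1 + \|\tn T^m\|_{L^2(R_a)})$, so after substituting and discarding the nonnegative $\epsilon\|T^m_s\|^2$ term I would obtain
\[
	\|\tn T^m\|_{L^2(R_a)}^2 \leq C\big(1 + \|u^m\|_{L^\infty(R_a)} + \|\tn T^m\|_{L^2(R_a)}\big).
\]

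Next I would eliminate the velocity term. Since $R_a \subset D_a$, Lemma \ref{finite_domain_sobolev_stream_bounds} gives $\|u^m\|_{L^\infty(R_a)} \leq \|u^m\|_{C^0(D_a)} \leq C\|\omega^m\|_{H^1(D_a)}$, and Proposition \ref{omega_h2} then yields $\|\omega^m\|_{H^1(D_a)} \leq C\|\tn T^m\|_{L^2(R_a)}$. Composing these, $\|u^m\|_{L^\infty(R_a)} \leq C\|\tn T^m\|_{L^2(R_a)}$. Feeding this back into the displayed inequality collapses everything to
\[
	\|\tn T^m\|_{L^2(R_a)}^2 \leq C\big(1 + \|\tn T^m\|_{L^2(R_a)}\big).
\]
Writing $X = \|\tn T^m\|_{L^2(R_a)}$ this is $X^2 - CX - C \leq 0$, forcing $X \leq \tfrac{1}{2}\big(C + \sqrt{C^2 + 4C}\big)$, a bound depending only on $\epsilon$, $\delta$, and the geometry. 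This is the assertion of the proposition.

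There is no genuinely hard step left, since the substantive work was done in assembling the component bounds; the quadratic term on the left dominating the linear term on the right is exactly what produces the a priori estimate. The one point demanding care is domain bookkeeping: the velocity and vorticity bounds are stated on $D_a$ while the temperature gradient is integrated over $R_a$, so I would check that each constituent constant is uniform in $a$ (which holds for $a \geq a_0$ by the way the preceding lemmas are stated), guaranteeing that the final quadratic inequality, and hence the bound on $X$, is itself independent of $a$. This independence is what will later permit passing to the limit $a \to \infty$.
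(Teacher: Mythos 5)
Your proposal is correct and follows exactly the route the paper takes: its proof is the one-line statement that the result follows by combining Lemma \ref{ell_T_bd}, Lemma \ref{bdry_lemma}, Proposition \ref{omega_h2}, and Lemma \ref{finite_domain_sobolev_stream_bounds}, which is precisely the chain you carry out, ending in the quadratic-versus-linear inequality $X^2 \leq C(1+X)$ anticipated in the paper's remarks before Proposition \ref{finite_dom_bounds}. You have simply made explicit the bookkeeping (sign of the boundary term, $R_a$ versus $D_a$, uniformity in $a$) that the paper leaves implicit.
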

\begin{proof}
This is simply given by combining Corollary \ref{finite_domain_sobolev_stream_bounds} and Lemmas \ref{ell_T_bd}, \ref{bdry_lemma}, and \ref{omega_h2}.
\end{proof}

Notice that this closes all of our bounds, giving us a uniform bound on $|c|$, $\|u\|_{L^\infty}$ and $\|\omega^m\|_{H^2}$ by using the various estimates from earlier.

\begin{rem}\label{remark_about_finite_dom_bounds} Notice that we can then use these bounds and the usual elliptic theory to give us that $T^m$ is bounded uniformly in $C^{2,\alpha}(R_a)$.  Of course, this in turn gives us that $\omega^m$ is bounded uniformly in $C^{2,\alpha}(D_a)$ as well.  Arguing as before and using Schauder estimates, we can show that $\Psi^m$ is also uniformly bounded in $C^{3,\alpha}(D_a)$.  Hence, we have proven Proposition \ref{finite_dom_bounds}.  These bounds will blow up as $\epsilon$ tends to zero.
\end{rem}

We will make use of the Leray-Schauder topological degree theory, see e.g.~\cite{N}, to prove the existence of a solution to our problem on the finite domain.  Our a priori bounds make this possible.  The proof that follows is similar to that which appears in~\cite{BCR}.

%%%%%
% SOLUTIONS EXIST!! -- FINITE DOMAIN
%%%%%
\begin{prop}
  For each $a$ sufficiently large, there exists a solution to the
  system \eqref{eq_finiteprob1}-\eqref{eq_finiteprob2}.
\end{prop}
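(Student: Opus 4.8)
The plan is to use the Leray-Schauder degree theory to prove existence, following the standard homotopy argument. The a priori bounds collected in Proposition~\ref{finite_dom_bounds} (which hold uniformly in $\tau \in [0,1]$, since the parameter $\tau$ only weakens the nonlinearity and the source terms) are exactly what is needed to make this work. The key idea is to set up a compact operator $K_\tau$ on a suitable Banach space whose fixed points are precisely the solutions of \eqref{eq_finiteprob1}-\eqref{eq_finiteprob2} at parameter value $\tau$, and then to use the homotopy invariance of the degree to connect the nonlinear problem at $\tau = 1$ to a trivial linear problem at $\tau = 0$.

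Let me sketch the main steps in order. First I would fix the function space; a natural choice is $X = C^{1,\alpha}(R_a) \times \R$, where the $C^{1,\alpha}$ factor holds the temperature $T^m$ and the real factor holds the front speed $c$, with the normalization \eqref{normalization} enforcing uniqueness of translation. Given $(T^m, c) \in X$, one reconstructs the rest of the data in a fixed order: extend $T^m$ to $D_a$ by the reflection formula, solve the linear second equation in \eqref{eq_finiteprob1} for $\omega^m$ (a well-posed elliptic problem since $L_\epsilon$ is uniformly elliptic for $\epsilon > 0$), convolve to obtain $\tilde\omega^m$, solve the third equation for $\Psi^m$, and set $u^m = (\Psi^m_z, -\Psi^m_s - \Psi^m_x)$. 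Then one solves the first equation in \eqref{eq_finiteprob1}, viewed as a linear elliptic equation for a new temperature $\bar T^m$ with the advection field $u^m$ treated as given, and reads off a new speed $\bar c$ from the normalization. This defines the map $K_\tau(T^m, c) = (\bar T^m, \bar c)$.

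Next I would verify the two properties the degree theory requires. Compactness of $K_\tau$ follows because each solve gains regularity: the elliptic solves produce $C^{2,\alpha}$ output from $C^{1,\alpha}$ input, and the embedding $C^{2,\alpha} \hookrightarrow C^{1,\alpha}$ is compact on the bounded domain $R_a$; the convolution step only improves regularity further and is continuous. Continuity in $\tau$ is immediate from the linear dependence of the equations on $\tau$. The crucial input is that, by Proposition~\ref{finite_dom_bounds}, every fixed point of $K_\tau$ for every $\tau \in [0,1]$ lies in a fixed ball $B_\rho \subset X$ of radius independent of $\tau$; hence $K_\tau$ has no fixed points on $\partial B_\rho$, and the degree $\deg(\mathrm{Id} - K_\tau, B_\rho, 0)$ is well-defined and independent of $\tau$. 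At $\tau = 0$ the nonlinearity and the buoyancy coupling vanish, so the problem reduces to a linear decoupled system whose unique solution (with the imposed boundary data $T^m(-a)=1$, $T^m(a)=0$ and $u^m \equiv 0$) can be computed explicitly and shown to have nonzero degree, typically $\deg = 1$. By homotopy invariance the degree at $\tau = 1$ is also nonzero, which guarantees a fixed point, i.e.\ a solution to \eqref{eq_finiteprob1}-\eqref{eq_finiteprob2}.

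The main obstacle I anticipate is not any single estimate but the careful bookkeeping needed to confirm that the a priori bounds of Proposition~\ref{finite_dom_bounds} are genuinely uniform in $\tau$, not merely valid at $\tau = 1$. One must check that each lemma feeding into that proposition---Lemmas~\ref{front_speed}, \ref{omega_h2}, \ref{ell_T_bd}, \ref{bdry_lemma} and Proposition~\ref{finite_domain_bound_close}---was proved with the $\tau$ factor retained, which it was, so that the supersolution and subsolution constructions in Lemma~\ref{front_speed} still force the speed bound and the normalization \eqref{normalization} still pins the profile away from the trivial states. A secondary technical point is ensuring $K_\tau$ is well-defined on all of $X$ and not just near solutions; in particular one must confirm that the normalization condition can always be realized by a translation, so that $\bar c$ is unambiguously determined. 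Once the uniform ball $B_\rho$ and the degree computation at $\tau = 0$ are in hand, the conclusion is immediate.
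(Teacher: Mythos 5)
Your overall strategy coincides with the paper's: a Leray--Schauder degree argument on $\R\times C^{1,\alpha}(R_a)$, with the speed recovered through the normalization \eqref{normalization}, compactness from elliptic gain of regularity, and homotopy invariance in $\tau$ protected by the a priori bounds of Proposition~\ref{finite_dom_bounds}. The setup, the compactness argument, and the observation that the bounds are uniform in $\tau$ are all fine.

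The gap is in the degree computation at $\tau=0$, which you dismiss as a ``linear decoupled system whose unique solution can be computed explicitly and shown to have nonzero degree, typically $\deg=1$.'' At $\tau=0$ the buoyancy and the reaction do vanish (so $u^m\equiv 0$), but the fixed-point equation is still a genuinely coupled problem in the pair $(c,Z)$: the zero set of $\id - S_0$ is $\{\,Z=T_0^c,\ \max_{s\ge 0}Z=\theta_0\,\}$, where $T_0^c$ solves $-cT_s-L_\epsilon T=0$ and hence depends on $c$. There is no explicit formula for $T_0^c$ on the curved domain, and the degree of the map $(c,Z)\mapsto(\max_{s\ge 0}Z-\theta_0,\,Z-T_0^c)$ is not evident. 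The paper needs two further homotopies: first deforming $T_0^c$ to the explicit one-dimensional profile $\varphi_c(s)=(e^{-cs}-e^{-ca})/(e^{ca}-e^{-ca})$, then freezing $c$ at the unique $c_*$ with $\varphi_{c_*}(0)=\theta_0$, after which the degree factors as $(-1)\cdot 1=-1$ (the sign coming from the monotonicity of $c\mapsto\varphi_c(0)$). Crucially, non-vanishing of these auxiliary homotopies on $\partial V$ is \emph{not} covered by Proposition~\ref{finite_dom_bounds}, since the interpolated maps $\tau\varphi_c+(1-\tau)T_0^c$ do not solve the original PDE system; one must separately rule out large $|c|$ along them, which the paper does by combining the sub/supersolution bounds on $T_0^c$ from Lemma~\ref{front_speed} with the explicit behavior of $\varphi_c(0)$ as $c\to\pm\infty$ and the normalization. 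Without this extra a priori bound and the explicit deformation, the base-point degree is not actually computed, and the existence conclusion does not follow.
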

\begin{proof}
Let $V = \{(x, T) \in \mathbb{R}\times C^{1,\alpha}(R_a): |x| + \|T\|_{C^{1,\alpha}} \leq C_0 + 1\}$, where $C_0$ is a bound to be chosen later.  Define the operator $S_\tau: \R\times C^{1,\alpha} \to \R\times C^{1,\alpha}$ as $S_\tau(c, Z) = (c + \theta_0 - \max_{s\geq 0} Z(s,x,z), T)$ where $T$ is the solution to
\begin{equation*}
	-cT_s + u \cdot \tn T = L_\epsilon T + \tau f(Z) \mbox{ on } D_a,
\end{equation*}
where $u$ solves
\begin{equation*}
	\begin{split}
		-c\omega - L_\epsilon \omega &= \tau \hat{e} \cdot \tn Z \mbox{ on } D_a\\
		L_\epsilon\Psi &= \tilde{\omega} \mbox{ on } D_a \\
		u &= (\Psi_z, \Psi_x + \Psi_s),
	\end{split}
\end{equation*}
with the boundary conditions as in \eqref{eq_finiteprob2}, and with $Z$ extended as before.  By the usual elliptic theory, $S_\tau$ is continuous and compact.  We wish to find a fixed point of map, $S_1$.  To this end, we will show that the degree of our map is non-zero.  Notice that by our previous work, we can choose $C_0$ large enough so that $\id - S_\tau$ does not vanish on the boundary of $V$.  Then the Leray-Schauder topological degree theory tells us that for all $\tau$,
\[
	\deg(\id - S_\tau, V, 0) = \deg(\id - S_0, V, 0). 
\]
Hence, we need only show that $\deg(\id - S_0, V, 0)$ is non-zero.  Notice that
\[
	\left(\id - S_0\right) (c, Z) = ( \max_{s\geq 0} Z(s,x,z) - \theta_0 , Z - T_0^c),
\]
where $T_0^c$ is the unique solution to
\[
	-c T_s - L_\epsilon T = 0,
\]
with the boundary conditions as in \eqref{eq_finiteprob2}.  Since degree is invariant under homotopy, we will find a map, homotopic to $\id - S_0$, whose degree is easier to compute.  To this end, we notice that $\id - S_0$ is $\Phi_0$ where
\[
	\Phi_\tau(c, Z) = \left(\max_{s\geq 0} Z - \theta_0 , Z - \tau\varphi_c(s) - (1-\tau) T_0^c\right),
\]
and where
\[
	\varphi_c(s) = \frac{e^{-cs} - e^{-ca}}{e^{ca} - e^{-ca}}.
\]

Before we calculate the degree of this map, we must first check that $\Phi_\tau$ does not vanish on the boundary of $V$.  This amounts to obtaining an a priori bound on $c$ independent of $\tau$, since any bound on $c$ also provides an a priori bound of $T_0^c$ by standard elliptic theory.  To this end, suppose that we have $(c,Z)$ is a zero of the map $\Phi_\tau$ for some $\tau$.  Then 
\[
(c,Z) = (c, \tau \varphi_c + (1-\tau) T_0^c), ~\text{ and }~ \max_{s\geq 0} Z = \theta_0.
\]
Choose $C$ large enough that if $c \geq C$ then $\varphi_c(0) \leq \theta_0/3$ and if $-c \geq C$ then $\varphi_c(0) \geq \frac{1+\theta_0}{2}$.  Notice that, by our work in Lemma~\ref{front_speed}, if $c \geq 2$ then $T_0^c \leq A_0 \psi_e e^{-(s+a)}$ for $A_0 = (\inf \psi_e)^{-1}$, and if $c \leq -2$ then $1 - e^{-(s+a)} \leq T_0^c$.  We need to rule out the cases when $|c|$ becomes large.  Let's first check the case when $c \leq -C$.  Here
\[
	\theta_0
	= \max_{s \geq 0} Z(s,x,z)
	\geq \tau \varphi_c(0) + (1-\tau)(1- e^{-a})
	\geq \tau \frac{1 + \theta_0}{2} + (1- \tau) (1 - e^{-a}).
\]
This leads to a contradiction if $a$ is larger than $- \log((1 - \theta_0)/2)$.  Now we check the case when $c \geq C$.  Here
\[
	\theta_0
	=\max_{s \geq 0} Z(s,x,z)
	\leq \tau \varphi_c(0) + (1-\tau) e^{-a}
	\leq \tau \frac{\theta_0}{3} + (1-\tau) A_0 e^{-a}.
\]
This leads to a contradiction if $a$ is larger than $-\log(\theta_0/(3A_0))$.  Hence, in the definition of $V$, if we choose $C_0$ to be larger than $C$, from above, and the bound in Proposition \ref{finite_dom_bounds} then $\id - S_\tau$ and $\Phi_\tau$ do not vanish on the boundary of $V$.

The map $\Phi_1$ is then homotopic to
\[
	\Phi_2(c,Z) = (\varphi_c(0) - \theta_0, Z - \varphi_c).
\]
Let $c_*$ be the unique values such that $\varphi_{c_*}(0) = \theta_0$.  Then $\Phi_2$ is homotopic to
\[
	\Phi_3(c,Z) = \left( \varphi_c(0) - \theta_0 , Z - \varphi_{c_*}\right).
\]
We can calculate the degree of this map.  Indeed, its degree is the product of the degrees of the two component functions.  The first has degree $-1$ since $\varphi_c$ is decreasing in $c$.  The last has degree one.  Hence, $\deg(\id - S_1, V, 0) = -1$, and we conclude that $S_1$ has a fixed point.  This is our desired solution.
\end{proof}

\section{Solutions on the infinite cylinder}\label{solutions_on_the_infinite_cylinder}

Since we have obtained uniform bounds, we can take weak and strong
local limits as $a\to+\infty$, along subsequences if necessary, in the relevant
topologies to get the limits $c_\epsilon$, $T^{m,\epsilon}$,
$\omega^{m,\epsilon}$, $\Psi^{m,\epsilon}$, and $u^{m,\epsilon}$ that
are defined in the infinite cylinder and satisfy the same system of
equations.
That is, $T^{m,\epsilon}$, $\omega^{m,\epsilon}$,
$\Psi^{m,\epsilon}$, and $u^{m,\epsilon}$ are defined on the domain
$\R \times \Omega_p$.
As before, we omit the $\epsilon$ notation whenever there will be no
confusion.  These functions satisfy the system
\begin{equation}\label{regularized_equation_infinite_domain}
	\begin{split}
		-cT^m_s + u^m\cdot \tn T^m &= L_\epsilon T^m + f(T^m) \mbox{ on } \R\times \Omega_p\\
		-c\omega^m_s - L_\epsilon \omega^m &= \hat{e}\cdot \tn T^m \mbox{ on } \R\times \Omega_p\\
		L_\epsilon\Psi^m &= \tilde{\omega}^m  \mbox{ on } \R\times \Omega_p\\
		u &= (\Psi^m_z, \Psi^m_s + \Psi^m_x)
	\end{split}
\end{equation}
with the boundary conditions
\begin{equation}\label{bdry_cond}
	\begin{gathered}
		\eta\cdot \tn T^m = 0,~~ \Psi^m = 0,~~ \omega^m = 0 \quad\mbox{ on } \R\times B\\
		T^m(s,0,z) = T^m(s,\ell, z),~~ \Psi^m(s,0,z) = \Psi^m(s,\ell, z),~~ \omega^m(s,0,z) = \omega^m(s,\ell, z).
	\end{gathered}
\end{equation}
Since $T_a^m$ satisfies the normalization condition
\eqref{normalization} for every $a$ and converges locally uniformly,
the limit $T^m$ must satisfy it as well.  We make one remark about
notation.  We will assume that $a_n$ is a sequence tending to infinity
along which all the relevant functions converge.  We will denote by
$T_n^m$, the function $T_{a_n}^m$, and similarly for $c_n$ and the other
functions.

In order to take the limit   $\epsilon\downarrow 0$, we need to obtain
bounds which are uniform in $\epsilon$.  Our bounds from Section
\ref{problem_in_a_finite_domain} all used the ellipticity of $L_\epsilon$ and,
hence, depend on $\epsilon$.  Our first step  is to show
that $c_\epsilon$ is positive for all $\epsilon$ (though that bound
will also depend on $\epsilon$ first).  This will allow us to prove that $T_n^m$ behaves as we would expect on the right end, namely it and its $s$-derivative vanish.  This will allow us to prove an exponential bound of $T^m$ on the right, which we will need in section 5.  Finally, we will obtain $L^2$ bounds that are uniform in $\epsilon$ on our functions and their derivatives on sets of the form $\R\times K$ where $K\subset \Omega$ is bounded.  We will use these in the next section to take limits.  Finally, we will prove that the front speed, $c_\epsilon$ is bounded away from zero by a constant that depends only on $\delta$.

In this section, some of the constants will depend on $\epsilon$.  We will denote these by a subscript $\epsilon$ since this dependence is important in this section.  Many of the constants will also depend on $\delta$, but we will suppress the notation for now.

Before we begin the proof, notice that since $u^m$ is an element of
both $C^{0,\alpha}$ and $L^2$, it must converge uniformly to $0$
as $s$ tends to infinity.  The same is true for $\Psi^m$, all of its derivatives, all the
first derivatives of $T^m$, $\omega^m$, and all of its
first and second derivatives.

%%%%%
% BURNING LOWER BOUND
%%%%%

\subsubsection*{A lower bound on the burning rate}

Here we will prove that $c_\epsilon$ is positive.  Notice, though, that our lower bound degenerates as we take $\epsilon \downarrow 0$.  We will address this issue in the following section.

\begin{lem}\label{burning_lower_bd}
There is a constant $C_\epsilon>0$ such that
\begin{equation*}
	\int_{R_{a_n}} f(T_n^m)dxdzds \geq C_\epsilon
\end{equation*}
for all $a$ sufficiently large.
\end{lem}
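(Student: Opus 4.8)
The plan is to show that the reaction must be active on a set of definite, $a$-independent measure, by locating an interior point where $T_n^m$ sits strictly inside the burning interval $(r_1,r_2)$ and then using a uniform Lipschitz bound to spread this to a small ball. Throughout, recall that the maximum principle gives $0\le T_n^m\le 1$, so by \eqref{ignition} we have $f(T_n^m)\ge 0$ everywhere on $R_{a_n}$; hence it suffices to bound $\int f(T_n^m)$ from below over any convenient subset of $R_{a_n}$. The key input is a gradient bound uniform in $a$: by Proposition~\ref{finite_dom_bounds} (see also Remark~\ref{remark_about_finite_dom_bounds}) there is a constant $K_\epsilon>0$, independent of $a$ but depending on $\epsilon$ and $\delta$, with $\|T_n^m\|_{C^{2,\alpha}(R_{a_n})}\le K_\epsilon$, so in particular $T_n^m$ is $K_\epsilon$-Lipschitz on $R_{a_n}$. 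This $\epsilon$-dependence is precisely what will force the lower bound to degenerate as $\epsilon\downarrow 0$.

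Concretely, I would fix an interior point $(x_*,z_*)$ of $\Omega_p$ with $d_*:=\mathrm{dist}((x_*,z_*),\partial\Omega_p)>0$, and set $r_0=(r_1+r_2)/2\in(r_1,r_2)$. Along the segment $s\mapsto T_n^m(s,x_*,z_*)$ the left boundary condition gives $T_n^m(-a_n,x_*,z_*)=1$, while the normalization \eqref{normalization} gives $T_n^m(0,x_*,z_*)\le\theta_0<r_0$, so by the intermediate value theorem there is a first $s_0\in(-a_n,0]$ with $T_n^m(s_0,x_*,z_*)=r_0$. The Lipschitz bound forces $s_0+a_n\ge (1-r_0)/K_\epsilon$, so $s_0$ stays a fixed distance from the left end, uniformly in $a$. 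Choosing $\rho=\min\{d_*,\ (r_2-r_1)/(4K_\epsilon),\ (1-r_0)/K_\epsilon\}$, the Euclidean ball $B_\rho(s_0,x_*,z_*)$ lies inside $R_{a_n}$ once $a_n\ge\rho$: its $(x,z)$-component stays in $\Omega_p$ since $\rho\le d_*$, its lower $s$-edge satisfies $s_0-\rho\ge -a_n$ since $\rho\le (1-r_0)/K_\epsilon$, and its upper $s$-edge satisfies $s_0+\rho\le\rho\le a_n$. On this ball the Lipschitz bound gives $|T_n^m-r_0|\le K_\epsilon\rho\le (r_2-r_1)/4$, hence $T_n^m\in(r_1,r_2)$ and so $f(T_n^m)\ge C$ by the hypothesis that $f\ge C$ on $\Omega\times(r_1,r_2)$. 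Integrating yields
\[
\int_{R_{a_n}} f(T_n^m)\,dxdzds \ \ge\ \int_{B_\rho} f(T_n^m)\,dxdzds \ \ge\ C\,|B_\rho| \ =\ \tfrac{4}{3}\pi C\rho^3 \ =:\ C_\epsilon \ >\ 0,
\]
which is independent of $a$ for $a$ large, as desired.

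The main obstacle is not any single calculation but securing the two uniform ingredients at once: the $a$-independent Lipschitz constant $K_\epsilon$ (coming from the finite-domain Schauder estimates of Section~\ref{problem_in_a_finite_domain}), and the guarantee that the burning point $s_0$ stays a fixed distance from the left boundary, so that a fixed-size ball of active reaction fits inside $R_{a_n}$. Both hinge on combining the boundary value $T_n^m(-a_n,\cdot)=1$, the normalization \eqref{normalization}, and the Lipschitz bound; and the $\epsilon$-dependence of $K_\epsilon$ is exactly what makes $C_\epsilon$ degenerate as $\epsilon\downarrow 0$, a defect to be repaired by the $\epsilon$-uniform estimates of the following subsection.
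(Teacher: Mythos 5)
Your proof is correct, but it takes a genuinely different route from the paper's. The paper never invokes the $C^{2,\alpha}$ (Lipschitz) bound; instead it proves the product inequality
\[
\Bigl(\int_{R_{a_n}} f(T^m_n)\,dxdzds\Bigr)\Bigl(\int_{R_{a_n}} |(T^m_n)_s|^2\,dxdzds\Bigr) \;\geq\; C
\]
with a \emph{universal} constant $C$, by a Chebyshev-type selection of a good line $\{(s,x_0,z_0)\}$ on which both one-dimensional integrals are controlled by their averages, then locating the crossing times $s_1,s_2$ of the levels $r_2,r_1$ and playing $C|s_2-s_1|\le\int f$ against the Cauchy--Schwarz bound $(\mathrm{const})^2/|s_2-s_1|\le\int|(T^m_n)_s|^2$; only at the last step does it divide by the $\epsilon$-dependent $L^2$ bound on $(T_n^m)_s$ from Proposition~\ref{finite_dom_bounds}. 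Your argument instead localizes the reaction to a fixed ball via the intermediate value theorem and the $a$-uniform Lipschitz constant $K_\epsilon$ from Remark~\ref{remark_about_finite_dom_bounds}, which is a perfectly valid and arguably more geometric way to get the stated conclusion. The trade-offs: your version needs the full Schauder estimate where the paper only needs the $H^1$ bound, and --- more importantly --- the paper's intermediate product inequality \eqref{reaction_lower}, whose constant is independent of $\epsilon$, is reused verbatim in Lemma~\ref{weak_lower_bound_front_speed} to prove $c_\epsilon^2/\epsilon\ge C$; your route does not produce that reusable $\epsilon$-uniform ingredient, so it proves this lemma but would not substitute for the paper's proof downstream. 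Your closing observation that the constant degenerates as $\epsilon\downarrow 0$ (here through $K_\epsilon$, in the paper through $\|(T_n^m)_s\|_{L^2}$) is accurate in both versions.
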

\begin{proof}
The proof is as in \cite{BCR, CKOR}.  We wish to show that there is
a universal constant $C>0$ such that
\begin{equation}\label{reaction_lower}
\left(\int f(T^m_n)dxdzds \right)\left( \int | (T^m_n)_s|^2dxdzds \right) \geq C.
\end{equation}
Since we have a uniform upper bound on $\| (T_n^m)_s\|_2$ given by
Proposition \ref{finite_dom_bounds}, then this inequality will give us the desired bound.

First, notice that we can find
$(x_0,z_0)\in \Omega_p$ such that
$$
\int_{-a_n}^0 |(T_n^m)_s(s,x_0,z_0)|^2 ds
	\leq \frac{3}{|\Omega_p|}\int_{R_{a_n}} | (T_n^m)_s(s,x,z)|^2dxdzds
$$
and
$$\int_{-a_n}^0 f(T_n^m(s,x_0,z_0))ds
	\leq \frac{3}{|\Omega_p|} \int_{R_{a_n}} f(T_n^m)dxdzds.
$$
Then define
$$
s_1 = \inf\{s\in(-a_n, 0): T_n^m(s,x_0,z_0) = r_2\}
$$
and
$$
s_2 = \inf\{s\in(-a_n, 0): T_n^m(s,x_0,z_0) = r_1\}.
$$
Here, $r_1$ and $r_2$ come from Definition \ref{ignition}.  Notice
that $s_2 > s_1$, and notice that both exist by the normalization
condition \eqref{normalization} and the boundary condition on $T_n^m$
at $s = -a_n$.  Then for $s_1 \leq s \leq s_2$ we have that
$f(x_0,z_0,T_n^m) > C$ for some constant $C>0$.  Hence
\begin{equation*}
C |s_2 - s_1|
	\leq \int_{s_1}^{s_2} f(x_0,z_0,T_n^m(s,x_0,z_0))ds
	\leq \frac{3}{|\Omega_p|}\int_{R_{a_n}} f(x,z, T_n^m(s,x,z))dxdzds
\end{equation*}
and
\begin{equation*}
\frac{(1-\theta_0)^2}{4|s_2-s_1|}
	\leq \int_{s_1}^{s_2} | (T_n^m)_s|^2 ds
	\leq \frac{3}{|\Omega_p|}\int_{R_{a_n}} |(T_n^m)_s|^2dxdzds.
\end{equation*}
Multiplying these two inequalities gives us the desired inequality, \eqref{reaction_lower}.
\end{proof}

%%%%%
%%%%%

\subsubsection*{Positivity of the speed and behavior of the
  temperature on the right}

We will use the bound on the burning rate to get a lower bound on the speed of the front.  This will be crucial in showing that our solutions are non-trivial up to this point.  It will also allow us to make a meaningful change of variables back to the stationary frame where we wish to ultimately show a solution exists.  The next two proofs are similar to those found in \cite{BCR}.

\begin{lem}\label{pos_front_speed}
The front speed, $c_\epsilon$, of the solution obtained above is strictly positive.
\end{lem}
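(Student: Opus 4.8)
The plan is to derive an integral identity that writes $c_\epsilon$ as the ratio of the total reaction to the total temperature deficit carried in from the left, and then to feed in the burning-rate bound of Lemma~\ref{burning_lower_bd} to force this ratio to be positive.

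First I would pass Lemma~\ref{burning_lower_bd} to the limit $a_n\to\infty$: since $f(T^m_n)\ge 0$ and $T^m_n\to T^m$ locally uniformly, Fatou's lemma gives $\int_{\R\times\Omega_p} f(T^m)\,dsdxdz \ge C_\epsilon > 0$. Next I integrate the temperature equation in \eqref{regularized_equation_infinite_domain} over the slab $[-N,N]\times\Omega_p$. The advection term is handled exactly as in Lemma~\ref{bdry_lemma}, using $\tn\cdot u^m=0$ together with $u^m\cdot\eta=0$ on $B$ and $x$-periodicity, so that it collapses to the flux $\int_{\Omega_p}[T^m u_1^m]_{s=-N}^{s=N}$. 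The operator $L_\epsilon$ integrates, by the divergence theorem and the boundary conditions $\eta\cdot\tn T^m=0$ on $B$ and periodicity on $P$, to a flux through the two ends that — after using $\int_0^\ell T^m_x\,dx=0$ — reduces to $(1+\epsilon)\int_{\Omega_p}[T^m_s]_{s=-N}^{s=N}$. What survives is the identity
\[
 c_\epsilon\int_{\Omega_p}\big(T^m(-N,x,z)-T^m(N,x,z)\big)\,dxdz = \int_{[-N,N]\times\Omega_p} f(T^m)\,dsdxdz + R(N),
\]
where $R(N)$ collects the flux terms at $s=\pm N$.

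Then I would send $N\to\infty$ along a well-chosen subsequence. At $s=+\infty$ every flux term in $R(N)$ vanishes, because $T^m$, $\nabla T^m$, and $u^m$ all decay there (as recorded in the paragraph preceding the burning-rate lemma). At $s=-\infty$ I cannot use pointwise decay, so instead I invoke the global $L^2$ integrability $T^m_s,\,u^m\in L^2(\R\times\Omega_p)$ coming from Lemma~\ref{ell_T_bd} and Proposition~\ref{finite_dom_bounds}: this forces $\int_{\Omega_p}\big(|T^m_s|^2+|u^m|^2\big)(-N,x,z)\,dxdz\to 0$ along some $N_k\to\infty$, which kills the remaining (quadratic in the small quantities) left flux terms by Cauchy--Schwarz. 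Meanwhile $\int_{\Omega_p}T^m(-N_k,x,z)\,dxdz$ stays in $[0,|\Omega_p|]$ and, along a further subsequence, converges to some $m_\infty\ge 0$. Since $T^m(N_k,\cdot)\to 0$, the identity passes to the limit as $c_\epsilon\, m_\infty = \int_{\R\times\Omega_p} f(T^m) \ge C_\epsilon > 0$. Because the right-hand side is strictly positive, $m_\infty$ cannot vanish, so $m_\infty>0$ and therefore $c_\epsilon = \big(\int f(T^m)\big)/m_\infty > 0$.

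The main obstacle is precisely the left-end flux: the $a_n\to\infty$ limit only yields local convergence, and at this stage I have not shown that $T^m$ possesses a genuine limit as $s\to-\infty$ (its limit would solve a nontrivial cross-sectional problem and need not be constant). I avoid asserting any such limit by replacing it with the subsequential-decay argument above, which uses nothing beyond the $\epsilon$-dependent global $L^2$ bounds already in hand; the boundedness $0\le T^m\le 1$ then guarantees $m_\infty$ exists along a subsequence. Everything else is the same divergence-theorem bookkeeping carried out in Lemma~\ref{bdry_lemma}, now on the infinite cylinder.
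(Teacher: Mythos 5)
Your strategy is close in spirit to the paper's, but it has one genuine gap: the step ``Since $T^m(N_k,\cdot)\to 0$.'' At this stage of the argument nothing of the sort is known. The paragraph you cite before the burning-rate lemma only records the decay of $u^m$, $\Psi^m$, $\omega^m$ and of the \emph{derivatives} of $T^m$ as $s\to+\infty$; the normalization \eqref{normalization} gives only $0\le T^m\le\theta_0$ for $s\ge 0$. The fact that $T^m(s,\cdot)\to 0$ as $s\to+\infty$ is proved only later (Corollary \ref{reg_right_limit} and Lemma \ref{reg_end_limits}), and those proofs \emph{use} the positivity of $c_\epsilon$, so invoking it here is circular. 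Without it, your identity only yields $c_\epsilon\,(m_\infty^- - m_\infty^+)=\int f(T^m)>0$, where $m_\infty^+=\lim_k\int_{\Omega_p}T^m(N_k)$ may be any number in $[0,\theta_0|\Omega_p|]$ and $m_\infty^-\in[0,|\Omega_p|]$. This shows $c_\epsilon\neq 0$ and that $c_\epsilon$ has the sign of $m_\infty^--m_\infty^+$, but a negative $c_\epsilon$ with $m_\infty^-<m_\infty^+$ is not excluded. Your left-end treatment (subsequential vanishing of the fluxes via the global $L^2$ bounds on $T^m_s$ and $u^m$, plus Fatou for the burning rate) is fine; the right end is where the argument breaks.

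The paper sidesteps exactly this difficulty by integrating over the finite domain $R_{a_n}$, where the boundary conditions give $T^m_n(-a_n,\cdot)=1$ and $T^m_n(a_n,\cdot)=0$ exactly, so no unknown right-hand limit appears. The price is that the diffusive flux $\int_{\Omega_p}(T^m_n)_s(a_n,x,z)\,dxdz$ does not vanish; it is controlled by recentering at $s=a_n$, passing to a limit $\Phi$ with $\Phi(0,\cdot)=0$ and $0\le\Phi\le\theta_0$, and using the identity \eqref{bdry_deriv_conc} to bound that flux below by $-|c_\epsilon|\,\theta_0\,|\Omega_p|$ (up to the factor $1+\epsilon$). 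This produces $c_\epsilon|\Omega_p|\ge -|c_\epsilon|(1+\epsilon)\theta_0|\Omega_p|+\int f(T^m)$, and since $(1+\epsilon)\theta_0<1$ the sign of $c_\epsilon$ follows. To repair your version you would need to supply, by some independent argument, that $\liminf_{N\to\infty}\int_{\Omega_p}T^m(N,\cdot)=0$ (or work on the finite domain as the paper does); as written the key cancellation at the right end is assumed rather than proved.
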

\begin{proof}
We first obtain an equality for use later.  Define the function
  $\Phi_n(s,x,z) = T_n^m(s+a_n,x,z)$ on the set $[-2a_n, 0]\times
  \Omega_p$.  Similarly, define $U_n(s,x,z) = u_n^m(s+a_n,x,z)$, and
  let $V$ and $W$ be the component functions of $U$, i.e. let $U =
  (V,W)$.  Using our a priori bounds for $T_n^m$ and $u_n^m$, we can
  take the limit, along a subsequence if necessary, to obtain
  functions $\Phi$ and $U$ which satisfy the equation
\begin{equation}\label{recenter_bdry}
	-c_\epsilon \Phi_s + U\cdot \tn \Phi = L_\epsilon \Phi
\end{equation}
on the set $(-\infty, 0]\times\Omega_p$.  Moreover, $\Phi(0,x,z) = 0$ for all $(x,z) \in \Omega_p$.  By our choice of translation, we get that $0 \leq \Phi\leq \theta_0$.  Now, integrating \eqref{recenter_bdry} and taking limits as $s$ tends to $-\infty$ gives us
\begin{equation}\label{bdry_deriv_conc}
	\begin{split}
		c_\epsilon \lim_{s\to-\infty}\int_{\Omega_p} \Phi(s,x,z)dxdz
			=(1+\epsilon) \int_{\Omega_p} \Phi_s(0,x,z)dxdz \leq 0.
	\end{split}
\end{equation}

Now we will proceed with the proof.  Notice that by integrating the equation for $T_n$ we arrive at the following
\begin{equation*}
	\begin{split}
		c_n |\Omega_p| - &\int_{\Omega_p} u_1^m(-a_n, x, z) dx dz\\
			&= (1+\epsilon)\int_{\Omega_p}\left[\frac{\partial T_n^m}{\partial s}(a_n,x,z) - \frac{\partial T_n^m}{\partial s}(-a_n,x,z)\right]dxdz + \int f(T_n^m) dx dz ds \\
			&\geq (1+\epsilon)\int_{\Omega_p}\frac{\partial T_n^m}{\partial s}(a_n,x,z) + \int f(T_n^m)dxdzds
	\end{split}
\end{equation*}
The inequality follows from the non-positivity of $\frac{\partial T_n^m}{\partial s}(-a_n,x,z)$, since $T_n^m(-a_n,x,z) = 1$.  First, we show that the second term on the first line is equal to zero.  To see this, notice that $u_1^m = \Psi^m_z$.  Hence we simply integrate to get
\begin{equation*}
	\int_{\Omega_p} u_1^m(-a_n,x,z)dxdz
		= \int_{\partial\Omega_p} \eta_2 \Psi^m(-a_n,x,z)dS(x,z)
		= 0
\end{equation*}
The last equality follows from the boundary conditions for $\Psi^m$.  Hence we arrive at
\begin{equation}\label{front_sp_pos_1}
	c |\Omega_p|
		\geq (1+\epsilon) \int_{\Omega_p} \frac{\partial T_n^m}{\partial s}(a_n,x,z)dxdz + \int f(T^m)dxdz.
\end{equation}
Taking the limit as $n$ tends to infinity in \eqref{front_sp_pos_1} and using the equation \eqref{bdry_deriv_conc}, we arrive at
\begin{equation*}
\begin{split}
c_\epsilon|\Omega_p|
&\geq c_\epsilon  (1 + \epsilon) \lim_{s\to-\infty}\int_{\Omega_p} \Phi(s,x,z)dxdz + \int f(T^m)dxdzds\\
&\geq -|c_\epsilon|(1+\epsilon)|\Omega_p| \theta_0 + \int f(T^m)dxdzds.
\end{split}
\end{equation*}
If $\epsilon>0$ is small enough, then $(1+\epsilon) \Phi^- \leq (1+\epsilon)\theta_0 < 1$ and hence, using Lemma \ref{burning_lower_bd} we arrive at
\begin{equation}
c_\epsilon \geq \frac{C_\epsilon}{|\Omega_p|(1 - (1+\epsilon)\theta_0)} > 0.
\end{equation}
This completes the proof of the lemma.
\end{proof}

%%%%%
%%%%%

\begin{lem}\label{bdry_deriv_dies}
There exists a sequence $a_n \to \infty$ such that
\begin{equation*}
	\lim_{n\to\infty}\left| \frac{\partial T_n^m}{\partial s}(a_n, x, z)\right| = 0
\end{equation*}
uniformly in $x$ and $z$.  Moreover, we have
\[
\lim_{n\to\infty} T_n(a_n - s_0, x, z) = 0
\]
for all $s_0\in \R$.
\end{lem}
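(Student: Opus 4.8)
The plan is to study the limit $\Phi$ of the right-end translates $\Phi_n(s,x,z) = T_n^m(s+a_n,x,z)$ introduced in the proof of Lemma \ref{pos_front_speed} and to show that $\Phi \equiv 0$ together with $\Phi_s(0,\cdot) \equiv 0$; both conclusions of the lemma are then immediate from the local $C^2$ convergence $\Phi_n \to \Phi$ up to the boundary slice $\{s=0\}$. I take the sequence $a_n$ of the statement to be the (relabeled) subsequence along which this convergence holds.

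First I recall what is already known about $\Phi$. Passing to a subsequence of the $a_n$, the uniform $C^{2,\alpha}$ bounds from Remark \ref{remark_about_finite_dom_bounds} give convergence $\Phi_n \to \Phi$ and $U_n \to U$ in $C^2_{loc}((-\infty,0]\times\overline{\Omega_p})$, up to and including $s=0$. The normalization \eqref{normalization} forces $0 \leq \Phi \leq \theta_0$, so $f(\Phi)\equiv 0$ and $\Phi$ solves the reaction-free equation \eqref{recenter_bdry}, namely $-c_\epsilon\Phi_s + U\cdot\tn\Phi = L_\epsilon\Phi$, on $(-\infty,0]\times\Omega_p$ with the Neumann/periodic lateral conditions and $\Phi(0,x,z)=0$. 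The key structural point is that $\mathcal{L}\Phi := L_\epsilon\Phi + c_\epsilon\Phi_s - U\cdot\tn\Phi = 0$ is uniformly elliptic in $(s,x,z)$, has bounded coefficients, and has no zeroth-order term.

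The heart of the argument is to pin down the boundary derivative $\Phi_s(0,\cdot)$. Since $\Phi \geq 0$ and $\Phi(0,x,z)=0$, the derivative in $s$ satisfies $\Phi_s(0,x,z) \leq 0$ pointwise. On the other hand, the integrated identity \eqref{bdry_deriv_conc} reads $c_\epsilon \lim_{s\to-\infty}\int_{\Omega_p}\Phi\,dxdz = (1+\epsilon)\int_{\Omega_p}\Phi_s(0,x,z)\,dxdz$; its left-hand side is nonnegative because $c_\epsilon > 0$ (Lemma \ref{pos_front_speed}) and $\Phi \geq 0$, while its right-hand side is nonpositive. Hence both vanish, so $\int_{\Omega_p}\Phi_s(0,\cdot)\,dxdz = 0$, and combined with the pointwise sign this forces $\Phi_s(0,x,z) \equiv 0$.

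With $\Phi(0,\cdot) \equiv 0$ and $\Phi_s(0,\cdot)\equiv 0$ in hand, I invoke the strong maximum principle and the Hopf lemma for $\mathcal{L}$. If $\Phi$ were positive somewhere in the interior, then at a point $(0,x_0,z_0)$ of the flat boundary slice with $(x_0,z_0)$ interior to $\Omega_p$ (so the interior ball condition holds), the Hopf lemma would give $\Phi_s(0,x_0,z_0) < 0$, contradicting $\Phi_s(0,\cdot)\equiv 0$. Therefore $\Phi \equiv 0$ on $(-\infty,0]\times\Omega_p$. Finally, the $C^2$ convergence up to $\{s=0\}$ yields $\partial_s T_n^m(a_n,x,z) = \partial_s\Phi_n(0,x,z) \to \Phi_s(0,x,z)=0$ uniformly in $(x,z)$, which is the first claim, and $T_n(a_n - s_0,x,z) = \Phi_n(-s_0,x,z) \to \Phi(-s_0,x,z) = 0$ for each $s_0$, which is the second. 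I expect the main obstacle to be the careful bookkeeping of signs that lets the one-sided information $\Phi_s(0)\leq 0$ combine with the global identity \eqref{bdry_deriv_conc} and the positivity of $c_\epsilon$ to trivialize the Cauchy data, after which the maximum principle does the rest.
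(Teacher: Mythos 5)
Your proposal is correct, and it shares the paper's skeleton: both arguments pass to the translated limit $\Phi$ of $\Phi_n(s,x,z)=T_n^m(s+a_n,x,z)$, both exploit the identity \eqref{bdry_deriv_conc} together with the positivity of $c_\epsilon$ from Lemma \ref{pos_front_speed}, and both conclude by showing $\Phi\equiv 0$ and reading off the two limits from local convergence up to the slice $\{s=0\}$. Where you diverge is in how $\Phi\equiv 0$ is extracted. The paper first runs a two-sequence argument (integrating \eqref{recenter_bdry} over $[s_k,s_k']\times\Omega_p$ and using that $U$ and $\nabla_{x,z}\Phi$ vanish at $-\infty$) to show $\Phi$ tends to a constant $\Phi^-$ on the left, then deduces $\Phi^-=0$ from the integral identity, and finally invokes the maximum principle with zero data at both ends. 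You instead read the sign dichotomy in \eqref{bdry_deriv_conc} (left side $\geq 0$ since $c_\epsilon>0$ and $\Phi\geq 0$, right side $\leq 0$ since $\Phi\geq 0=\Phi(0,\cdot)$) as forcing $\int_{\Omega_p}\Phi_s(0,\cdot)\,dxdz=0$, hence $\Phi_s(0,\cdot)\equiv 0$ pointwise, and then apply the strong minimum principle plus the Hopf lemma at the flat boundary $\{s=0\}$ to kill $\Phi$ outright. Your route buys you the ability to skip the identification of the left limit entirely (the Cauchy data at $s=0$ is trivialized and Hopf does the rest), at the cost of needing the Hopf lemma for the full operator $L_\epsilon + c_\epsilon\partial_s - U\cdot\tn$, which is legitimate here since for fixed $\epsilon>0$ the principal symbol $(\xi_x+\xi_s)^2+\epsilon\xi_s^2+\xi_z^2$ is uniformly elliptic, the first-order coefficients are bounded, and there is no zeroth-order term. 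The paper's route is marginally more robust in that the two-sequence argument is reused almost verbatim in Lemmas \ref{flow_tends_to_zero} and \ref{reg_end_limits}, where no boundary slice with homogeneous Dirichlet data is available; but as a proof of this particular lemma, yours is complete and slightly shorter.
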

\begin{proof}
Recall the function $\Phi$ from Lemma \ref{pos_front_speed}.  We will show that $\Phi(s,x,z)$ converges to a constant as $s\to
-\infty$.  To see this, assume we have two sequences $(s_k,
x_k, z_k)$ and $(s_k',x_k',z_k')$ such that $s_k < s_k' < s_{k+1}$,
$s_k$ and $s_k'$ tend to $-\infty$, and $T^m(s_k,x_k,z_k)$ converges
to $\theta$ and $T^m(s_k',x_k',z_k')$ converges to $\theta'$.  Then we
integrate \eqref{recenter_bdry} over $[s_k,s_k']\times \Omega_p$ to
obtain
\begin{gather*}
c_\epsilon\int_{\Omega_p} (\Phi(s_k',x,z) - \Phi(s_k,x,z))dxdz
	+ \int_{\Omega_p}(V(s_k',x,z)\Phi(s_k',x,z) - V(s_k,x,z)\Phi(s_k,x,z))dxdx\\
	= (1+\epsilon)\int_{\Omega_p} (\Phi_s(s_k',x,z) - \Phi_s(s_k,x,z))dxdz
\end{gather*}
Notice that both $U$ and $\nabla_{x,z} \Phi$ tend uniformly to zero as $s\to-\infty$ since both are uniformly bounded in $L^2$ and in $C^{0,\alpha}$.  Hence taking the limit as $k$ tends to infinity, we obtain
\begin{gather*}
c_\epsilon|\Omega_p|(\theta' - \theta) = 0.
\end{gather*}
Hence, since $c_\epsilon > 0$, it follows that $\Phi$ converges to a constant on the left, call it $\Phi^-$.  Integrating equation \eqref{recenter_bdry} gives us that
\[
	c_\epsilon |\Omega_p| \Phi^- = 0
\]
Hence $\Phi^-=0$.  Then the maximum principle assures us that $\Phi \equiv 0$, finishing the proof.
\end{proof}

\subsubsection*{The fluid velocity on the right}

Here we will show that the fluid velocity on the finite cylinder, $u_n^m$, converges to zero uniformly in $n$ as $s \to \infty$.  This is necessary for proving that the temperature, $T_n^m$, has the same behavior.

%%%%%%
%%%%%%
\begin{lem}\label{flow_tends_to_zero}
For each $\mu, \delta, \epsilon>0$, there exists $R_\epsilon = R(\mu,\delta,\epsilon)<\infty$ such that, for all $n$ and for all $s \geq R_\epsilon$, we have that $|u_n^m(s,x,z)| \leq \mu$ for all $(s,x,z) \in [R_\epsilon, a_n]\times\Omega_p$.
\end{lem}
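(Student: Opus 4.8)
The plan is to argue by contradiction and compactness, recentering the finite-domain solutions far to the right. A direct supersolution for $u_n^m$ seems hard to produce: the natural exponential barriers $Ae^{-\beta s}\psi$ for $T^m$ require the drift $c$ to dominate $\|u^m\|_{L^\infty}$, which is not available, since Lemma~\ref{front_speed} bounds $c$ in terms of $\|u^m\|_{L^\infty}$ and not conversely. Instead I would exploit the uniform-in-$n$ regularity from Proposition~\ref{finite_dom_bounds} and Remark~\ref{remark_about_finite_dom_bounds} together with the positivity of the speed from Lemma~\ref{pos_front_speed}. So, suppose the conclusion fails for some $\mu_0>0$: then there are sequences $n_k$ and $s_k\in[0,a_{n_k}]$ with $s_k\to\infty$ and points of $\Omega_p$ at which $|u_{n_k}^m(s_k,\cdot)|\geq\mu_0$. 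Set $d_k=a_{n_k}-s_k$ and split into two cases according to whether $d_k$ stays bounded.

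\emph{Case 1: $d_k\to\infty$.} Recenter via $\hat T_k(s,\cdot)=T_{n_k}^m(s_k+s,\cdot)$, and likewise for $\hat u_k,\hat\omega_k,\hat\Psi_k$. The uniform $C^{2,\alpha}$ and $C^{3,\alpha}$ bounds give $C^2_{loc}$ (resp.\ $C^3_{loc}$) limits $(\hat T_\infty,\hat u_\infty,\hat\omega_\infty,\hat\Psi_\infty)$ solving the infinite-cylinder system \eqref{regularized_equation_infinite_domain} on all of $\R\times\Omega_p$. Since $s_k\to\infty$ and $T_{n_k}^m\leq\theta_0$ for $s\geq0$ by \eqref{normalization}, we get $0\leq\hat T_\infty\leq\theta_0$, so $f(\hat T_\infty)\equiv0$ and the limiting $\hat T$-equation is source-free. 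From $\int_{R_{a_{n_k}}}|\tn T_{n_k}^m|^2\leq C$ (Proposition~\ref{finite_domain_bound_close}) and the analogous $H^1$, $H^2$ bounds for $\omega^m,\Psi^m$, the limits have finite energy over the whole cylinder, so $\tn\hat T_\infty,\hat u_\infty,\hat\omega_\infty,\hat\Psi_\infty$ tend uniformly to $0$ as $s\to\pm\infty$. Arguing as in Lemma~\ref{bdry_deriv_dies}, $\hat T_\infty$ converges to constants $\theta^\pm$ at $\pm\infty$; integrating the $\hat T$-equation over slabs exhausting $\R\times\Omega_p$ (the advection term and the boundary contributions vanish in the limit) yields $c_\epsilon(\theta^+-\theta^-)=0$, so $\theta^+=\theta^-$ because $c_\epsilon>0$ by Lemma~\ref{pos_front_speed}. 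Multiplying the source-free $\hat T$-equation by $\hat T_\infty-\theta^+$ and integrating then forces $\tn\hat T_\infty\equiv0$, so $\hat T_\infty$ is constant; hence the vorticity source $\hat e\cdot\tn\hat T_\infty$ vanishes, and the same energy identity applied to $\hat\omega_\infty$ (which vanishes on $B$ and decays) gives $\hat\omega_\infty\equiv0$, whence $L_\epsilon\hat\Psi_\infty=\widetilde{\hat\omega}_\infty=0$ with $\hat\Psi_\infty=0$ on $B$, so $\hat\Psi_\infty\equiv0$ and $\hat u_\infty\equiv0$. This contradicts $|\hat u_\infty|\geq\mu_0$ at the limit point.

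\emph{Case 2: $d_k$ bounded, say $d_k\to d_\infty$.} Now recenter at the right endpoint, $\check T_k(s,\cdot)=T_{n_k}^m(a_{n_k}+s,\cdot)$, etc. By Lemma~\ref{bdry_deriv_dies}, $T_{n_k}^m(a_{n_k}-s_0,\cdot)\to0$ for every $s_0$, so the limit $\check T_\infty\equiv0$ on $(-\infty,0]$; its gradient vanishes, the vorticity source vanishes, and the energy identity, now using the homogeneous conditions $\omega^m=\Psi^m=0$ on the right end and on $B$, again yields $\check\omega_\infty\equiv0$, $\check\Psi_\infty\equiv0$, and $\check u_\infty\equiv0$. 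Since $s_k=a_{n_k}-d_k$ corresponds to $-d_\infty$ after recentering, $|u_{n_k}^m(s_k,\cdot)|\to|\check u_\infty(-d_\infty,\cdot)|=0<\mu_0$, again a contradiction. This proves the lemma.

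The main obstacle is Case~1: a priori the recentered limit could be a nontrivial sub-ignition front, with $\hat T_\infty\leq\theta_0$ and nonzero velocity, and ruling this out is exactly where finiteness of the Dirichlet energy over the whole infinite cylinder combines with $c_\epsilon>0$. The flux computation forces the two end-limits of $\hat T_\infty$ to coincide, after which the energy identity collapses the entire limit to a trivial state. Identifying the correct recentering and separating the two cases by the distance to the right endpoint is the key structural point; the remaining manipulations reuse the energy identities already appearing in Lemmas~\ref{ell_T_bd} and~\ref{bdry_deriv_dies}.
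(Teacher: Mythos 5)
Your proposal is correct and follows essentially the same route as the paper: contradiction via recentering at $s_k$, the case split according to whether $a_{n_k}-s_k$ stays bounded, the use of $c_\epsilon>0$ together with the flux identity to force the two end-limits of the limiting temperature to coincide, and the cascade $\hat T_\infty$ constant $\Rightarrow \hat\omega_\infty\equiv 0 \Rightarrow \hat\Psi_\infty\equiv 0 \Rightarrow \hat u_\infty\equiv 0$. The only cosmetic difference is that you conclude $\hat T_\infty$ is constant from an energy identity where the paper invokes the maximum principle; both are valid.
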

\begin{proof}
We argue by contradiction.  Suppose there is some $\mu>0$ and some sequence $(s_n, x_n, z_n) \in [-a_n, a_n]\times\Omega_p$, with $s_n$ tending to infinity, such that $|u_n^m(s_n, x_n, z_n)| \geq \mu$.  Define the recentered functions $\Phi_n(s, x, z) = T_n^m(s + s_n, x, z)$, $W_n(s, x, z) = \omega_n^m(s + s_n, x, z)$, $U_n(s, x, z) = u^{n, m}(s + s_n, x, z)$ and $S_n(s,x,z) = \Psi_n^m(s+r_n, x, z)$ on $[-a_n-s_n, a_n - s_n]\times \Omega_p$.  There are two cases.

\textit{Case 1:} $a_n - s_n \to \infty$.  Since all the recentered functions satisfy the same bounds as the usual functions, we can take limits in all the relevant topologies, to get functions $\Phi$, $W$, $U$, and $S$, which satisfy the following equations on $\R\times \Omega_p$
\begin{equation}\label{recentered_flow2zero}
	\begin{split}
		-c_\epsilon \Phi_s + u \cdot \tn \Phi &= L_\epsilon \Phi \\
		-c_\epsilon W_s + L_\epsilon W &= \hat{e} \cdot \tn \Phi \\
		L_\epsilon S &= \widetilde{W} \\
		U &= (S_z, S_x + S_s)
	\end{split}
\end{equation}
with the usual boundary conditions.  The first equation is linear since, by our choice of $s_n$, we have that $0 \leq \Phi \leq \theta_0$.  Similarly as in Lemma \ref{bdry_deriv_dies}, we can show that as $s$ tends to $-\infty$ and $\infty$, $\Phi(s,x,z)$ tends to $\Phi^-$ and $\Phi^+$, respectively, uniformly in $x$ and $z$.  Hence integrating the first equation in \eqref{recentered_flow2zero}, we arrive at
$$
c_\epsilon |\Omega_p|(\Phi^- - \Phi^+) = 0.
$$
Since $c_\epsilon>0$ by Lemma \ref{pos_front_speed}, then we have that the $\Phi^- = \Phi^+$.  By the maximum principle, we have that $\Phi$ is constant.  As a result of this, the Dirichlet boundary conditions, and equation \eqref{recentered_flow2zero}, we have that $W$ and $S$ must be zero.  Hence $U$ is equal to $0$.  However, since $U_n$ converges locally to $U$ in $C^{0,\alpha}$ and $\max_{x,z} U_n(0,x,z) \geq \delta$ for every $n$, then $U(0,x,z) \geq \delta>0$ for some $(x,z)\in \Omega_p$ as well.  This is a contradiction.

\textit{Case 2:} $a_n - s_n \to b \in[0,\infty)$.  By Lemma \ref{bdry_deriv_dies}, $\Phi \equiv 0$.  Hence, we argue as above to conclude that $U \equiv 0$.  This is a contradiction, as before.
\end{proof}

%%%%%
%%%%%
\begin{cor}\label{reg_right_limit}
For every $\epsilon$ sufficiently small, every $n$ large enough, and $0<\alpha \leq c_\epsilon/8$, there are constants $C_\alpha, R_\epsilon>0$, which depend only on $\alpha$ and $\epsilon$, respectively, such that
$$
	T^m_n(s,x,z) \leq C_\alpha e^{- \alpha(s - R_\epsilon)}. 
$$
As a result
\[
	T^{m,\epsilon} \leq C_\alpha e^{-\alpha(s-R_\alpha)}.
\]
\end{cor}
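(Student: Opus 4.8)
The plan is to trap $T^m_n$ from above, on the right half of the cylinder, between its own equation and an exponentially decaying supersolution, and then to pass to the limit $n\to\infty$. The first observation is that the normalization \eqref{normalization} forces $T^m_n \le \theta_0$ whenever $s \ge 0$, so there $f(T^m_n)\equiv 0$ and $T^m_n$ solves the \emph{linear} equation $-cT^m_s + u^m\cdot\tn T^m - L_\epsilon T^m = 0$; writing $L_\epsilon = (\partial_x+\partial_s)^2 + \partial_z^2 + \epsilon\partial_s^2$, I would look for a supersolution of the product form $\bar T(s,x,z) = C_\alpha e^{-\alpha(s-R_\epsilon)}\psi_\alpha(x,z)$, where $\psi_\alpha$ is the principal eigenfunction of
\[
-\Delta\psi_\alpha + 2\alpha\,\partial_x\psi_\alpha = \mu_\alpha\psi_\alpha \text{ in } \Omega_p, \qquad \eta\cdot\nabla\psi_\alpha = \alpha\eta_1\psi_\alpha \text{ on } B,
\]
taken periodic in $x$ and normalized by $\|\psi_\alpha\|_{L^\infty}=1$. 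Exactly as in Lemma~\ref{front_speed} (following~\cite{BH_Period}, Proposition~5.7) such a positive $\psi_\alpha$ exists with $\inf\psi_\alpha>0$, and multiplying the eigenvalue equation by $\psi_\alpha$ and integrating (the two boundary contributions cancel) gives $\mu_\alpha = \|\nabla\psi_\alpha\|_{L^2}^2/\|\psi_\alpha\|_{L^2}^2\ge 0$. The Robin condition is chosen precisely so that $\eta\cdot\tn\bar T = 0$ on $B$, matching the conormal condition satisfied by $T^m_n$; note that, since the outward normal along $B$ has no $s$-component, the $\epsilon\partial_s^2$ term drops and the conormal derivative of $L_\epsilon$ on $B$ is exactly $\eta\cdot\tn$.

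A direct computation, substituting the eigenvalue equation, gives
\[
-c\bar T_s + u^m\cdot\tn\bar T - L_\epsilon\bar T = \bar T\left[c\alpha + \mu_\alpha - (1+\epsilon)\alpha^2 - \alpha u_1^m + u_1^m\frac{\partial_x\psi_\alpha}{\psi_\alpha} + u_2^m\frac{\partial_z\psi_\alpha}{\psi_\alpha}\right].
\]
Since $c\ge c_\epsilon>0$ by Lemma~\ref{pos_front_speed}, $\mu_\alpha\ge0$, $\epsilon<1/2$, and $\alpha\le c_\epsilon/8$, the flow-independent part is bounded below by $c_\epsilon\alpha - (1+\epsilon)\alpha^2 \ge \tfrac{13}{16}c_\epsilon\alpha>0$; this is the only place the restriction $\alpha\le c_\epsilon/8$ enters, to keep the diffusive term $(1+\epsilon)\alpha^2$ dominated by the drift $c_\epsilon\alpha$. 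Applying Lemma~\ref{flow_tends_to_zero} with $\mu$ small enough that $\mu\bigl(\alpha + \|\nabla\psi_\alpha/\psi_\alpha\|_{L^\infty}\bigr) \le \tfrac{13}{16}c_\epsilon\alpha$, I obtain $R_\epsilon$ (which I may take $\ge 0$), uniform in $n$, beyond which $|u^m_n|\le\mu$ and hence the bracket is nonnegative; thus $\bar T$ is a supersolution on $[R_\epsilon,a_n]\times\Omega_p$.

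To conclude I would run the maximum principle for $w = T^m_n - \bar T$ on $[R_\epsilon,a_n]\times\Omega_p$. Choosing $C_\alpha = \theta_0/\inf\psi_\alpha$, which depends only on $\alpha$, gives $w\le\theta_0 - C_\alpha\inf\psi_\alpha\le 0$ on $\{s=R_\epsilon\}$; at $s=a_n$ we have $T^m_n=0<\bar T$; on $B$ we have $\eta\cdot\tn w = 0$, and since the conormal direction of $L_\epsilon$ along $B$ is $\eta\cdot\tn$, Hopf's lemma forbids a positive maximum there; the faces $P$ are handled by periodicity. Hence $w\le 0$, i.e. $T^m_n \le C_\alpha e^{-\alpha(s-R_\epsilon)}\psi_\alpha \le C_\alpha e^{-\alpha(s-R_\epsilon)}$. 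Letting $n\to\infty$ and invoking the local uniform convergence $T^m_n\to T^{m,\epsilon}$ recorded at the start of this section yields the stated bound for $T^{m,\epsilon}$.

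The main obstacle is the supersolution itself: the rate $\alpha$ and the conormal condition $\eta\cdot\tn T = 0$ cannot both be met by a bare exponential $e^{-\alpha s}$, whose conormal derivative on $B$ has the wrong sign wherever $\eta_1>0$, which forces the $\alpha$-dependent cross-sectional eigenfunction and the verification $\mu_\alpha\ge 0$. Once the boundary behavior is arranged, the differential inequality closes cleanly using only the qualitative facts $c_\epsilon>0$ and $u^m\to 0$ as $s\to\infty$.
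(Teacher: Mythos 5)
Your proof is correct and follows essentially the same route as the paper: the same $\alpha$-dependent principal eigenfunction with the Robin condition $\eta\cdot\nabla\psi_\alpha = \alpha\eta_1\psi_\alpha$ on $B$, the same product supersolution $Ae^{-\alpha(s-R_\epsilon)}\psi_\alpha$, the same use of Lemma \ref{flow_tends_to_zero} to pick $R_\epsilon$ uniformly in $n$, and the same comparison-principle conclusion with $C_\alpha$ proportional to $1/\inf\psi_\alpha$. The only cosmetic difference is that you verify the comparison directly on the parabolic boundary of $[R_\epsilon,a_n]\times\Omega_p$ rather than via the sliding argument of Lemma \ref{front_speed}, and you should use $c_{\epsilon,a_n}\ge c_\epsilon/2$ (from choosing $n$ large) rather than $c\ge c_\epsilon$, which only changes the numerical constant in the lower bound for the bracket.
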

\begin{proof}
  Recall that $\epsilon < 1/2$.  Choose $n$ large enough that
  $|c_{\epsilon} - c_{\epsilon, a_n}| < c_{\epsilon}/2$.  Then, as we
  did in the proof of Lemma~\ref{front_speed}, let $\psi_e$ be the
  principle eigenfunction of the operator $-\Delta + 2
  \alpha \partial_x$ on $\Omega_p$ with periodic boundary conditions
  on $P$ and satisfying $-\alpha \eta_1\psi_e + \eta \cdot \nabla
  \psi_e$ on $B$.  Then we
  choose $R$ such that
$$\|u^m\|_{L^\infty([R, a_n]\times \Omega_p)}
	\leq \min\left\{\frac{c_\epsilon}{16}, \frac{\alpha c_\epsilon}{16 \|\frac{\nabla \psi_e}{\psi_e}\|_{L^\infty}}\right\}.
$$
Then, letting $\gamma_A(s,x,z) = Ae^{-\alpha(s-R_\epsilon)}$, we see that this is a supersolution on $[R,a_n]\times \Omega_p$ since
\begin{equation*}
	\begin{split}
		-c_{\epsilon,a_n} (\gamma_A)_s &+ u\cdot\tn \gamma_A - L_\epsilon \gamma_A\\
			&= Ae^{-\alpha(s-R_\epsilon)}[(c_{\epsilon,a_n, \delta}\alpha  - (1+\epsilon)\alpha^2)\psi_e - \alpha u_a \psi_e + u\cdot \nabla \psi_e - \Delta \psi_e + 2\alpha (\psi_e)_x]\\
			&\geq Ae^{-\alpha(s-R_\epsilon)} \left[\left(\frac{c_\epsilon}{2} - (1+\epsilon)\alpha\right)\alpha\psi_e - \alpha u_a \psi_e + u\cdot \nabla \psi_e\right]\\
			&\geq Ae^{-\alpha(s-R_\epsilon)} \left[\alpha\psi_e\left(\frac{c_\epsilon}{2} - \frac{c_\epsilon}{4}\right) - \alpha\psi_e \frac{c_\epsilon}{16} - \psi_e\frac{\alpha c_\epsilon}{16}\right]\\
			&= Ae^{-\alpha(s-R_\epsilon)}\psi_e \alpha \frac{c_\epsilon}{8} \geq 0.
	\end{split}
\end{equation*}
As in Lemma \ref{front_speed}, we can show that if $A$ is large enough, then $T^m \leq \gamma_A$ on $[R,a_n]\times\Omega_p$.  We can also argue that this inequality holds for all $A \geq A_0$ where $A_0 = \|\psi_e^{-1}\|_\infty$.  Hence $T_n^m \leq\gamma_{A_0}$ on $[R,a_n]\times\Omega_p$ when $n$ is sufficiently large, which finishes the proof.
\end{proof}

%This proof is similar to the argument appearing in ~\cite{CLR}.
%%%%
%%%%

\subsubsection*{The left and right limits of the temperature}

We will now show that $T^m$ converges to constants uniformly as $s\to \pm\infty$.

\begin{lem}\label{reg_end_limits}
There exists limits $\theta_{\pm}$ such that
\begin{equation}
	\lim_{s\to\pm\infty} \|T^m(s,x,z) - \theta_{\pm}\|_{L^\infty(\Omega_p)} = 0.
\end{equation}
Moreover, the limit on the right hand side is zero and the limit on the left hand side is positive.
\end{lem}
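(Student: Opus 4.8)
The right-hand limit is immediate: Corollary~\ref{reg_right_limit} gives $T^m(s,x,z)\le C_\alpha e^{-\alpha(s-R_\alpha)}$, so together with $T^m\ge 0$ we conclude that $T^m(s,\cdot)\to 0$ uniformly on $\Omega_p$, i.e. $\theta_+=0$. The substance of the lemma is therefore the existence and positivity of the left-hand limit. My plan is to combine gradient decay with an integration of the temperature equation. The derivative estimates noted before the statement give that each first derivative of $T^m$ lies in $L^2(\R\times\Omega_p)$ and is uniformly H\"older; the same $L^2\cap C^{0,\alpha}$ argument that handles $s\to+\infty$ applies at $s\to-\infty$, so all first derivatives of $T^m$ tend to $0$ uniformly on $\Omega_p$ as $s\to-\infty$ (for $\partial_x T^m$ one writes $\partial_x T^m=(\partial_x+\partial_s)T^m-T^m_s$, both summands being in $L^2$ via Proposition~\ref{finite_domain_bound_close} and Lemma~\ref{ell_T_bd}). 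Setting $\bar T(s)=|\Omega_p|^{-1}\int_{\Omega_p}T^m(s,x,z)\,dx\,dz$, the oscillation bound $\sup_{\Omega_p}T^m(s,\cdot)-\inf_{\Omega_p}T^m(s,\cdot)\le \diam(\Omega_p)\,\|\nabla_{x,z}T^m(s,\cdot)\|_{L^\infty(\Omega_p)}\to 0$ reduces the claim to convergence of the scalar $\bar T(s)$.

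To pin down $\lim\bar T$ I would integrate the first equation of \eqref{regularized_equation_infinite_domain} over a slab $[s_1,s_2]\times\Omega_p$. Using $\tn\cdot u^m=0$ to write $u^m\cdot\tn T^m=\tn\cdot(u^m T^m)$ and applying the divergence theorem, every boundary contribution except the two faces $s=s_1,s_2$ drops out by periodicity on $P$ and by the conditions $u^m\cdot\eta=0$ and $\eta\cdot\tn T^m=0$ on $B$; the same computation disposes of $\int L_\epsilon T^m$. The surviving face terms involve $u^m$, $\tn T^m$ and $T^m_s$ evaluated at $s_1,s_2$, all of which vanish as $s_1,s_2\to-\infty$, leaving
\begin{equation*}
\int_{[s_1,s_2]\times\Omega_p} f(T^m)\,dx\,dz\,ds = -c\,|\Omega_p|\bigl(\bar T(s_2)-\bar T(s_1)\bigr) + o(1).
\end{equation*}
Since the left side is nonnegative and $c=c_\epsilon>0$ by Lemma~\ref{pos_front_speed}, any sequences $s_k<s_k'$ tending to $-\infty$ along which $\bar T$ converges to $\theta$ and $\theta'$ must satisfy $\theta'\le\theta$. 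Interleaving two such sequences $\cdots<s_k<s_k'<s_{k+1}<\cdots$ and applying the inequality on both $[s_k,s_k']$ and $[s_k',s_{k+1}]$ forces $\theta=\theta'$; hence $\theta_-:=\lim_{s\to-\infty}\bar T(s)$ exists, and by the oscillation estimate $T^m(s,\cdot)\to\theta_-$ uniformly on $\Omega_p$.

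Positivity of $\theta_-$ then follows by integrating the temperature equation over the entire cylinder $\R\times\Omega_p$. As above the advection and diffusion terms integrate to zero (all boundary fluxes vanish, now including those at $s=\pm\infty$ since $u^m,\tn T^m,T^m_s\to0$ there), while $-c\int T^m_s = -c|\Omega_p|(\theta_+-\theta_-)=c|\Omega_p|\theta_-$ because $\theta_+=0$. Passing the bound of Lemma~\ref{burning_lower_bd} to the limit $a_n\to\infty$ (Fatou, using $f\ge 0$ and local convergence $T_n^m\to T^m$) gives $\int_{\R\times\Omega_p}f(T^m)\ge C_\epsilon>0$, so
\[
c\,|\Omega_p|\,\theta_- = \int_{\R\times\Omega_p} f(T^m)\,dx\,dz\,ds \ \ge\ C_\epsilon>0,
\]
and $c>0$ yields $\theta_->0$.

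I expect the existence of the left limit, not its positivity, to be the main obstacle, and it is precisely the difficulty advertised in the introduction: lacking monotonicity in $s$, one cannot argue that $\bar T$ is monotone, so convergence must be squeezed out of the sign of $\int f$ together with $c>0$ through the interleaving argument. This hinges on the delicate point that \emph{every} boundary flux in the slab identity genuinely cancels, which in turn requires justifying the gradient decay at $-\infty$ (and not merely at $+\infty$) and making rigorous the passage from pointwise values of $T^m$ to cross-sectional averages via the vanishing oscillation.
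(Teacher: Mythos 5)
Your treatment of the right limit and of the existence of the left limit is essentially the paper's: the right limit is read off from Corollary~\ref{reg_right_limit}, and the left limit is obtained by integrating the equation over slabs $[s_k,s_k']\times\Omega_p$, using the uniform decay of $u^m$ and $\nabla T^m$ (from $L^2\cap C^{0,\alpha}$) to kill the flux terms, and using $c_\epsilon>0$ to force all subsequential limits of the cross-sectional average to coincide. (The paper routes this through Lemma~\ref{bdry_deriv_dies}; your interleaving argument, which exploits $f\ge 0$ to get one-sided monotonicity of $\bar T$ up to $o(1)$, is a correct and if anything more careful variant, since it does not presuppose $\int_{\R\times\Omega_p}f(T^m)<\infty$.)

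The positivity of $\theta_-$, however, has a genuine gap. You invoke ``Fatou'' to pass the lower bound of Lemma~\ref{burning_lower_bd} to the limit, but Fatou's lemma gives the inequality in the wrong direction: for $f(T^m_n)\ge 0$ converging locally uniformly to $f(T^m)$ it yields
\begin{equation*}
\int_{\R\times\Omega_p} f(T^m)\,dx\,dz\,ds \;\le\; \liminf_n \int_{R_{a_n}} f(T^m_n)\,dx\,dz\,ds,
\end{equation*}
which is an upper bound on the limiting integral, not a lower one. A lower bound would require ruling out that the reaction zone of $T_n^m$ (the set where $T_n^m$ crosses the interval $(r_1,r_2)$, which Lemma~\ref{burning_lower_bd} only locates somewhere in $(-a_n,0)$) escapes to $s=-\infty$, in which case the limit $T^m$ could satisfy $f(T^m)\equiv 0$ while every $\int_{R_{a_n}}f(T^m_n)$ stays bounded below. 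The paper closes this differently: it first derives the identity $c_\epsilon|\Omega_p|\theta_-=\int_{\R\times\Omega_p}f(T^m)\,dx\,dz\,ds$ by integrating over the whole cylinder (as you do), and then argues that if the right-hand side vanished, $T^m$ would solve a reaction-free (linear) equation while still attaining the value $\theta_0$ at a finite point by the normalization \eqref{normalization} and the decay on the right; the strong maximum principle then forces $T^m\equiv\theta_0$, contradicting $\theta_+=0$. You should replace the Fatou step with an argument of this type (or otherwise prevent the escape of reaction mass to $-\infty$); as written, the positivity claim is not established.
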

\begin{proof}
The limit on the right hand side follows from Corollary \ref{reg_right_limit}.  Hence we need only consider the limit on the left hand side.  The proof of this is handled exactly as in Lemma \ref{bdry_deriv_dies}.

The fact that this limit is positive follows from integrating equation \eqref{regularized_equation_infinite_domain} to obtain
\begin{equation}\label{reaction_rate}
c_\epsilon|\Omega_p| \theta_- = \int_{\R\times\Omega_p} f(T^m)dxdzds.
\end{equation}
As $c_\epsilon$
is positive,  positivity of $\theta_-$ is equivalent to the positivity of
the integral  on the right.  However, if  
the integral were zero, $T^m$ would solve an elliptic problem
and achieves a maximum, $\theta_0$, in the interior of the domain.
This contradicts the maximum principle.  Hence, it must be that
\[
	\int_{\R\times\Omega_p}f(T^m)dxdzds >0,
\]
and thus $\theta_-$ is positive. 
\end{proof}

\subsubsection*{Bounds in the stationary frame}

We will now change variables back to the stationary frame.  We will
obtain uniform bounds in $\epsilon$ in order to pass to the limit
$\epsilon \downarrow 0$.  Then we will only need to deal with the
$\delta$-dependence and the boundary conditions in the next section in
order to finish the proof of Theorem \ref{the_theorem}.

Define the function $T^\epsilon(t,x,z) = T^m(x - c_\epsilon t, x, z)$
on $\R \times \Omega$.  Make the same change of coordinates to define
the functions $\omega^\epsilon$, $u^\epsilon$, and $\Psi^\epsilon$.
We first get some local, uniform bounds on these
functions.  Here we follow the work of Berestycki and Hamel in
\cite{BH_Period}.

%%%%%
%%%%%
\begin{lem}\label{temp_grad_bd}
  For every compact set $K \subset \Omega$, there is a constant
  $C=C(\diam(K))>0$ that does not depend on $\epsilon$ such that
\begin{equation}\label{temp_spatial_sobolev_bd}
	\int_{\R\times K} |\nabla T^\epsilon|^2 dtdxdz
		\leq C.
\end{equation}
\end{lem}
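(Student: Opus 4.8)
The plan is to run a Berestycki--Hamel type energy estimate in the stationary frame, exploiting the fact that $t$-integrals of reaction and advection quantities convert, under the change of variables $s = x - c_\epsilon t$, into moving-frame integrals carrying a factor $c_\epsilon^{-1}$ that cancels the $c_\epsilon$ appearing in the identities of this section.

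First I would record the stationary-frame equation. Since $T^\epsilon(t,x,z) = T^m(x-c_\epsilon t, x,z)$ one has $\partial_t T^\epsilon = -c_\epsilon\,\partial_s T^m$, $\nabla T^\epsilon = \tn T^m$, and $\Delta T^\epsilon + \tfrac{\epsilon}{c_\epsilon^2}\partial_t^2 T^\epsilon = L_\epsilon T^m$, so that
\begin{equation*}
\partial_t T^\epsilon + u^\epsilon\cdot\nabla T^\epsilon = \Delta T^\epsilon + \frac{\epsilon}{c_\epsilon^2}\,\partial_t^2 T^\epsilon + f(T^\epsilon).
\end{equation*}
I would also record the far-field data: by Lemma~\ref{reg_end_limits} and Corollary~\ref{reg_right_limit}, as $t\to+\infty$ (i.e. $s\to-\infty$) one has $T^\epsilon\to\theta_-$ and $\nabla T^\epsilon,\partial_t T^\epsilon,u^\epsilon\to0$, while as $t\to-\infty$ (i.e. $s\to+\infty$) $T^\epsilon$ and the listed quantities decay exponentially. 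These limits make all of the boundary-in-time terms below meaningful.

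Next is the energy identity. Fix a cutoff $\zeta=\zeta(x,z)$ with $0\le\zeta\le1$, $\zeta\equiv1$ on $K$, supported in a bounded neighborhood $K'$ of $K$ with $\|\nabla\zeta\|_{L^\infty}\le C(\diam K)$. I would multiply the equation by $T^\epsilon\zeta^2$, integrate over $[-R,R]\times\Omega$, and integrate by parts in space; the Neumann condition $\eta\cdot\tn T^\epsilon=0$ kills the boundary term on $\partial\Omega$, producing
\begin{equation*}
\int \zeta^2|\nabla T^\epsilon|^2 = \int f(T^\epsilon)T^\epsilon\zeta^2 - \int (\partial_t T^\epsilon)T^\epsilon\zeta^2 + \frac{\epsilon}{c_\epsilon^2}\int(\partial_t^2 T^\epsilon)T^\epsilon\zeta^2 - \int (u^\epsilon\cdot\nabla T^\epsilon)T^\epsilon\zeta^2 - 2\int T^\epsilon\zeta\,\nabla T^\epsilon\cdot\nabla\zeta.
\end{equation*}
The time-derivative term equals $-\tfrac12\theta_-^2\int_\Omega\zeta^2\le0$ after using the limits, so it only helps; the $\epsilon$-term, after integrating by parts once in $t$ (the boundary term vanishing since $\partial_t T^\epsilon\to0$), equals $-\tfrac{\epsilon}{c_\epsilon^2}\int(\partial_t T^\epsilon)^2\zeta^2\le0$ and may be dropped. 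The reaction term is made uniform by passing to the moving frame: since $f\ge0$ and $0\le T^\epsilon\le1$,
\begin{equation*}
\int_{\R\times\Omega}f(T^\epsilon)T^\epsilon\zeta^2\,dt\,dx\,dz \le \frac{1}{c_\epsilon}\int_{\R\times\Omega}f(T^m)\zeta^2\,ds\,dx\,dz \le \frac{N}{c_\epsilon}\int_{\R\times\Omega_p}f(T^m)\,ds\,dx\,dz = N|\Omega_p|\theta_-,
\end{equation*}
where $N=N(\diam K)$ is the number of period cells meeting $K'$ and I used the reaction identity \eqref{reaction_rate}. The factor $c_\epsilon$ cancels, so no lower bound on $c_\epsilon$ is needed here.

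The main obstacle is the remaining two terms, both localized to $\operatorname{supp}\nabla\zeta$. Using $\tn\cdot u^\epsilon=0$ and $u^\epsilon\cdot\eta=0$ on $\partial\Omega$ I would rewrite the advection term as $-\int (T^\epsilon)^2\zeta\,u^\epsilon\cdot\nabla\zeta$, whose integrand is bounded by $|u^\epsilon|$ and supported where $x$ is bounded; the cross term I would bound, without splitting it by Young's inequality (which would produce a spuriously divergent $\int(T^\epsilon)^2\,dt$), directly by $2\|\nabla\zeta\|_{L^\infty}\int_{\R\times\operatorname{supp}\nabla\zeta}|\nabla T^\epsilon|$. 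Both are finite because, on the bounded spatial set $\operatorname{supp}\nabla\zeta$, the quantities $u^\epsilon$ and $\nabla T^\epsilon$ decay exponentially as $t\to-\infty$ (Corollary~\ref{reg_right_limit} together with parabolic regularity for the derivatives) and tend to zero as $t\to+\infty$ (where $T^\epsilon$ approaches the constant $\theta_-$), so each is integrable in $t$. I expect controlling these two terms uniformly in $\epsilon$ to be the delicate point: it requires an $\epsilon$-independent bound on the time integrability of $u^\epsilon$ and $\nabla T^\epsilon$ on bounded spatial sets, which one extracts from the uniform far-field description of the front developed in this section. Once these are bounded by $C(\diam K)$, the left-hand side is bounded directly; letting $R\to\infty$ and using $\zeta\equiv1$ on $K$ yields $\int_{\R\times K}|\nabla T^\epsilon|^2\le C(\diam K)$, as claimed.
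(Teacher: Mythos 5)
Your overall strategy---an energy estimate in which the reaction term is controlled through the identity \eqref{reaction_rate}, with the $c_\epsilon$ from the change of variables cancelling the $c_\epsilon$ in that identity---is the right idea, and your treatment of the reaction, time-derivative, and $\epsilon$-terms is sound. But there is a genuine gap exactly where you flag ``the delicate point'': the two terms supported on $\operatorname{supp}\nabla\zeta$. You assert that $\int_{\R\times\operatorname{supp}\nabla\zeta}|\nabla T^\epsilon|\,dt\,dx\,dz$ and $\int_{\R\times\operatorname{supp}\nabla\zeta}|u^\epsilon|\,dt\,dx\,dz$ are bounded uniformly in $\epsilon$ by appealing to the far-field description of the front, but no such uniform bound is available at this stage. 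Changing variables for fixed $(x,z)$ gives $\int_\R|\nabla T^\epsilon(t,x,z)|\,dt = c_\epsilon^{-1}\int_\R|\tn T^m(s,x,z)|\,ds$ and likewise for $u^\epsilon$, so you would need $L^1_s$ bounds of order $c_\epsilon$; unlike in the $L^2$ estimate, there is no cancellation of the factor $c_\epsilon^{-1}$ here. Worse, the quantitative decay you invoke (Corollary \ref{reg_right_limit}) has rate $\alpha\le c_\epsilon/8$ and a threshold $R_\epsilon$ depending on $\epsilon$; making these uniform requires the lower bound on $c_\epsilon$ of Proposition \ref{pos_front_speed_unregularized}, whose proof in turn relies on the uniform $H^1_{loc}$ bounds of this very lemma. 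As written, the argument is therefore either incomplete or circular.

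The paper avoids the cutoff entirely by performing the energy estimate in the moving frame over $\R\times\Omega_p$: there the spatial domain is a single compact period cell, the lateral boundary consists only of $B$ (where $\eta\cdot\tn T^m=0$ and $u^m\cdot\eta=0$) and the periodic boundary $P$, so multiplying by $T^m$ and integrating produces no error terms at all---in particular the advection term vanishes exactly by incompressibility and periodicity, with no need for a cutoff. This yields $\epsilon\int(T^m_s)^2+\int|\tn T^m|^2\le c_\epsilon|\Omega_p|\bigl(\theta_--\theta_-^2/2\bigr)$, and only then does one change variables, the factor $c_\epsilon$ cancelling, to obtain the bound on each strip $\{x\in(k\ell,(k+1)\ell)\}$; covering $K$ by finitely many such strips finishes. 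If you want to salvage your stationary-frame computation, the fix is to exploit the $\ell$-periodicity of $T^m$ in $x$ by doing the integration on the period cell in the moving frame first, rather than localizing with $\zeta$ in the stationary frame.
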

\begin{proof}
  We begin by noting that $0\leq T^m \leq 1$ and that $T^m_x$,
  $T^m_s$, and $T^m_z$ tend to zero uniformly as $s$ tends to infinity
  since $T^m$ is bounded in $C^{1,\alpha}$ and $\nabla_{s,x,z} T^m$ is
  bounded in $L^2$ (by constants possibly depending on $\epsilon$).
  Hence we can multiply \eqref{regularized_equation_infinite_domain}
  by $T^m$ and integrate to obtain
\begin{equation*}
	c_\epsilon |\Omega_p|\frac{\theta_-^2}{2}
		+ \epsilon\int_{\R\times\Omega_p} (T^m_s)^2 dxdzds + \int_{\R\times\Omega_p}|\tn T^m|^2dxdzds
		= \int_{\R\times\Omega_p} T f(T)dxdzds.
\end{equation*}
Now, using equation \eqref{reaction_rate}, we get
\begin{equation*}
	\epsilon\int_{\R\times\Omega_p} (T^m_s)^2 dxdzds + \int_{\R\times\Omega_p}|\tn T^m|^2dxdzds
\le c_\epsilon |\Omega_p|\left(\theta_- - \frac{\theta_-^2}{2}\right).
\end{equation*}
Ignoring the first term and changing variables appropriately, we arrive at
\begin{equation*}
	\int_{\R\times\{(x,z)\in \Omega: x\in (k\ell, (k+1)\ell)\}}|\nabla T|^2dxdz
		\leq |\Omega_p|\left(\theta_- - \frac{\theta_-^2}{2}\right),
\end{equation*}
for any integer $k$.  To finish the proof, simply notice that given a compact set $K\subset \Omega$, we can find $k_1, \dots, k_n \in \Z$, where $n$ depends only on the diameter of $K$, such that
$$
K \subset \{(x,z): (x,z)\in \Omega, x \in [k_iL, (k_i+1)L] \mbox{ for some } i\}.
$$
\end{proof}

%%%%%
%%%%%
\begin{lem}\label{vort_grad_bd}
For every compact set $K \subset \Omega$, there is a constant
$C=C(\diam(K))>0$ that does not depend on $\epsilon$ such that
\begin{equation}\label{vort_sobolev_bd}
	\int_{\R\times K} |\nabla \omega^\epsilon|^2 dtdxdz + \int_{\R\times K} |\omega_t^\epsilon|^2 dtdxdz
		\leq C_K.
\end{equation}
\end{lem}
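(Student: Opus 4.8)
The plan is to reduce the claimed stationary-frame bound to two $\epsilon$-uniform energy estimates in the moving frame and then transfer them back through the (shear) change of variables. Recall that under $\omega^\epsilon(t,x,z)=\omega^m(x-c_\epsilon t,x,z)$ one has, componentwise, $\nabla\omega^\epsilon=\tn\omega^m$ and $\omega^\epsilon_t=-c_\epsilon\,\omega^m_s$, while the change of variables has Jacobian comparable to $1/c_\epsilon$. Since $c_\epsilon$ is bounded above (Lemma~\ref{front_speed} together with the uniform velocity bound) and below away from zero uniformly in $\epsilon$, it therefore suffices to prove
\[
\int_{\R\times\Omega_p}|\tn\omega^m|^2\,dsdxdz\le C\quad\text{and}\quad\int_{\R\times\Omega_p}(\omega^m_s)^2\,dsdxdz\le C,
\]
with $C$ independent of $\epsilon$; covering $K$ by finitely many translates of $\Omega_p$ and using the $x$-periodicity of $\omega^m$ then yields the estimate on $\R\times K$.

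For the gradient bound I would essentially rerun Proposition~\ref{omega_h2} on the infinite cylinder, watching the $\epsilon$-dependence. Multiplying the vorticity equation in \eqref{regularized_equation_infinite_domain} by $\omega^m$ and integrating over $\R\times\Omega_p$, the transport term $-c_\epsilon\int\omega^m_s\omega^m$ is a total $s$-derivative and vanishes because $\omega^m\to0$ as $s\to\pm\infty$; integrating $-L_\epsilon\omega^m$ by parts produces $\int(|\tn\omega^m|^2+\epsilon(\omega^m_s)^2)$ with no boundary contributions (the Dirichlet condition $\omega^m=0$ on $B$, periodicity on $P$, decay at $s=\pm\infty$), the $\epsilon$-term carrying a favorable sign. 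Bounding the right-hand side by Cauchy--Schwarz and applying the cross-sectional Poincar\'e inequality in the transverse variable (valid since $\omega^m=0$ on $B$) to write $\|\omega^m\|_{L^2}\le C\|\omega^m_z\|_{L^2}\le C(\int|\tn\omega^m|^2)^{1/2}$, one closes the inequality and obtains $\int|\tn\omega^m|^2\le C\|\tn T^m\|_{L^2(\R\times\Omega_p)}^2$. The right side is controlled uniformly in $\epsilon$ by the moving-frame form of Lemma~\ref{temp_grad_bd}, and crucially this step requires no division by $c_\epsilon$.

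For the time-derivative bound I would instead multiply the vorticity equation by $\omega^m_s$ and integrate over $\R\times\Omega_p$. The point is that the entire second-order contribution integrates to zero: both $\int\tn\omega^m\cdot\tn\omega^m_s=\tfrac12\int\partial_s|\tn\omega^m|^2$ and $\epsilon\int\omega^m_{ss}\omega^m_s=\tfrac{\epsilon}{2}\int\partial_s(\omega^m_s)^2$ are total $s$-derivatives whose boundary terms vanish (decay at $s=\pm\infty$; $\omega^m=0$, hence $\omega^m_s=0$, on $B$; periodicity on $P$). This leaves $-c_\epsilon\int(\omega^m_s)^2=\int(\hat e\cdot\tn T^m)\omega^m_s$, so Cauchy--Schwarz together with the uniform lower bound $c_\epsilon\ge c_0>0$ gives $\|\omega^m_s\|_{L^2}\le c_0^{-1}\|\tn T^m\|_{L^2}\le C$, again uniformly in $\epsilon$. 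Transferring both estimates to the stationary frame as in the first paragraph then finishes the proof.

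The only non-routine issue is maintaining $\epsilon$-uniformity, and it works precisely because the transport term $-c_\epsilon\partial_s$ and the regularizing term $\epsilon\partial_s^2$ contribute either nothing (being total $s$-derivatives that integrate out) or a quantity of good sign; hence no constant ever acquires a factor blowing up as $\epsilon\downarrow0$. The two ingredients I must keep explicit are the uniform upper and positive lower bounds on $c_\epsilon$---needed both to divide by $c_\epsilon$ in the $\omega^m_s$ estimate and to bound the Jacobian of the change of frame---and the justification of the global integrations by parts in $s$, which rests on the decay of $\omega^m$ and its first derivatives as $s\to\pm\infty$ recorded at the start of this section.
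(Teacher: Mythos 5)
Your two energy identities (testing the vorticity equation with $\omega^m$ and with $\omega^m_s$) are exactly the ones the paper uses, and your justification of the global integrations by parts is fine. The genuine gap is in how you dispose of the factors of $c_\epsilon$: you invoke a uniform-in-$\epsilon$ upper bound on $c_\epsilon$ (to claim $\|\tn T^m\|_{L^2(\R\times\Omega_p)}^2\le C$; Lemma \ref{temp_grad_bd} in the moving frame only gives $\int|\tn T^m|^2\le c_\epsilon|\Omega_p|(\theta_--\theta_-^2/2)$) and, worse, a uniform positive lower bound $c_\epsilon\ge c_0>0$ (to divide in the $\omega^m_s$ estimate and to control the Jacobian of the change of frame). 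Neither is available at this point. The uniform upper bound is Corollary \ref{front_speed_upper_bound_independent_of_epsilon}, which rests on the $\epsilon$-independent $L^\infty$ bound for $u$ in Lemma \ref{flow_bounds}; the uniform lower bound is Proposition \ref{pos_front_speed_unregularized}, which also rests on Lemma \ref{flow_bounds}; and Lemma \ref{flow_bounds} is proved \emph{from} the present lemma via $\|\tilde\omega\|_{H^1}$. So your argument as written is circular. (The lower bound known at this stage, Lemma \ref{pos_front_speed}, degenerates as $\epsilon\downarrow0$, as the paper points out.)

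The fix — and this is what the paper actually does — is to never divide by $c_\epsilon$ and to let the powers of $c_\epsilon$ cancel against the change of variables. The $\omega^m_s$ estimate should be kept in the form $\int_{\R\times\Omega_p}(c_\epsilon\omega^m_s)^2\le\int_{\R\times\Omega_p}|\tn T^m|^2$, and the gradient estimate in the form $\int|\tn\omega^m|^2\le\int|\tn T^m|^2$. Since $\omega^\epsilon_t=-c_\epsilon\omega^m_s$ while $dt=ds/c_\epsilon$, one gets
\[
\int_{\R\times K}|\omega^\epsilon_t|^2\,dtdxdz=\frac{1}{c_\epsilon}\int(c_\epsilon\omega^m_s)^2\,dsdxdz\le\frac{1}{c_\epsilon}\int|\tn T^m|^2\,dsdxdz\le|\Omega_p|\Bigl(\theta_--\frac{\theta_-^2}{2}\Bigr),
\]
and likewise for $\nabla\omega^\epsilon$; the $c_\epsilon$'s cancel exactly, so no bound on $c_\epsilon$ from either side is needed. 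This is precisely the content of the paper's parenthetical remark that changing variables ``removes the factor of $c_\epsilon$,'' and it is the one idea your write-up is missing.
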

\begin{proof}
  Since $\omega^m \in H^1(\R\times\Omega_p)\cap
  C^{1,\alpha}(\R\times\Omega_p)$, then $\omega^m$ and its first
  derivatives tend to zero uniformly as $s$ tends to infinity.  Hence,
  multiplying equation \eqref{regularized_equation_infinite_domain} by
  $\omega^m$ and integrating over $\R\times\Omega_p$ yields
\begin{equation*}
	\epsilon\int_{\R\times \Omega_p} (\omega^m_s)^2 dsdxdz + \int_{\R\times\Omega_p} |\tn \omega^m|^2 dxdzds = \int_{\R\times \Omega_p} \omega^m(\hat{e}\cdot \tn T^m) dsdxdz
\end{equation*}
Using Poincar\'e's inequality and the non-negativity of $(\omega_s^m)^2$, we arrive at
\begin{equation*}
	\int_{\R\times\Omega_p} |\tn \omega^m|^2dxdzds
		\leq \left(\int_{\R\times \Omega_p} (\omega_z^m)^2 dsdxdz\right)^{1/2}\left(\int_{\R\times \Omega_p}(\tn T^m)^2 dsdxdz\right)^{1/2}
\end{equation*}
Changing variables and using Lemma \ref{temp_grad_bd}, finishes the
bound on the spatial gradient.

For the bound on the time derivative, first notice that $\omega_s^m$
is zero on one part of the boundary, and periodic on the other, and
tends to zero as $s$ tends to zero.  If we multiply
\eqref{regularized_equation_infinite_domain} by $\omega^m_s$, we
integrate to obtain
\begin{equation}\label{vort_grad_calc1}
	\begin{split}
		c_\epsilon\int_{\R\times \Omega_p} (\omega^m_s)^2dsdxdz
			&= -\int_{\R\times\Omega_p} \omega_s^m (\hat{e} \cdot \tn T^m) dsdxdz
				- \int_{\R\times\Omega_p} \omega_s^m L_\epsilon \omega^m dsdxdz\\
			&\leq \left(\int_{\R\times\Omega_p} (\omega_s^m)^2 dxdzds\right)^\frac{1}{2}\left(\int_{\R\times\Omega_p} |\tn T^m|^2 dxdzds\right)^\frac{1}{2}\\
			&\quad + \int_{\R\times\Omega_p} \left(\tn
                          \omega^m \cdot \tn \omega^m_s + 
(1+\epsilon)\omega_s \omega_{ss}\right)dxdsdz.
	\end{split}
\end{equation}
The last line comes from applying the Cauchy-Schwartz inequality to the first term and integrating the second term by parts.  Since the integrand of the last term is
\[
	\frac12\left(\partial_s |\tn \omega^m|^2 + \partial_s (\omega_s^m)^2\right),
\]
it  integrates to zero.  Hence, \eqref{vort_grad_calc1} becomes
\begin{equation*}
	\int_{\R\times \Omega_p} (c_\epsilon\omega^m_s)^2 dxdzds
		\leq \int_{\R\times\Omega_p} |\tn T^m|^2dxdzds.
\end{equation*}
Changing variables (which removes the factor of $c_\epsilon$) and
using Lemma \ref{temp_grad_bd} finishes the proof.
\end{proof}

Next we will prove bounds on the stream function and the fluid velocity using our estimates above.  However, since the equation for the stream function degenerates in the $t$ variable, we will first prove a general result concerning H\"older norms for degenerate families of elliptic equations.

\begin{lem}\label{2d_schauder_trick}
Let $U$ be a smooth open domain in $\R^n$, and let $\alpha \in (0,1)$.  Suppose that $\phi$ is a $C^2(\R\times U)$ solution to
\begin{equation*}
	\begin{split}
		\beta^2 \phi_{tt} + \Delta \phi &= g ~~~ \text{ on } \R\times U\\
		\phi &= 0~~~\text{ on } \R\times\partial U
	\end{split}
\end{equation*}
where $g$ is in bounded in $C^{0,\alpha}(\R\times U)$ and where $0< \beta < \beta_0$. Then there exists a constant $C$, which depends only on $\beta_0$, $\alpha$, and the smoothness of the boundary of $U$, such that
\[
\|\phi(t,\cdot)\|_{C^{2,\alpha}(U)}
	\leq C\left(\|\phi(t,\cdot)\|_{C^0(U)} + \| g \|_{C^{0,\alpha}(\R\times U)} \right),
\]
for every $t\in \R$.
\end{lem}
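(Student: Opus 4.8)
The plan is to eliminate the degeneracy by rescaling the $t$-variable, trading the vanishing ellipticity as $\beta\downarrow 0$ for a harmless power of $\beta$ on the right-hand side. For fixed $\beta>0$ the operator $\beta^2\partial_{tt}+\Delta$ is uniformly elliptic, but its ellipticity constant tends to $0$ with $\beta$, so a direct Schauder estimate would produce a constant blowing up as $\beta\downarrow 0$; this is precisely the difficulty. To fix it, set $\tau=t/\beta$ and define $\tilde\phi(\tau,x)=\phi(\beta\tau,x)$ and $\tilde g(\tau,x)=g(\beta\tau,x)$. Since $\beta^2\phi_{tt}=\tilde\phi_{\tau\tau}$, the function $\tilde\phi$ solves the non-degenerate Poisson problem
\begin{equation*}
\partial_\tau^2\tilde\phi+\Delta\tilde\phi=\tilde g \text{ on } \R\times U, \qquad \tilde\phi=0 \text{ on } \R\times\partial U,
\end{equation*}
whose operator is the ordinary Laplacian on the cylinder $\R\times U$, with ellipticity constant $1$ independent of $\beta$.

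Next I would verify that the data are controlled uniformly in $\beta$ and then run the standard Schauder machinery. The purely spatial H\"older seminorm of $\tilde g$ equals that of $g$, while an increment in $\tau$ costs a factor $\beta^\alpha$: with $t=\beta\tau$ and $t'=\beta\tau'$ one has $|\tilde g(\tau,x)-\tilde g(\tau',x)|\le \beta^\alpha[g]_{C^{0,\alpha}}|\tau-\tau'|^\alpha$. Because $\beta<\beta_0$, this yields $\|\tilde g\|_{C^{0,\alpha}(\R\times U)}\le C(\beta_0)\|g\|_{C^{0,\alpha}(\R\times U)}$ with $C(\beta_0)=C\max\{1,\beta_0^\alpha\}$. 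Now apply interior and boundary Schauder estimates for the Laplacian on balls of a fixed radius. The equation and the boundary $\R\times\partial U$ are invariant under translation in $\tau$, and $\R\times\partial U$ is a smooth cylinder over $\partial U$, so the local estimates hold with a constant independent of the ball's center; summing over a cover by such balls exactly as in~\cite{BCR}, Lemma~2.3, produces the uniform bound $\|\tilde\phi\|_{C^{2,\alpha}}\le C(\|\tilde\phi\|_{C^0}+\|\tilde g\|_{C^{0,\alpha}})$ with $C$ depending only on $\alpha$ and the smoothness of $\partial U$.

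Finally I would undo the scaling. Since the change of variables acts only on the first coordinate, the spatial derivatives of $\tilde\phi(\tau,\cdot)$ at $\tau=t/\beta$ coincide with those of $\phi(t,\cdot)$, so $\|\phi(t,\cdot)\|_{C^{2,\alpha}(U)}=\|\tilde\phi(\tau,\cdot)\|_{C^{2,\alpha}_x(U)}$ is dominated by the full $C^{2,\alpha}$ norm of $\tilde\phi$ bounded above; combined with the uniform estimate for $\tilde g$, this gives the claim with a constant depending only on $\beta_0$, $\alpha$, and $\partial U$. The main obstacle is exactly this uniformity in $\beta$, and the rescaling resolves it by converting the degeneracy into the factor $\beta^\alpha\le\max\{1,\beta_0^\alpha\}$ on the temporal H\"older seminorm of the data. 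One mild caveat concerns the $C^0$ term: the local Schauder estimate at $\tau$ uses $\tilde\phi$ on a fixed-radius ball, hence $\phi$ on a $t$-window of width $O(\beta)$, so it naturally produces $\sup_{|t'-t|\le C\beta}\|\phi(t',\cdot)\|_{C^0(U)}$; this collapses to $\|\phi(t,\cdot)\|_{C^0(U)}$ as $\beta\downarrow0$ and is in any case bounded by $\|\phi\|_{C^0(\R\times U)}$, which is all that is needed in the applications to the stream function.
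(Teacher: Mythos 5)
Your proof is correct and follows essentially the same route as the paper: rescale $t\mapsto \beta t$ to turn the degenerate operator into the standard Laplacian on the cylinder, apply translation-invariant Schauder estimates, and observe that the temporal H\"older seminorm of the rescaled data only picks up a factor $\beta^\alpha\le\max\{1,\beta_0^\alpha\}$. Your closing caveat about the $C^0$ term (the local estimate really sees $\sup$ of $\phi$ over a small $t$-window, which is harmlessly absorbed into $\|\phi\|_{C^0(\R\times U)}$) is a point the paper glosses over, and is a worthwhile clarification.
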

\begin{proof}
Define $\phi_\beta(t,x) = \phi(\beta t, x)$ and $g_\beta(t,x) = g(\beta t, x)$.  Then $\phi_\beta$ satisfies
\[
	\begin{split}
		\Delta_{t,x}\phi_\beta &= g_\beta ~~~\text{ on } \R\times U\\
		\phi_\beta &= 0~~~\text{ on } \R\times \partial U.
	\end{split}
\]
The usual Schauder estimates tell us that
\[
	\|\phi_\beta\|_{C^{2,\alpha}}
	\leq C\left(\|\phi\|_{C^0(U)} + \| g_\beta \|_{C^{0,\alpha}(\R\times U)} \right).
\]
Notice that
\[
\|g_\beta\|_{C^{0,\alpha}(\R\times U)} \leq (1+\beta^\alpha)\|g\|_{C^{0,\alpha}(\R\times U)}.
\]
Hence we get that
\[
	\|\phi(t,\cdot)\|_{C^{2,\alpha}(U)}
	\leq C\left(\|\phi\|_{C^0(\R\times U)} + (1+\beta_0)^\alpha\| f \|_{C^{0,\alpha}(\R\times U)} \right).
\]
\end{proof}

In order to use the result above in the equation for $\Psi$, we need to show that $\epsilon/c_\epsilon^2$ is bounded uniformly.  We do this below.  This will also be a crucial Lemma when we eventually show that the front speed, $c_\epsilon$, is bounded away from 0 uniformly in $\epsilon$.

%%%%%%%
% Lower bound on c_\epsilon^2/\epsilon
%%%%%%%
\begin{lem}\label{weak_lower_bound_front_speed}
There exists a universal constant $C>0$ such that
$$\frac{c_\epsilon^2}{\epsilon} \geq C$$
\end{lem}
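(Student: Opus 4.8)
The plan is to close a self-improving inequality by playing the universal reaction lower bound \eqref{reaction_lower} against two upper bounds, one for the total reaction $\int f(T^m)$ and one for $\int (T^m_s)^2$, both controlled by $c_\epsilon$ with the latter carrying a factor $\epsilon^{-1}$. The key observation is that \eqref{reaction_lower} is a \emph{universal} lower bound on the product $\big(\int f(T^m)\big)\big(\int (T^m_s)^2\big)$; so if both factors are bounded above by expressions essentially linear in $c_\epsilon$, with one of them proportional to $\epsilon^{-1}$, then the product is at most a universal constant times $c_\epsilon^2/\epsilon$, which forces $c_\epsilon^2/\epsilon$ to be bounded below.

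First I would record the two upper bounds on the infinite cylinder $\R\times\Omega_p$. The reaction-rate identity \eqref{reaction_rate} gives $\int_{\R\times\Omega_p} f(T^m)\,dxdzds = c_\epsilon|\Omega_p|\theta_-$, and since $0 \leq \theta_- \leq 1$ this yields $\int f(T^m) \leq c_\epsilon|\Omega_p|$. For the gradient term I would reuse the energy identity established in the proof of Lemma \ref{temp_grad_bd}, namely
\[
\epsilon\int_{\R\times\Omega_p}(T^m_s)^2\,dxdzds + \int_{\R\times\Omega_p}|\tn T^m|^2\,dxdzds \leq c_\epsilon|\Omega_p|\Big(\theta_- - \frac{\theta_-^2}{2}\Big),
\]
which, after discarding the nonnegative $|\tn T^m|^2$ term and using $\theta_- - \theta_-^2/2 \leq 1$, produces the crucial $\epsilon$-dependence
\[
\int_{\R\times\Omega_p}(T^m_s)^2\,dxdzds \leq \frac{c_\epsilon|\Omega_p|}{\epsilon}.
\]
Both estimates are free of any $\delta$-dependence, since the advective term integrates to zero by incompressibility (after multiplying by $T^m$) and the convolution never enters these identities.

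Then I would combine these with the universal product bound \eqref{reaction_lower}. Taking the limit $a_n \to \infty$ of \eqref{reaction_lower}, I obtain a universal constant $C>0$ with $\big(\int f(T^m)\big)\big(\int (T^m_s)^2\big) \geq C$ on $\R\times\Omega_p$. Substituting the two upper bounds gives
\[
C \leq \big(c_\epsilon|\Omega_p|\big)\cdot\frac{c_\epsilon|\Omega_p|}{\epsilon} = \frac{c_\epsilon^2|\Omega_p|^2}{\epsilon},
\]
so that $c_\epsilon^2/\epsilon \geq C/|\Omega_p|^2$, which is the claimed universal bound.

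The main obstacle, and the only step requiring genuine care, is the passage of \eqref{reaction_lower} to the infinite cylinder: I must ensure that $\int_{R_{a_n}} f(T^m_n)$ and $\int_{R_{a_n}} (T^m_{n,s})^2$ actually \emph{converge} to their infinite-domain counterparts rather than merely satisfying one-sided semicontinuity, since I need the product \emph{lower} bound to survive in the limit. The right end is handled by the uniform exponential tail of Corollary \ref{reg_right_limit} (which also controls $T^m_{n,s}$ through parabolic regularity), while on the left the uniform $L^2$ control of $T^m_s$ prevents escape of mass; alternatively one could rerun the two-level barrier argument of Lemma \ref{burning_lower_bd} directly on $\R\times\Omega_p$. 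Everything else is algebra, and it is worth emphasizing that the resulting constant $C/|\Omega_p|^2$ depends neither on $\epsilon$ nor on $\delta$, exactly as required to feed into Lemma \ref{2d_schauder_trick} for the stream function.
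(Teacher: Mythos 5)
Your overall strategy is exactly the one the paper uses: play the universal lower bound \eqref{reaction_lower} on the product $\bigl(\int f\bigr)\bigl(\int (T^m_s)^2\bigr)$ against an upper bound $\int f \lesssim c_\epsilon$ and an upper bound $\int (T^m_s)^2 \lesssim c_\epsilon/\epsilon$, and conclude $c_\epsilon^2/\epsilon \gtrsim 1$. However, there is a genuine gap at the step you yourself flag as delicate: transferring \eqref{reaction_lower} from $R_{a_n}$ to $\R\times\Omega_p$. Local convergence plus Fatou only gives $\int_{\R\times\Omega_p} f(T^m) \leq \liminf_n \int_{R_{a_n}} f(T^m_n)$ and likewise for $(T^m_s)^2$, which is the wrong direction for preserving a \emph{lower} bound on the product; you must exclude escape of mass of \emph{both} integrands at \emph{both} ends. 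Neither of your two proposed fixes closes this. The claim that ``uniform $L^2$ control of $T^m_s$ prevents escape of mass on the left'' is not an argument: a fixed amount of $L^2$ mass (e.g.\ the transition layer near $s=-a_n$, where $T^m_n$ must descend from the imposed boundary value $1$) can translate to $-\infty$ and vanish from every local limit while the global integrals stay bounded. The alternative of rerunning the two-level argument of Lemma \ref{burning_lower_bd} directly on $\R\times\Omega_p$ also fails: that argument needs $T^m$ to attain both levels $r_2 > r_1 > \theta_0$ along $(-\infty,0)\times\{(x_0,z_0)\}$, which on the finite domain is guaranteed by $T^m_n(-a_n,\cdot)=1$, but on the infinite cylinder the left limit $\theta_-$ is at this stage only known to be positive (Lemma \ref{reg_end_limits}), not $\geq r_2$, so the crossing times $s_1, s_2$ need not exist.

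The repair is to not pass to the limit at all, which is what the paper does: keep every quantity on $R_{a_n}$. There, \eqref{reaction_lower} holds as proved; multiplying the equation by $(T^m_n-1)$ and integrating over $R_{a_n}$ gives $\epsilon\int_{R_{a_n}}(T^m_{n,s})^2 \leq \tfrac{c_n|\Omega_p|}{2} - (1+\epsilon)\int_{\Omega_p}(T^m_n)_s(a_n,\cdot)$, and integrating the equation itself gives $\int_{R_{a_n}} f(T^m_n) = c_n|\Omega_p| + (1+\epsilon)\int_{\Omega_p}\bigl[(T^m_n)_s(-a_n,\cdot)-(T^m_n)_s(a_n,\cdot)\bigr]$; the boundary term at $-a_n$ has a favorable sign and the one at $a_n$ tends to zero by Lemma \ref{bdry_deriv_dies}. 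Combining these finite-domain bounds with \eqref{reaction_lower} and $c_n \leq \tfrac{3}{2}c_\epsilon$ yields $c_\epsilon^2/\epsilon \geq C$ with no limit interchange needed. Your identification of the two upper bounds and of the role of the $\epsilon^{-1}$ factor is correct; only the venue (infinite versus finite cylinder) needs to change.
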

%%%%%%%%%%%%
\begin{proof}
To begin, assume $n$ is large enough so that 
\[
\frac{c_\epsilon}{2} \leq c_{n, \epsilon} \leq \frac{3c_\epsilon}{2}.
\]
Recall from Lemma \ref{pos_front_speed} that for each $n$,
$$\left(\int f(T_n^{m,\epsilon}) dxdzds\right)\left(\int |(T_n^{m,\epsilon})_s|^2 dxdzds\right) \geq C.$$
Multiplying equation \eqref{eq_finiteprob1} by $(T^m_n - 1)$, integrating over $R_{a_n}$, and using that $(T^m_n -1)f(T^m_n) \leq 0$, we obtain
$$\epsilon \int |(T_n^{m,\epsilon})_s|^2dxdzds \leq \frac{c_\epsilon|\Omega_p|}{2} - (1+\epsilon)\int_{\Omega_p} (T^{m,\epsilon}_n)_s(a_n,x,z)dxdz.$$
Since, by Lemma \ref{bdry_deriv_dies}, the second term here tends to zero uniformly, then by choosing $n$ large enough, we can combine the above equations to get
$$\frac{c_\epsilon}{\epsilon} \int f(T_n^{m,\epsilon})dxdzds \geq C.$$
To get a bound on the reaction term, we simply take  the equation for $T_n^{m,\epsilon}$ in \eqref{eq_finiteprob1} and integrate it to get
$$c_{\epsilon,n}|\Omega_p| + (1+\epsilon)\int_{\Omega_p} \left((T_n^{m,\epsilon})_s(-a_n,x,z) - (T_n^{m,\epsilon})_s(a_n,x,z)\right)dxdz = \int f(T_n^{m,\epsilon})dxdzds.$$
The first integral has two terms in it.  One is non-positive and the other tends uniformly to zero as $n$ tends to zero by Lemma \ref{bdry_deriv_dies}.  Hence we combine this with the above equation to obtain the desired inequality.
\end{proof}

Because of the degeneracy in the equation for $\Psi$, it will be convenient for us to use partial Sobolev norms in the following lemma.  We define these below.

\begin{defn}
  If $U\subset \R\times\R^n$ is a simply connected domain with a
  Lipshitz boundary, then a function $f$ is in the space $X^{i,j}(U)
  \subset L^2(U)$ if for every multi-index $\beta$, $f$ satisfies
\[
\int_U |\partial_{t}^{\beta_0}\partial_{x_1}^{\beta_1}\cdots\partial_{x_{n}}^{\beta_{n}} f(x)|^2 dx < \infty,
\]
where $\beta_0 \leq i$ and $|\beta| \leq j$.  We
endow the space with the norm
\[
\|f\|_{X^{i,j}(U)}^2 = \sum_{\substack{|\beta|\leq j,\\\beta_1 \leq i}}\int_U |\partial_{x_1}^{\beta_1}\cdots\partial_{x_{n+1}}^{\beta_{n+1}} f(x)|^2 dx.
\]
\end{defn}

%%%%%%%%
% BOUNDS ON L2 NORM AND HOLDER NORM OF U
%%%%%%%%
\begin{lem}\label{flow_bounds}
  For every compact set $K \subset \Omega$, there is a constant
  $C=C(\diam(K))>0$ that does not depend on $\epsilon$ such that
\begin{equation}\label{flow_sobolev_bd}
	\int_{\R\times K} |\nabla u^\epsilon|^2 dtdxdz + \int_{\R\times K} |u_t^\epsilon|^2 dtdxdz + \int_{\R\times K} |\nabla \Psi^\epsilon|^2 dtdxdz + \int_{\R\times K} |\Psi_t^\epsilon|^2 dtdxdz
		\leq C_K.
\end{equation}
In addition, there is a constant $C_\delta>0$, depending only on
$\delta$, such that the following bounds hold on $\R\times\Omega$:
$$\|u\|_{L^\infty} \leq C_\delta.$$
\end{lem}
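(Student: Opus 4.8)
The plan is to pass to the stationary frame, where the degenerate operator $L_\epsilon$ becomes a genuinely (though non-uniformly) elliptic operator, and then to run cross-sectional energy estimates for the Sobolev part and the degenerate Schauder estimate of Lemma~\ref{2d_schauder_trick} for the $L^\infty$ part. Writing $s=x-c_\epsilon t$, the identities $\partial_s\leftrightarrow -c_\epsilon^{-1}\partial_t$ and $\partial_s+\partial_x\leftrightarrow\partial_x$ turn $L_\epsilon=(\partial_s+\partial_x)^2+\partial_z^2+\epsilon\partial_s^2$ into $\Delta_{x,z}+\beta^2\partial_t^2$, where $\beta^2=\epsilon/c_\epsilon^2$; Lemma~\ref{pos_front_speed} guarantees $c_\epsilon>0$, so this change of variables is legitimate. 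Thus $\Psi^\epsilon$ solves $\beta^2\Psi^\epsilon_{tt}+\Delta_{x,z}\Psi^\epsilon=\tilde{\omega}^\epsilon$ on $\R\times\Omega_p$, with $\Psi^\epsilon=0$ on $\R\times B$, periodic on $P$, and $\Psi^\epsilon\to0$ as $t\to\pm\infty$. The crucial structural input is that $\beta^2=\epsilon/c_\epsilon^2\le \beta_0^2$ uniformly in $\epsilon$, by Lemma~\ref{weak_lower_bound_front_speed}; this is what prevents the estimates from degenerating. I would also record at the outset that $\omega^\epsilon$ and $\omega_t^\epsilon$ lie in $L^2(\R\times\Omega_p)$ uniformly in $\epsilon$ (Lemma~\ref{vort_grad_bd} together with Poincar\'e in the cross-section, using $\omega^\epsilon=0$ on $B$), and hence so do $\tilde{\omega}^\epsilon$ and $\tilde{\omega}^\epsilon_t$, since convolution with the fixed $L^1$-normalized kernel is bounded on $L^2$; it also maps $L^2$ into $C^{0,\alpha}$ with a $\delta$-dependent constant.

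For the Sobolev bound I work one period at a time on $\R\times\Omega_p$, extending to $\R\times K$ by covering $K$ with finitely many translates of $\Omega_p$ and invoking periodicity, exactly as in Lemma~\ref{temp_grad_bd}. Testing the $\Psi^\epsilon$-equation against $\Psi^\epsilon$ on $[-N,N]\times\Omega_p$ and integrating by parts (the boundary terms on $B$ and $P$ vanish, and those at $t=\pm N$ tend to $0$ by the uniform decay of $\Psi^\epsilon$ and its derivatives) gives $\|\nabla_{x,z}\Psi^\epsilon\|_{L^2}^2+\beta^2\|\Psi^\epsilon_t\|_{L^2}^2\le \|\Psi^\epsilon\|_{L^2}\|\tilde{\omega}^\epsilon\|_{L^2}$; Poincar\'e in the cross-section absorbs $\|\Psi^\epsilon\|_{L^2}$ and yields $\|\nabla_{x,z}\Psi^\epsilon\|_{L^2}\le C\|\tilde{\omega}^\epsilon\|_{L^2}$ uniformly in $\epsilon$ and $N$. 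Differentiating the equation in $t$ and testing against $\Psi_t^\epsilon$ the same way gives $\|\nabla_{x,z}\Psi_t^\epsilon\|_{L^2}\le C\|\tilde{\omega}_t^\epsilon\|_{L^2}$ and $\beta\|\Psi_{tt}^\epsilon\|_{L^2}\le C$; since $u^\epsilon=(\Psi_z^\epsilon,\pm\Psi_x^\epsilon)$, the first of these is exactly the bound on $\int|u_t^\epsilon|^2$, and a further application of Poincar\'e controls $\|\Psi_t^\epsilon\|_{L^2}$. Finally, rewriting the equation as $\Delta_{x,z}\Psi^\epsilon=\tilde{\omega}^\epsilon-\beta^2\Psi_{tt}^\epsilon$ and noting $\|\beta^2\Psi_{tt}^\epsilon\|_{L^2}=\beta\,\|\beta\Psi_{tt}^\epsilon\|_{L^2}\le \beta_0 C$, the right-hand side is bounded in $L^2$ uniformly, so cross-sectional elliptic regularity (applied for a.e.\ $t$ and integrated, using the smoothness of $B$) bounds $\|\nabla^2_{x,z}\Psi^\epsilon\|_{L^2}=\|\nabla u^\epsilon\|_{L^2}$. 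This closes \eqref{flow_sobolev_bd}.

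For the $L^\infty$ bound I first note $\|\tilde{\omega}^\epsilon\|_{C^{0,\alpha}(\R\times\Omega_p)}\le C_\delta$ from the mollification estimate above. Since the symbol of $\beta^2\partial_t^2+\Delta_{x,z}$ is nonnegative, the operator obeys the weak maximum principle; comparing $\Psi^\epsilon$ with a $t$-independent barrier $w(z)$ satisfying $\Delta_{x,z}w=-\|\tilde{\omega}^\epsilon\|_{L^\infty}$ and $w\ge0$ on $\overline{\Omega_p}$ (available because the cross-section is bounded in $z$), and using $\Psi^\epsilon=0$ on $B$ together with $\Psi^\epsilon\to0$ as $t\to\pm\infty$, yields $\|\Psi^\epsilon\|_{L^\infty}\le C_\delta$. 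Now Lemma~\ref{2d_schauder_trick}, applicable since $\beta\le\beta_0$ by Lemma~\ref{weak_lower_bound_front_speed}, gives for every $t$ that $\|\Psi^\epsilon(t,\cdot)\|_{C^{2,\alpha}(\Omega_p)}\le C(\|\Psi^\epsilon\|_{C^0}+\|\tilde{\omega}^\epsilon\|_{C^{0,\alpha}})\le C_\delta$; in particular $\|\nabla_{x,z}\Psi^\epsilon\|_{L^\infty}=\|u^\epsilon\|_{L^\infty}\le C_\delta$ uniformly in $t$ and $\epsilon$, which after unwinding the change of variables is the asserted bound $\|u\|_{L^\infty}\le C_\delta$ on $\R\times\Omega$. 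One should check that Lemma~\ref{2d_schauder_trick} survives the mixed Dirichlet-on-$B$/periodic-on-$P$ conditions, but since the $x$-direction is periodic the only genuine boundary is $B$ and the Schauder estimate there is unchanged.

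The main obstacle is second-derivative control uniform in $\epsilon$: naively the $t$-derivatives carry powers of $\beta$ that vanish as $\epsilon\to0$. The two ingredients that rescue this are Lemma~\ref{weak_lower_bound_front_speed} (which keeps $\beta$ bounded) and the observation that $\beta^2\Psi_{tt}^\epsilon=\beta(\beta\Psi_{tt}^\epsilon)$ is controlled because the energy estimate produces $\beta\Psi_{tt}^\epsilon\in L^2$; treating this as an honest $L^2$ source for the cross-sectional Laplacian is precisely what lets the genuinely elliptic $(x,z)$-theory deliver $\nabla u^\epsilon$ with no $\epsilon$-dependence.
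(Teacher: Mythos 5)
Your proof is correct and follows essentially the same route as the paper: energy estimates obtained by testing the stream-function equation (and its $t$-derivative) against $\Psi^\epsilon$ and $\Psi^\epsilon_t$ for the Sobolev bounds, the rewriting $\Delta\Psi^\epsilon=\tilde\omega^\epsilon-(\epsilon/c_\epsilon^2)\Psi^\epsilon_{tt}$ with the source controlled in $L^2$ via the differentiated energy estimate and Lemma~\ref{weak_lower_bound_front_speed}, and finally Lemma~\ref{2d_schauder_trick} for the $L^\infty$ bound on $u$. The only (harmless) deviation is that you obtain $\|\Psi^\epsilon\|_{L^\infty}$ by a barrier/maximum-principle argument, whereas the paper deduces it from Sobolev embedding of its $X^{1,3}$ estimate.
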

\begin{proof}
  We first prove the Sobolev bounds.  Multiply equation
  \eqref{regularized_equation_infinite_domain} by $\Psi^m$ and
  integrate by parts.  The boundary terms vanish because $\Psi^m$ and
  its derivatives vanish at infinity.  We obtain
$$
\int_{\R\times \Omega_p} |\tn \Psi^m|^2dsdxdz
	\leq C \|\Psi^m\|_{L^2(\R\times\Omega_p)} \|\tilde\omega^m\|_{L^2(\R\times\Omega_p)}.
$$
Then we change variables and use the Poincar\'e inequality to finish
the bound on $\nabla\Psi^\epsilon$.
We get a similar estimate for $\nabla \Psi_t^\epsilon$ by differentiating
\eqref{regularized_equation_infinite_domain} in $s$ and then arguing
in the same manner as above, which gives us the bound on
$u_t^\epsilon$. 
The bound on $\Psi_t^\epsilon$ follows
then from the Poincar\'e inequality again.  Finally, we need to get the bound
on
$\nabla u^\epsilon$.  Note that the usual elliptic estimates, combined with our work above, give us that
\[
	\|\Psi\|_{H^3(\R\times K)} \leq C\frac{c_\epsilon^2}{\epsilon} \| \tilde\omega^m\|_{H^1(\R\times K)}.
\]
The constant, $C$, in the equation above is universal.  In particular, we have that $(\epsilon/c_\epsilon^2) \Psi_{tt}$ is bounded in $L^2$.  Since $\Psi$ satisfies
\[
	\Delta \Psi = \tilde\omega - \frac{\epsilon}{c_\epsilon^2} \Psi_{tt},
\]
we obtain, via the usual elliptic estimates in the spatial coordinates,
\[
\|\Psi\|_{X^{0,3}(\R\times K)}
	\leq C \left(\|\tilde\omega\|_{H^1(\R\times K)} + \left\|\frac{\epsilon}{c_\epsilon^2} \Psi_{tt} \right\|_{H^1(\R\times K)} \right)
	\leq C \|\tilde{\omega} \|_{H^1(\R\times K)}.
\]
The last bound comes from differentiating \eqref{regularized_equation_infinite_domain} in $s$, multiplying it by $\Psi_s$, integrating by parts, and using the Cauchy-Schwarz and Poincar\'e inequalities to obtain
\[
	\int_{\R\times \Omega_p} |\tn \Psi_s|^2 dxdzds \leq C \int_{\R\times \Omega_p} |\tilde\omega_s|^2 dxdzds.
\]
Changing variables and arguing as we did above gives us
\[
\|\Psi_t\|_{X^{0,2}(\R\times K)}
	\leq C \left(\|\tilde\omega_t\|_{L^2(\R\times K)} + \left\|\frac{\epsilon}{c_\epsilon^2} \Psi_{ttt} \right\|_{L^2(\R\times K)} \right)
	\leq C \|\tilde{\omega} \|_{H^1(\R\times K)}
\]
Combining this with our estimates above yields
\[
\|\Psi\|_{X^{1,3}(\R\times K)}
	\leq C \|\omega\|_{H^1(\R\times K)},
\]
where $C$ is a universal constant.

Notice that our work above gave us an $L^\infty$ bound on $\Psi$ by Theorem 2.2.6 from~\cite{KrylovSobolev}.  Moreover, $\Psi$ satisfies the equation
\[
	\frac{\epsilon}{c_\epsilon^2}\Psi_{tt} + \Delta \Psi = \tilde\omega.
\]
Hence, applying Lemma~\ref{2d_schauder_trick} and noting Lemma \ref{weak_lower_bound_front_speed} finishes the proof for us.

\end{proof}

Now that we have a bound for $u$ which is
independent of $\epsilon$, we may get an
upper bound on $c_{\epsilon,\delta}$ which is independent of
$\epsilon$.

%%%%%%%%
%%%%%%%%
% 	UPPER BOUND ON FRONT SPEED
%%%%%%%%

\begin{cor}\label{front_speed_upper_bound_independent_of_epsilon}
There exists a universal constant $C>0$ such that
\[
c_{\epsilon,\delta} \leq C(1 + \|u^{\epsilon,\delta}\|_{L^\infty}).
\]
\end{cor}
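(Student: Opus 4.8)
The plan is to obtain this bound by passing the upper bound of Lemma~\ref{front_speed} to the limit $a_n \to \infty$, the crucial observation being that the constant there does not depend on $\epsilon$. Indeed, inspecting the proof of Lemma~\ref{front_speed}, its upper bound reads $c \leq (1+\epsilon) + C\|u^m\|_{L^\infty} + M$, where $M = \sup_{T\in(0,1)} f(T)/T$ and $C = 1 + \|\nabla\psi_e/\inf\psi_e\|_{L^\infty}$ depend only on $f$ and on $\Omega_p$ through the principal eigenfunction $\psi_e$. Since $\epsilon$ is bounded, this yields a universal $C_0$ with $c_{n,\epsilon} \leq C_0\bigl(1 + \|u_n^m\|_{L^\infty(R_{a_n})}\bigr)$ for every $n$. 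Letting $n\to\infty$ and using $c_{n,\epsilon}\to c_\epsilon$, it remains only to show that the flow does not concentrate at the ends of the growing cylinder, i.e. that $\limsup_{n}\|u_n^m\|_{L^\infty(R_{a_n})} \leq \|u^{m,\epsilon}\|_{L^\infty(\R\times\Omega_p)}$.

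To establish this, I would fix $\eta>0$ and suppose, for contradiction, that there are points $(s_n,x_n,z_n)\in R_{a_n}$ with $|u_n^m(s_n,x_n,z_n)| > \|u^{m,\epsilon}\|_{L^\infty}+\eta$. Since $u_n^m \to u^{m,\epsilon}$ locally uniformly, the sequence $s_n$ cannot remain bounded, so along a subsequence either $s_n\to+\infty$ or $s_n\to-\infty$. The case $s_n\to+\infty$ is excluded immediately by Lemma~\ref{flow_tends_to_zero}, which forces $|u_n^m|\leq\mu<\eta$ for $s\geq R_\epsilon$ uniformly in $n$. The remaining case, $s_n\to-\infty$, is the heart of the matter, and I would treat it by the recentering scheme of Lemma~\ref{flow_tends_to_zero}: set $\Phi_n(s,\cdot) = T_n^m(s+s_n,\cdot)$, $U_n(s,\cdot)=u_n^m(s+s_n,\cdot)$, and so on, extract a limit $(\Phi,U)$ solving \eqref{recentered_flow2zero} either on $\R\times\Omega_p$ (if $a_n+s_n\to\infty$) or on a left half-cylinder (if $a_n+s_n$ stays bounded), and aim to conclude $U\equiv 0$, which contradicts $|U(0,\cdot)|\geq \|u^{m,\epsilon}\|_{L^\infty}+\eta>0$. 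As in Lemma~\ref{bdry_deriv_dies}, one would use that any such $L^2\cap C^{0,\alpha}$ limit decays to zero at $-\infty$ and that integrating the temperature equation forces the limits of $\Phi$ at $\pm\infty$ to agree, so that $\Phi$, and then $U$, is constant and zero.

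The main obstacle is precisely this left-end analysis. Unlike the right end, where the normalization \eqref{normalization} pins $T_n^m\leq\theta_0$ and the recentered equation is genuinely linear, the Dirichlet condition $T_n^m(-a_n,\cdot)=1$ can produce a boundary layer near $s=-a_n$ in which $T_n^m$ takes values in $(\theta_0,1)$, so that $f(\Phi)$ need not vanish and the equation for $\Phi$ is no longer linear. Ruling out a spurious flow in this layer---equivalently, showing that the recentered temperature limit is nonetheless forced to be constant---is the delicate step; I expect it to require combining the a priori decay of the flow at $-\infty$ with the restriction that $\int f(T^{m,\epsilon})<\infty$ (via \eqref{reaction_rate}) places on the admissible left state $\theta_-$. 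Once the left end is controlled, combining the three regions gives $\limsup_n\|u_n^m\|_{L^\infty(R_{a_n})}\leq\|u^{m,\epsilon}\|_{L^\infty}$ and hence the bound on $c_\epsilon$; together with the $\epsilon$-independent estimate $\|u^{m,\epsilon}\|_{L^\infty}\leq C_\delta$ from Lemma~\ref{flow_bounds}, this delivers the $\epsilon$-independent control of $c_{\epsilon,\delta}$ that the corollary is aimed at. (Should the layer flow resist sharp control, the same argument still yields $c_{\epsilon,\delta}\leq C(1+C_\delta)$, which is all that is needed downstream.)
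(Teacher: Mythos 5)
Your reduction is the right one---the upper bound of Lemma~\ref{front_speed} has constants depending only on $f$ and $\psi_e$, so the whole content of the corollary is replacing $\|u_n^m\|_{L^\infty(R_{a_n})}$ by a quantity controlled by $\|u^{m,\epsilon}\|_{L^\infty(\R\times\Omega_p)}$---but the step you flag as ``the heart of the matter'' is a genuine gap, not a technicality, and your sketch of it does not close. Near $s=-a_n$ the Dirichlet condition $T_n^m(-a_n,\cdot)=1$ forces the recentered temperature $\Phi$ to pass through the reaction zone, so integrating the recentered equation gives $c_\epsilon|\Omega_p|(\Phi^--\Phi^+)=\int f(\Phi)$ with a possibly \emph{nonzero} right side; $\Phi$ is then a nontrivial profile, $\hat e\cdot\tn\Phi\not\equiv 0$ sources the vorticity equation, and the chain ``$\Phi$ constant $\Rightarrow W\equiv 0\Rightarrow U\equiv 0$'' breaks at its first link. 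It is not even clear that $\limsup_n\|u_n^m\|_{L^\infty(R_{a_n})}\leq\|u^{m,\epsilon}\|_{L^\infty}$ is true. Your fallback does not rescue the argument either: the only $L^\infty$ bound on $u_n^m$ over all of $R_{a_n}$ available at this stage is the one from Proposition~\ref{finite_dom_bounds}, whose constant depends on $\epsilon$; the $\epsilon$-independent bound $\|u\|_{L^\infty}\leq C_\delta$ of Lemma~\ref{flow_bounds} is proved for the limit function on the infinite cylinder, not for $u_n^m$ uniformly in $n$.

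The paper sidesteps the left end entirely by shrinking the comparison domain rather than controlling the flow on all of $R_{a_n}$. Assuming without loss of generality that $c_\epsilon\geq 24$, it runs the supersolution argument of Lemma~\ref{front_speed} only on the truncated cylinder $[-n,n]\times\Omega_p$, where local uniform convergence gives $\|u_n^{m}\|_{L^\infty([-n,n]\times\Omega_p)}\leq 2\|u^{m,\epsilon}\|_{L^\infty}$ along a subsequence---no analysis of the boundary layer is needed. The price is a new boundary at $s=n$ where $T_n^m$ no longer vanishes; the comparison $\gamma_{A_0}(n,\cdot)\geq T_n^m(n,\cdot)$ is supplied by the exponential decay of Corollary~\ref{reg_right_limit} with $\alpha=3$ (admissible precisely because $c_\epsilon\geq 24$), whose rate beats the rate $1$ of the supersolution $A_0\psi_e e^{-(s+n)}$. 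The normalization \eqref{normalization} then forces $\theta_0\leq A_0e^{-n}$, a contradiction for large $n$ unless the speed condition fails, which yields $c_\epsilon\leq 2c_n\leq C\bigl(1+\|u^{m,\epsilon}\|_{L^\infty}\bigr)$. If you want to salvage your route, you would need to either carry out this truncation or prove a uniform-in-$n$, uniform-in-$\epsilon$ $L^\infty$ bound on $u_n^m$ up to the left boundary, neither of which your proposal currently contains.
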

\begin{proof}
We can assume without loss of generality that $c_\epsilon \geq 24$ for
otherwise there is nothing to prove. Choose $n$ large enough such that 
$c_{n,\epsilon} \geq c_\epsilon/2$.  After possibly passing to a
subsequence, we may assume that
$u_n^{m,\epsilon}$ satisfies 
\[
\|u_n^{m,\epsilon}\|_{L^\infty([-n,n]\times\Omega_p)} \leq 
2 \|u^{m,\epsilon} \|_{L^\infty}.
\]
Let $\gamma(s,x,z) = \psi_e(x,z) e^{- (s+n)}$ where $\psi_e$ to be the
principal 
eigenfunction of the operator 
\[
-\Delta + 2\partial_x,
\]
with the boundary conditions $\eta_1\psi_e - \eta \cdot\nabla \psi_e = 0$ on $B$ and periodic on the boundary $P$.  Letting $A_0 = \|\psi_e^{-1}\|_{\infty}$ and arguing as before, we have that $\gamma_{A_0}(s) \geq T^m_n$ on $[-n,n]\times \Omega_p$ as long as the following conditions hold
\begin{equation}\label{supersolution_conditions}
	\begin{split}
		c_n &\geq M + 2 + \|u^m_n\|_{L^\infty([-n,n]\times\Omega_p)}\left( 1 + \left\| \frac{\nabla\psi_e}{\psi_e}\right\|_{L^\infty{\Omega_p}}\right)\\
		A_0e^{-2 n} &\geq T_n^m(n,x,z).
	\end{split}
\end{equation}
Notice that the second equation holds for large enough $n$, by simply taking $\alpha = 3$ in Corollary \ref{reg_right_limit}.  Hence, if the first equation holds then we obtain
$$A_0e^{-n}\geq \gamma(0) \geq \max T_n^m(0, x,z) = \theta_0.$$
This is clearly not true for $n$ large.  Hence it must be that the first inequality in \eqref{supersolution_conditions} is false.  This leads to the inequality
$$
c_\epsilon \leq 2c_n \leq 2\left(M + 2 + 2\|u^m\|_{L^\infty(\R\times\Omega_p)}\left( 1 + \left\| \frac{\nabla\psi_e}{\psi_e}\right\|_{L^\infty{\Omega_p}}\right)\right).
$$
This finishes the proof.
\end{proof}

Finally, we get an $L^2$ bound on the time derivative of the temperature.

%%%%%%%%%
%%%%%%%%%
% BOUND ON GRADIENT OF T
%%%%%%%%%
\begin{lem}\label{temperature_time_derivative}
For every compact set $K \subset \Omega$, there is a constant $C_\delta=C(\delta,\diam(K))>0$, which depends only on $\delta$ and the diameter of $K$, such that 
\begin{equation}\label{sobolev_temp}
	\int_{\R \times K} (T^\epsilon_t)^2 \leq C_\delta.
\end{equation}
\end{lem}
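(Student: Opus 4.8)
The plan is to reduce the whole statement to a weighted $L^2$ bound on $T^m_s$ in the moving frame. Since $T^\epsilon(t,x,z)=T^m(x-c_\epsilon t,x,z)$ we have $T^\epsilon_t=-c_\epsilon T^m_s$, so $(T^\epsilon_t)^2=c_\epsilon^2(T^m_s)^2$. Changing variables $s=x-c_\epsilon t$ in the $t$-integral for each fixed $(x,z)$ removes one factor of $c_\epsilon$, and the $\ell$-periodicity of $T^m$ in $x$ lets me cover $K$ by $n=n(\diam K)$ period cells. Thus it suffices to establish the uniform-in-$\epsilon$ bound
\[
c_\epsilon\int_{\R\times\Omega_p}(T^m_s)^2\,ds\,dx\,dz\le C_\delta .
\]

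To obtain this I would multiply the first equation of \eqref{regularized_equation_infinite_domain} by $T^m_s$ and integrate over $\R\times\Omega_p$. The key point is that the diffusion term contributes nothing: integrating $\int(L_\epsilon T^m)T^m_s$ by parts and invoking the Neumann condition $\eta\cdot\tn T^m=0$ on $\R\times B$ (which annihilates the boundary term) reduces it to $-\tfrac12\int\partial_s|\tn T^m|^2-\tfrac{\epsilon}{2}\int\partial_s(T^m_s)^2$, both of which vanish because $\tn T^m$ and $T^m_s$ decay to $0$ as $s\to\pm\infty$ and $P$ is periodic; here I use that $\tn$ commutes with $\partial_s$, so $\tn T^m_s=\partial_s\tn T^m$. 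The reaction term telescopes as well: with $F(x,z,r)=\int_0^r f(x,z,\rho)\,d\rho$ one has $f(T^m)T^m_s=\partial_s F(x,z,T^m)$, so $\int f(T^m)T^m_s=-\int_{\Omega_p}F(x,z,\theta_-)\,dx\,dz$, which is bounded by a universal constant. What survives is
\[
c_\epsilon\int_{\R\times\Omega_p}(T^m_s)^2=\int_{\R\times\Omega_p}(u^m\cdot\tn T^m)\,T^m_s+O(1).
\]

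The advection term is where the estimate threatens to close circularly, and this is the crux. I would bound it with Cauchy--Schwarz and the uniform bound $\|u^m\|_{L^\infty}\le C_\delta$ from Lemma \ref{flow_bounds}, then apply Young's inequality with the weight $c_\epsilon$ to absorb half of the left-hand side:
\[
\left|\int(u^m\cdot\tn T^m)T^m_s\right|\le\frac{c_\epsilon}{2}\int(T^m_s)^2+\frac{C_\delta^2}{2c_\epsilon}\int|\tn T^m|^2 .
\]
The surviving term $\tfrac{C_\delta^2}{c_\epsilon}\int|\tn T^m|^2$ looks dangerous because of the $1/c_\epsilon$, but the computation in the proof of Lemma \ref{temp_grad_bd} (multiply by $T^m$, use \eqref{reaction_rate} and $T^m\le1$) gives the matching bound $\int_{\R\times\Omega_p}|\tn T^m|^2\le c_\epsilon|\Omega_p|(\theta_- -\theta_-^2/2)\le c_\epsilon|\Omega_p|$. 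The factors of $c_\epsilon$ cancel, yielding $c_\epsilon\int(T^m_s)^2\le C_\delta^2|\Omega_p|+O(1)$ uniformly in $\epsilon$; notably this needs \emph{no} lower bound on $c_\epsilon$.

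The main obstacle is exactly this apparent circularity in the advection term, resolved by the observation that $\int|\tn T^m|^2$ itself scales like $c_\epsilon$, so the $c_\epsilon$ in the denominator is exactly cancelled. A secondary point requiring care is the integration by parts for the degenerate operator $L_\epsilon$: one must check that $T^m_s$ inherits the homogeneous Neumann condition on $B$, which follows by differentiating $\eta\cdot\tn T^m=0$ in $s$ (legitimate since $B$ is independent of $s$), so that every boundary contribution vanishes.
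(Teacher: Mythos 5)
Your proposal is correct and follows essentially the same route as the paper: multiply the moving-frame equation by $T^m_s$, observe that the diffusion terms are total $s$-derivatives and the reaction term telescopes to $F$, split the advection term by Young's inequality with weight $c_\epsilon$ (the paper's choice $\alpha=\|u^m\|_{L^\infty}/c_\epsilon$ is exactly your split), and cancel the resulting $1/c_\epsilon$ against the bound $\int|\tn T^m|^2\le c_\epsilon|\Omega_p|(\theta_--\theta_-^2/2)$ from Lemma \ref{temp_grad_bd}. The only cosmetic difference is that the paper integrates over $[-A,A]\times\Omega_p$ and sends $A\to\infty$ rather than integrating over $\R\times\Omega_p$ directly.
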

\begin{proof}
Multiply equation \eqref{regularized_equation_infinite_domain} by $T_s^m$, integrate over $[-A,A]\times\Omega_p$, and integrate by parts to obtain
\begin{equation}\label{sobolev_temp1}
	\begin{split}
		-c_\epsilon \int_{[-A,A]\times\Omega_p} &(T_s^m)^2 dsdxdz
			- (1+\epsilon) \int_{[-A,A]\times\Omega_p} T_{ss}^mT_s^mdxdzds
			\\&+ \int_{[-A,A]\times\Omega_p} \nabla_{x,y} T^m \cdot \nabla_{x,y} T_s^m dxdzds
			+ \frac{1 + \epsilon}{2}\int_{\Omega_p} [(T_s^m)^2]_{-A}^A dxdz
			\\&+ \int_{[-A,A]\times\Omega_p} (u\cdot\tn T^m) T^m_s dxdzds
			= \int_{[-A,A]\times\Omega_p} T_s^m f(T)dxdzds.
	\end{split}
\end{equation}
First notice that, as usual, the boundary terms will tend to zero as $A$ tends to infinity.  The second and third terms will tend to zero as we take $A$ to infinity since we can write the integrands as $\partial_s (T_s^m)^2$ and $\partial_s |\tn T^m|^2$, respectively.  Hence, there is a function $\eta(A)$ which tends to zero as $A$ tends to infinity which bounds the second, third, and fourth term.  We write the last term as
$$\int_{[-A,A]\times\Omega_p} T_s^m f(T^m)dxdzds
	= \int_{\Omega_p} [F(T^m)]_{-A}^A dxdz$$
where
\[
F(t) = \int_0^t f(\tau)d\tau.
\]
Let $C$ be a uniform bound on $f(t)$.  Then, combining this with \eqref{sobolev_temp1}, we arrive at
\begin{equation}
	\begin{split}
		c_\epsilon \int_{[-A,A]\times\Omega_p} (T_s^m)^2 dsdxdz
			&\leq \eta(A)
			- \int_{[-A,A]\times\Omega_p} (u\cdot\tn T^m) T^m_s dxdzds
			- \int_{\Omega_p} [F(T^m)]_{-A}^Adxdz
			\\&\leq \eta(A)
			- \int_{[-A,A]\times\Omega_p} (u\cdot\tn T^m)T^m_s dxdzds
			+ C
			\\&\leq \eta(A)
			+ \int_{[-A,A]\times\Omega_p} \frac{\|u^m\|_{L^\infty}}{2} \left(\alpha |\tn T^m|^2 + \frac{(T^m_s)^2}{\alpha} \right) dxdzds
			+ C.
	\end{split}
\end{equation}
Here, we used the boundedness of $F$ and the bound on $u^m$ that is uniform in $\epsilon$.  The following holds for any choice of $\alpha$ which is positive.  If $u \equiv 0$ then the desired inequality holds by simply skipping the last step in the calculations above.  Otherwise, let $\alpha = \|u^m\|_{L^\infty}/c_\epsilon$.  Then, taking $A \to \infty$, we arrive at
\begin{equation*}
\frac{c_\epsilon}{2}\int_{\R\times\Omega_p} (T_s^m)^2dxdzds
	\leq C 
	+ \int_{\R\times\Omega_p} \frac{\|u\|_{\infty}^2}{2c_\epsilon} |\tn T^m|^2dxdzds.
\end{equation*}
From Lemma \ref{temp_grad_bd}, we get that
\begin{equation*}
	\int_{\R\times\Omega_p} (c_\epsilon T^m_s)^2 dxdzds
		\leq C_\delta c_\epsilon.
\end{equation*}
Changing variables and arguing as in Lemma \ref{temp_grad_bd}, finishes the proof.
\end{proof}

In order to take the limit as $\epsilon$ tends to zero we need a lower
bound on the front speed $c_\epsilon$.  We obtain that here, in the
next lemmas.  Recall that we have already eliminated the troublesome case
where $c_\epsilon$ tends to zero faster than $\sqrt{\epsilon}$ in Lemma \ref{weak_lower_bound_front_speed}.

\subsubsection*{A lower bound on the front speed}

Now we will complete the proof that $c_\epsilon$ is uniformly bounded away from zero as $\epsilon$ tends to zero.

%%%%%%
%%%%%%
%%%%%%
\begin{prop}\label{pos_front_speed_unregularized}
There exists a constant $C_\delta>0$ that depends only on $\delta>0$
so  that $c_\epsilon>C_\delta$ for all $\eps$ sufficiently small. 
%
%$$0 < \liminf_{\epsilon\to0} c_\epsilon.$$
%
\end{prop}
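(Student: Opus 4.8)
The plan is to argue by contradiction: suppose that along some sequence $\epsilon_k \downarrow 0$ we have $c_{\epsilon_k} \to 0$, and derive a contradiction with the fact that every solution must carry a fixed amount of reaction. The starting point is the energy identity behind Lemma~\ref{temp_grad_bd}: multiplying \eqref{regularized_equation_infinite_domain} by $T^m$ and inserting \eqref{reaction_rate} gives
\[
	\epsilon\int_{\R\times\Omega_p}(T^m_s)^2\,dxdzds + \int_{\R\times\Omega_p}|\tn T^m|^2\,dxdzds \leq c_\epsilon|\Omega_p|\Big(\theta_- - \tfrac{\theta_-^2}{2}\Big) \leq \tfrac{1}{2}c_\epsilon|\Omega_p|.
\]
Hence $\int_{\R\times\Omega_p}|\tn T^{m,\epsilon}|^2 \to 0$ as $c_\epsilon \to 0$: the diagonal gradient $\tn T^m = ((\partial_s+\partial_x)T^m,\ \partial_z T^m)$ collapses. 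I would then pass to a limit of the profiles $T^{m,\epsilon_k}$ and show that the limit is too flat to sit at an ignited level, contradicting the reaction structure.

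To keep the reaction zone in view I would recenter. Since $\int f(T^{m,\epsilon})>0$ (Lemma~\ref{reg_end_limits}) while the right limit is $0$, and since the finite-domain profiles cross the level $r_2$ by the argument of Lemma~\ref{burning_lower_bd} (recall $T^m(-a_n)=1$ while $\max_{s\ge0}T^m=\theta_0<r_1<r_2$ by \eqref{normalization}), the infinite-cylinder profile $T^{m,\epsilon}$ attains the value $r_2$ at some point $(s_\epsilon,x_\epsilon,z_\epsilon)$ with $(x_\epsilon,z_\epsilon)\in\overline{\Omega_p}$; indeed, if the finite-domain crossings escaped to $s=-\infty$ then necessarily $\theta_-\ge r_2$ and a crossing persists anyway. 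I set $\hat T^\epsilon(s,x,z)=T^{m,\epsilon}(s+s_\epsilon,x,z)$, so that $\hat T^\epsilon(0,x_\epsilon,z_\epsilon)=r_2$ while $\int_{\R\times\Omega_p}|\tn\hat T^\epsilon|^2\le \tfrac12 c_\epsilon|\Omega_p|\to0$, the energy being translation invariant.

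Passing to the limit, I would extract (along a subsequence, using $(x_\epsilon,z_\epsilon)\to(x_0,z_0)$ by compactness of $\overline{\Omega_p}$) a limit $\hat T^\epsilon\to\hat T_\infty$ with $\tn\hat T_\infty\equiv0$, so that $\hat T_\infty$ is independent of $z$ and constant along the characteristics $x-s=\text{const}$, i.e. $\hat T_\infty=g(x-s)$ with $g$ forced to be $\ell$-periodic by the periodicity in $x$. The limiting equation is $u_\infty\cdot\tn\hat T_\infty = L\hat T_\infty + f(\hat T_\infty)$, and since $\tn\hat T_\infty\equiv0$ makes both $u_\infty\cdot\tn\hat T_\infty=0$ and $L\hat T_\infty=\tn\cdot\tn\hat T_\infty=0$, we obtain $f(\hat T_\infty)\equiv0$, hence $\hat T_\infty\le\theta_0$ everywhere. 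But the centering forces $\hat T_\infty(0,x_0,z_0)=r_2>\theta_0$, a contradiction. Therefore no sequence $\epsilon_k\downarrow0$ with $c_{\epsilon_k}\to0$ can exist, which is exactly the assertion $c_\epsilon>C_\delta$ for all $\epsilon$ sufficiently small; the constant depends only on $\delta$ because every estimate invoked (the bound $\|u\|_{L^\infty}\le C_\delta$ of Lemma~\ref{flow_bounds} and $\epsilon/c_\epsilon^2=O(1)$ of Lemma~\ref{weak_lower_bound_front_speed}) does.

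The crux, and the step I expect to be the main obstacle, is justifying that the recentered profiles converge \emph{locally uniformly}, so that the pointwise value $r_2$ at the origin survives in the limit rather than being lost on a null set. This is delicate precisely because the smallness $\int|\tn T^m|^2\to0$ lives in the moving frame, where $L_\epsilon$ degenerates as $\epsilon\to0$ and no $\epsilon$-uniform modulus of continuity in $s$ is available directly, whereas equicontinuity is natural only in the stationary frame, where $T^\epsilon$ solves the uniformly parabolic equation $T_t+u\cdot\nabla T=\Delta T+\tfrac{\epsilon}{c_\epsilon^2}T_{tt}+f(T)$ with coefficient $\epsilon/c_\epsilon^2$ bounded by Lemma~\ref{weak_lower_bound_front_speed} and with $\|u\|_{L^\infty}\le C_\delta$ by Lemma~\ref{flow_bounds}. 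Reconciling these two frames—transferring a parabolic local modulus of continuity into an $\epsilon$-independent equicontinuity for $T^m$ near the recentered $r_2$-crossing despite the vanishing ellipticity in the diagonal direction—is where the real work lies. The absence of a monotonicity structure (unlike in~\cite{BH_Period}) is exactly what prevents tracking a single level set by hand and forces the compactness-based location of the crossing used above.
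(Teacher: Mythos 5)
Your proposal takes a genuinely different route from the paper --- the paper never uses the collapse of $\int|\tn T^{m}|^2$; instead it recenters via a \emph{cross-sectional average} condition \eqref{front_speed_positive_recenter}, passes to the stationary frame, and derives the contradiction from the fact that as $c_\epsilon\to 0$ the temporal period $\ell/c_\epsilon$ blows up, so the per-period bounds \eqref{gradient_bounds_eps}--\eqref{reaction_bound_eps} become \emph{global} bounds $\int_{\R}\int_{\Omega^{b_0}}[f(\Phi)+|\nabla\Phi|^2+|\Phi_t|^2]\le C$ for the limit; combined with a uniform $C^{1,\alpha}$ bound (available because $\Phi^\epsilon$ solves the uniformly elliptic/parabolic equation $\Phi_t+U\cdot\nabla\Phi=\Delta\Phi+\frac{\epsilon}{c_\epsilon^2}\Phi_{tt}+f(\Phi)$ with $\epsilon/c_\epsilon^2$ bounded), this forces $f(\Phi)$ to be small outside a compact set, whence by connectivity $\Phi$ is near $1$ or near $\theta_0$ there, and the maximum principle then contradicts the average value $\frac{1+\theta_0}{2}$ on the cross-section $x=0$. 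However, your argument as written has a genuine gap, and it is exactly the one you flag at the end: the contradiction you aim for is between $f(\hat T_\infty)\equiv 0$ a.e.\ (hence $\hat T_\infty\in[0,\theta_0]\cup\{1\}$ a.e.) and the \emph{pointwise} value $\hat T_\infty(0,x_0,z_0)=r_2$. The only $\epsilon$-uniform modulus of continuity available is in the stationary frame; transported to the moving frame it controls $T^m$ only along the diagonal direction $(1,1,0)$ and in $z$ (increments transverse to the diagonal cost a factor $c_\epsilon^{-\alpha}$, which blows up). Since $\tn T^m\to 0$ in $L^2$ kills exactly those same two good directions, the limit $g(x-s)$ carries no information whatsoever in the one remaining (transverse) direction: it is a merely measurable, $\ell$-periodic function known only up to null sets, and the value $r_2$ at a single point does not survive the passage to the limit. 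Replacing the pointwise recentering by an average over a thin diagonal slab does not rescue the argument either: a periodic $g$ with values in $\{1\}\cup[0,\theta_0]$ a.e.\ can realize any prescribed slab average in $[\theta_0,1]$, so no contradiction with $f(g)=0$ a.e.\ results. (There is also a smaller, reparable inaccuracy earlier: if the finite-domain $r_2$-crossings escape to $s=-\infty$, it does not follow that $\theta_-\ge r_2$ for the un-recentered limit; the correct fix is to recenter the finite-domain solutions at their crossings \emph{before} sending $a\to\infty$, using the $a$-independent, $\epsilon$-dependent $C^{2,\alpha}$ bounds of Proposition \ref{finite_dom_bounds}.)

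The lesson to draw from the paper's proof is that every quantity used to pin down the transition must be one that survives the degenerate limit: integral cross-sectional normalizations rather than pointwise values, and regularity harvested in the stationary frame, where the family of operators stays uniformly elliptic or parabolic thanks to Lemma \ref{weak_lower_bound_front_speed}. Your observation that $\int|\tn T^m|^2\le \frac12 c_\epsilon|\Omega_p|\to 0$ is correct and is an interesting piece of structure, but by itself it only constrains the limit in the directions where you already have compactness, which is why the argument cannot close along this route without substantial additional input.
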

\begin{proof}
We will prove this by contradiction.  Assume that there is a sequence
$\epsilon_n \to 0$ such that $c_{\epsilon_n} \to 0$ as well.  For
simplicity, we drop the $n$ from the 
notation.  Notice that $\epsilon/c_\epsilon^2$ is uniformly bounded so it must converge, along a subsequence if necessary, to a constant $\kappa \geq 0$.  In order to come to a contradiction, we build new solutions to our equations as follows.  Let
\begin{equation}\label{front_speed_positive_recenter}
s_n = \sup\left\{s: \frac{1+\theta_0}{2}
	= \min_{r\leq s} \frac{1}{|\{(x,z)\in\Omega_p: x \in[0, c_\epsilon]\}|}\int_0^{c_\epsilon}\int_{\Omega_p(x)} T_n^m(r, x, z) dx dz\right\},
\end{equation}
where $\Omega_p(x) = \{z\in \R: (x,z) \in \Omega_p\}$, and define re-centered functions 
\[
\Phi_n^m(s,x,z) = T_n^m(s + s_n, x, z), ~~U_n(s,x,z) = u(s+s_n, x, z).
\]
These functions satisfy the same equations and bounds as before on 
$[-a_n - s_n, a_n - s_n]\times \Omega_p$ so we can take the limit as
$n$ 
tends to infinity.  We let 
\[
b_\epsilon = \lim_{n\to+\infty} (-a_n - s_n).
\] 
 
%  Either $b_\epsilon$ is $-\infty$ for a sequence of $\epsilon$ tending to zero, it is bounded uniformly in $\epsilon$, or there is a sequence of $\epsilon$ along which it is finite for each $\epsilon$ but such that it tends towards $-\infty$.

For each $\epsilon$, passing to the limit $n\to+\infty$ we also get solutions $\Phi_\epsilon^m, U_\epsilon^m$ to equation \eqref{regularized_equation_infinite_domain} on $(-b_\epsilon, \infty)\times \Omega_p$.  Changing variables to the stationary frame, we arrive at solutions $\Phi^\epsilon$, $U^\epsilon$, which solve the equation
\begin{equation*}
	\Phi^\epsilon_t + U^\epsilon \cdot \nabla \Phi^\epsilon = \Delta \Phi^\epsilon + \frac{\epsilon}{c_\epsilon^2} \Phi^\epsilon_{tt} + f(\Phi^\epsilon).
\end{equation*}
Let $\Omega(x) = \{z: (x,z)\in \Omega\}$.  Notice that our choice of $s_n$ in \eqref{front_speed_positive_recenter} gives us that
\begin{equation}\label{crosssection1_eps}
\frac{1}{\int_0^1|\Omega(c_\epsilon t)|dt}\int_0^1 \int_{\Omega(c_\epsilon t)} \Phi^{\epsilon}(t,c_\epsilon t,z) dzdt
	= \frac{1+\theta_0}{2}
\end{equation}
and that
\begin{equation}\label{crosssection2_eps}
\frac{1}{\int_0^1|\Omega(x_0 + c_\epsilon t)|dt}\int_0^1 \int_{\Omega(x_0 + c_\epsilon t)} \Phi^{\epsilon}(t,x_0 + c_\epsilon t,z) dzdt
	\geq \frac{1+\theta_0}{2}
\end{equation}
as long as $b_\epsilon \leq x_0 \leq 0$.  Since we have a uniform $H_{loc}^1$ bound on both functions, we can take $\epsilon$ to zero (along a subsequence if necessary) to get functions $\Phi$, $U$ which weakly solve the equation
\begin{equation}\label{apr902}
	\Phi_t + U\cdot \nabla \Phi = \Delta \Phi + \kappa \Phi_{tt}+ f(\Phi).
\end{equation}
Also, taking a subsequence if necessary, $b_\epsilon$ converges
as $\epsilon\to 0$ to either a finite number or~$-\infty$, and we set 
\[
b_0 = \lim_{\epsilon\to 0} b_\epsilon.
\]
Equation (\ref{apr902}) is posed on
the set $\Omega^{b_0} = \{(t,x,z)\in \R\times \Omega: x \geq b_0\}$.  
Also, by using the trace theorem, we get, from \eqref{crosssection1_eps} and \eqref{crosssection2_eps} that 
\begin{equation}\label{crosssection1}
\frac{1}{|\Omega(0)|}\int_0^1 \int_{\Omega(0)} \Phi^{\epsilon}(t,0,z) dzdt
	= \frac{1+\theta_0}{2}
\end{equation}
and that
\begin{equation}\label{crosssection2}
\frac{1}{|\Omega(x_0)|}\int_0^1 \int_{\Omega(x_0)} \Phi^{\epsilon}(t,x_0,z) dzdt
	\geq \frac{1+\theta_0}{2}
\end{equation}
as long as $b_0 \leq x_0 \leq 0$.  The last ingredient that we need is a global $L^2$ bound on the derivatives of some functions.  Looking at how we obtained the $L^2$ gradient bounds earlier in this section, notice that if we had changed variables in a different way, we have, for each $\epsilon$, bounds of the form
\begin{equation}\label{gradient_bounds_eps}
	\begin{split}
		\int_0^{\ell/c_\epsilon}\int_{\Omega^{b_\epsilon + c_\epsilon t}} |\Phi_t^\epsilon|^2 + |\nabla \Phi^\epsilon|^2 dxdzdt
			&= \int_{-\ell/c_\epsilon}^0\int_{\Omega^{b_\epsilon + c_\epsilon t}} |\Phi_t^\epsilon|^2 + |\nabla \Phi^\epsilon|^2 dxdzdt
			\leq C,\\
		\int_0^{\ell/c_\epsilon}\int_{\Omega^{b_\epsilon + c_\epsilon t}} |U_t^\epsilon|^2 + |\nabla U^\epsilon|^2 dxdzdt
			&= \int_{-\ell/c_\epsilon}^0\int_{\Omega^{b_\epsilon + c_\epsilon t}} |U_t^\epsilon|^2 + |\nabla U^\epsilon|^2 dxdzdt
			\leq C
	\end{split}
\end{equation}
Similarly, from the identity $\int_{\R\times\Omega_p} f(T^{m,\epsilon})dxdzds \leq Cc_\epsilon$, we arrive at
\begin{equation}\label{reaction_bound_eps}
	\int_0^{\ell/c_\epsilon}\int_{\Omega^{b_\epsilon + c_\epsilon t}} f(\Phi^\epsilon) dxdzdt
		= \int_{-\ell/c_\epsilon}^0\int_{\Omega^{b_\epsilon + c_\epsilon t}} |f(\Phi^\epsilon) dxdzdt
		\leq C.
\end{equation}
Now taking $\epsilon$ to zero in equations \eqref{gradient_bounds_eps} and \eqref{reaction_bound_eps} gives us the following bounds
\begin{equation}
	\int_{\R}\int_{\Omega^{b_0}} \left[f(\Phi) + |\nabla \Phi|^2 + |\Phi_t|^2 + |\nabla U|^2 + |U_t|^2 \right] dxdzdt \leq C.
\end{equation}

By parabolic or elliptic regularity (depending on whether $\kappa = 0$ or $\kappa >0$, respectively), it follows that $\Phi$ has a uniform $C^{1,\alpha}$ bound in both space and time, see e.g. ~\cite{GT, KrylovHolder, KrylovSobolev}.  Let $0< \nu < \frac{1-\theta_0}{2}$ and choose $\mu$ small enough that $f(T) < \mu$ implies that $T \leq \theta_0 + \nu$ or $T \geq 1 - \nu$.  Then take $R$ large enough such that if $|t| + |x| \geq R$, we have that $f(\Phi(t,x,z)) < \mu$.  This exists since $f(\Phi)$ satisfies a global $L^2$ bound and a global $C^{1,\alpha}$ bound.  Since $\{|t| + |x|\geq R\}$ is connected and since $f(\Phi)$ is continuous then either $\Phi(t,x,z) \geq 1 - \nu$ on $\{|t|+|x|\geq R\}$ or $\Phi(t,x,z) \leq \theta_0 + \nu$ on $\{|t|+|x|\geq R\}$.

We claim that the first possibility holds.  To see this, we look at the two different cases separately: either $b_0 = -\infty$ or $0 \leq -b_0 < \infty$.  In the first case, we use \eqref{crosssection2}.  In the second case we use the fact that $T(t, b_0, z) = 1$ for all $t\in\R$ and $z \in \Omega(b_0)$.

To review, we have that if $|t| + |x| \geq R$ then  
$T(t,x,z) \geq 1 - \nu$.  In addition,   there is some $t_0 \in (0,1)$ and $z_0 \in \Omega(0)$ such that $T(t_0, 0, z_0) = \frac{1+\theta_0}{2}$.  We claim that this leads to a contradiction.  Look at the domain $[-R,R]\times \{(x,z)\in \Omega^{b_0}: |x| \leq R\}$.  Then the Neumann boundary conditions on $T$ and the strong maximum principle (either for parabolic or elliptic equation, depending on whether $\kappa = 0$ or $\kappa >0$, respectively) tells us that $T(t,x,z) \geq 1-\nu$ for every $(t,x,z)$ in this domain.  This is a contradiction since $T(t_0, 0, z_0) =\frac{1+\theta_0}{2}$.
\end{proof}

A direct result of this lemma is that we can estimate $\tilde\omega$ in any Sobolev norm by the $L^2$ norm of $\omega$.  This allows us to conclude that $\Psi$ is smooth and hence, that its derivatives decay at infinity.  We will need this in the next section in order to justify integration by parts when we obtain new estimates on our functions and to get a bound on the $L^\infty$ norm of the $\nabla T$.

\begin{cor}\label{fluid_smoothness}
For every compact subset, $K$, of $\Omega$, the stream function, $\Psi$,is bounded in $H^k(\R\times K)$ for every $k$ by a constant depending only on $\delta$ and the diameter of $K$.
\end{cor}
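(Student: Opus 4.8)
The plan is to iterate the argument used to prove Lemma~\ref{flow_bounds}, now feeding in two facts that were unavailable there: the uniform lower bound on the front speed and the smoothing effect of the convolution. First I would record the two key inputs. On the one hand, Proposition~\ref{pos_front_speed_unregularized} gives $c_\eps \geq C_\delta$, so that together with $\eps < 1/2$ the degenerate coefficient satisfies $\eps/c_\eps^2 \leq 1/(2C_\delta^2) =: \beta_0^2$, a bound depending only on $\delta$. On the other hand, since $\tom$ is the convolution $\omega * \varphi_\delta$ and convolution with the fixed mollifier $\varphi_\delta \in C_c^\infty$ is bounded from $L^2$ into $H^k$ for every $k$, we have $\|\tom\|_{H^k(\R\times K)} \leq C(\delta,k,\diam K)\,\|\omega\|_{L^2(\R\times\Omega_p)}$; the $L^2$ norm on the right is bounded uniformly in $\eps$ by combining Lemma~\ref{vort_grad_bd} with the Poincar\'e inequality in $z$ (using $\omega=0$ on $B$). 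Thus $\tom$ is bounded in every Sobolev norm by a constant depending only on $\delta$ and $\diam K$, uniformly in $\eps$.

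With these in hand I would bootstrap exactly as in the proof of Lemma~\ref{flow_bounds}. Working in the moving frame, where $L_\eps \Psi^m = \tom^m$, differentiate this equation $j$ times in $s$, multiply by $\partial_s^j \Psi^m$, and integrate over $\R\times\Omega_p$; the boundary terms vanish since $\Psi^m$ and its derivatives vanish at infinity and on $B$ and are periodic on $P$. Integration by parts and the Poincar\'e inequality give, uniformly in $\eps$,
\[
\int_{\R\times\Omega_p}\!\big(|\tn\partial_s^j\Psi^m|^2 + \eps(\partial_s^{j+1}\Psi^m)^2\big)\,dxdzds \leq C\,\|\partial_s^j\tom^m\|_{L^2}^2 .
\]
Changing variables to the stationary frame (which turns $\eps\,\partial_s^2$ into $\tfrac{\eps}{c_\eps^2}\partial_t^2$ and removes the resulting powers of $c_\eps$ against the measure, just as in Lemma~\ref{flow_bounds}) and using $\eps/c_\eps^2 \leq \beta_0^2$ shows that the degenerate term $\tfrac{\eps}{c_\eps^2}\partial_t^2(\partial_t^j\Psi)$ is bounded in $L^2(\R\times K)$ by $C\|\partial_t^{j+1}\tom\|_{L^2}$, uniformly in $\eps$. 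Rewriting the stationary equation as $\Delta(\partial_t^j\Psi) = \partial_t^j\tom - \tfrac{\eps}{c_\eps^2}\partial_t^{j+2}\Psi$ and applying the spatial elliptic estimates on unit $x$-cells summed over $x$ as in~\cite{BCR} then upgrades the spatial regularity of each $\partial_t^j\Psi$ to every order. Iterating controls every partial Sobolev norm $X^{i,j}(\R\times K)$ of $\Psi$ by a constant depending only on $\delta$, $i$, $j$, and $\diam K$.

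The one remaining point, which I expect to be the main obstacle, is the $t$-degeneracy: the energy estimate only controls the top-order pure time derivative $\partial_t^{j+1}\Psi$ after multiplication by $\sqrt\eps/c_\eps$, so at fixed $\eps$ the clean statement is control of the bounded combination $\tfrac{\eps}{c_\eps^2}\partial_t^2$ rather than of $\partial_t^2$ alone. This is resolved in the limit $\eps\to 0$: recalling that $\eps/c_\eps^2 \to \kappa \geq 0$, the limiting stream equation is $\kappa\Psi_{tt} + \Delta\Psi = \tom$, which is either uniformly elliptic in $(t,x,z)$ (when $\kappa>0$) or reduces to the purely spatial elliptic equation $\Delta\Psi = \tom$ in which $t$ enters only as a smooth parameter (when $\kappa=0$), so that $\partial_t^j\Psi$ solves $\Delta(\partial_t^j\Psi)=\partial_t^j\tom$ with a right-hand side bounded in every Sobolev norm. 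In either case the full $H^k(\R\times K)$ bound follows from the standard elliptic estimates and the Sobolev bounds on $\tom$, with constant depending only on $\delta$ and $\diam K$. Finally, Sobolev embedding together with the global $L^2$ integrability established above yields that $\Psi$ and all of its derivatives decay at infinity, which is what will be used in the next section. The convolution smoothing and the uniform bound $\eps/c_\eps^2 \leq \beta_0^2$ do all the real work; the delicate part is only the bookkeeping of the mixed $X^{i,j}$ estimates and the passage to the limit in $\kappa$.
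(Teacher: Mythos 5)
Your first two paragraphs reproduce the paper's proof in all essentials: convolution smoothing gives $\|\tilde\omega\|_{H^k}\leq C_\delta\|\omega\|_{L^2}$ uniformly in $\epsilon$, the time-differentiated energy estimates control $\nabla\partial_t^j\Psi$ (hence, via Poincar\'e, $\partial_t^j\Psi$) in $L^2$, and the spatial elliptic estimates for $\Delta(\partial_t^j\Psi)=\partial_t^j\tilde\omega-\tfrac{\epsilon}{c_\epsilon^2}\partial_t^{j+2}\Psi$ close the $X^{i,j}$ bounds; the only cosmetic difference is that you control the error term using $\epsilon/c_\epsilon^2\leq\beta_0^2$ from Lemma \ref{weak_lower_bound_front_speed}, where the paper instead lets the factor $\epsilon/c_\epsilon^2$ cancel exactly against the $c_\epsilon^2/\epsilon$ in its global degenerate estimate for $\|\Psi\|_{H^{j+k+2}}$. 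Your final paragraph should be discarded: the pure time derivatives $\partial_t^j\Psi$ are already controlled in $L^2$ uniformly in $\epsilon$ by the level-$j$ energy estimate together with the Poincar\'e inequality (no $\sqrt{\epsilon}$ weight is needed for them), so since your $X^{i,j}$ iteration exhausts all mixed derivatives the result is already proved at fixed $\epsilon$ --- and deferring anything to the limit $\epsilon\to 0$ would be harmful, as the corollary must hold uniformly in $\epsilon$ (it is used that way in Lemma \ref{pinned_down_at_middle} to invoke the gradient bound of \cite{BH_Gradient}).
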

\begin{proof}
First notice that for any natural number $k$,
\[
	\|\tilde\omega\|_{H^k(\R\times K)} \leq C \|\omega\|_{L^2(\R\times K)}.
\]
The constant above depends only on $\delta$.  Then arguing as in Lemma \ref{flow_bounds} we see that
\[
\int_{\R\times K} |\nabla \partial_t^j \Psi|^2 dxdzdt
	\leq C\int_{\R\times K} |\partial_t^j \tilde\omega|^2 dxdzdt
\]
and
\[
	\|\Psi\|_{H^{k+2}(\R\times K)}
	\leq C \frac{c_\epsilon^2}{\epsilon} \left( \|\psi\|_{L^2(\R\times K)} + \|\tilde\omega\|_{H^k(\R\times K)}\right)
	\leq C \frac{c_\epsilon^2}{\epsilon} \|\omega\|_{L^2(\R\times K)}.
\]
Fix $j$ and $k$ and notice that $\partial_t^j \Psi$ satisfies
\[
\begin{split}
	\Delta \partial_t^j \Psi &= \partial_t^j\tilde\omega - \frac{\epsilon}{c_\epsilon^2} \partial_t^{j+2} \Psi ~~~\text{ on } \R\times \Omega\\
	\partial_t^j \Psi &= 0 ~~~\text{ on } \R\times \partial\Omega	
\end{split}
\]
Hence, the usual elliptic estimates tell us that
\[
\begin{split}
	\|\partial_t^j \Psi\|_{X^{0,k}}
	&\leq C\left( \|\partial_t^j \psi\|_{L^2(\R\times K)} + \|\partial_t^j\tilde\omega -   \frac{c_\epsilon^2}{\epsilon} \partial_t^{j+2}\Psi\|_{H^k(\R\times K)}\right)\\
	&\leq C\left(\|\tilde\omega\|_{H^{j+k}(\R\times K)} +   \frac{\epsilon}{c_\epsilon^2} \|\Psi\|_{H^{j+k+2}(\R\times K)}\right)\\
	&\leq C \|\omega\|_{L^2(\R\times K)}.
\end{split}
\]
The estimate of $\Psi$ is finished by noting that $\|\omega\|_{L^2(\R\times K)}$ is uniformly bounded in $\epsilon$.
\end{proof}

%%%%%%%%%%%%%%%%%%%%%%%%%%%%%%%%%%%%%%%%%%%%%%%
%%%%%%%%%%%%%%%%%%%%%%%%%%%%%%%%%%%%%%%%%%%%%%%
%%%%%%%%%%%%%%%%%%%%%%%%%%%%%%%%%%%%%%%%%%%%%%%
%%%%%%									  %%%%%
%%%%%%									  %%%%%
%%%%%% Solution of the Unregularized Eqn  %%%%%
%%%%%%									  %%%%%
%%%%%%									  %%%%%
%%%%%%%%%%%%%%%%%%%%%%%%%%%%%%%%%%%%%%%%%%%%%%%
%%%%%%%%%%%%%%%%%%%%%%%%%%%%%%%%%%%%%%%%%%%%%%%
%%%%%%%%%%%%%%%%%%%%%%%%%%%%%%%%%%%%%%%%%%%%%%%

\section{Solution of the Unregularized Equation}\label{solutions_of_the_unregularized_equation}

%%%%  NEEDS TO BE REWORDED!!!!

The results of the previous section allow us to take the limit as
$\epsilon$ tends to zero in all the relevant topologies.  Thus, we
arrive at $T_\delta \in H^1_{loc}(\R\times \Omega)$, $u_\delta\in
C^{0,\alpha}(\R\times \Omega) \cap H^1_{loc}(\R\times\Omega)$,
$\omega_\delta \in H^1_{loc}(\R\times\Omega)$, and $\Psi_\delta \in
H_{loc}^1(\R\times\Omega)$.  These satisfy the system
\begin{equation}\label{stationary_frame_eqn}
	\begin{split}
		(T_\delta)_t + u_\delta\cdot \nabla T_\delta - \Delta T_\delta &= f(T_\delta) \\
		(\omega_\delta)_t - \Delta \omega_\delta &= \hat{e}\cdot \nabla T_\delta \\
		\Delta \Psi_\delta &= \tilde\omega_\delta\\
		u_\delta &= \nabla^\perp \Psi_\delta
	\end{split}
\end{equation}
on $\R \times \Omega$ with boundary conditions
\begin{equation}\label{sationary_frame_eqn}
	\begin{split}
		\frac{\partial T_\delta}{\partial \eta} & = 0\\
		\Psi_\delta & = 0\\
		\omega_\delta & = 0 \\
		u_\delta \cdot \eta &= 0
	\end{split}
\end{equation}
on $\partial \Omega$.

The last remaining step is to pass to the limit $\delta\to 0$.
Before we do that, we will need to check a few properties for each fixed $\delta$. First, we
will check that, for each $\delta$, the functions we have obtained at
this point are pulsating fronts as in equation
\eqref{pulsating_front}.  This is necessary to eventually show that the
functions we obtain as we take $\delta$ to zero are pulsating fronts.  Then we will show that $T_\delta^m$ can be bounded by an exponential as in Corollary \ref{reg_right_limit}.  Finally, we will discuss some uniform in $\delta$ estimates on our functions.  This will allow us to take the limit as $\delta$ tends to zero.  Finally, we will finish the proof of Theorem \ref{the_theorem}.

Here we will show that the $T_\delta$ satisfies the normalization condition, as in equation \eqref{normalization}.  We will need this later, to show that this condition holds in the limit as $\delta$ tends to zero.

\begin{lem}\label{pinned_down_at_middle}
For $T_\delta$, as constructed above,
$$\max\{T_\delta(t,x,z) : (t,x,z) \in \R\times\Omega: x-ct \leq 0\} = \theta_0.$$
\end{lem}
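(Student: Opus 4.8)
The plan is to read this lemma as the statement that the limiting profile $T_\delta$ still obeys the normalization \eqref{normalization}, and to obtain it by pushing \eqref{normalization} through the limit $\epsilon\downarrow0$ at the fixed scale $\delta$. For every $\epsilon$ the infinite-cylinder profile $T^{m,\epsilon}$ satisfies \eqref{normalization}: it is the local uniform limit as $a\to\infty$ of the normalized finite-domain solutions, so $\max_{s\ge0,\,(x,z)\in\Omega_p}T^{m,\epsilon}(s,x,z)=\theta_0$. Undoing the moving-frame change of variables $T_\delta(t,x,z)=T^m_\delta(x-ct,x,z)$ turns this half-line maximum into the maximum over the half-space appearing in the statement, so it suffices to show that $\max T^m_\delta=\theta_0$ over that same half-line.

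I would split the equality into the two inequalities. The bound $\max T^m_\delta\le\theta_0$ is immediate: the $\epsilon$-uniform estimates of Section~\ref{solutions_on_the_infinite_cylinder}, together with interior and boundary parabolic/elliptic regularity, give $T^{m,\epsilon}\to T^m_\delta$ in $C^0_{loc}$, and since each $T^{m,\epsilon}\le\theta_0$ on the half-line this passes to the pointwise limit. The reverse inequality is where care is needed, because the maximum is taken over a non-compact set and a plain local limit cannot by itself transfer a supremum.

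To secure the lower bound I would first localize the maximizers uniformly in $\epsilon$. Using the $\epsilon$-independent lower bound $c_\epsilon\ge C_\delta$ of Proposition~\ref{pos_front_speed_unregularized} and the $\epsilon$-independent bound $\|u\|_{L^\infty}\le C_\delta$ of Lemma~\ref{flow_bounds}, one may fix a single exponent $\alpha$ and apply Corollary~\ref{reg_right_limit} with constants that do not depend on $\epsilon$; this yields a width $S_0$, independent of $\epsilon$, with $T^{m,\epsilon}<\theta_0$ once $s\ge S_0$. Hence every maximizer of $T^{m,\epsilon}$ on the half-line lies in the fixed compact slab $[0,S_0]\times\overline{\Omega_p}$. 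Choosing maximizers $(s_\epsilon,x_\epsilon,z_\epsilon)$ with $T^{m,\epsilon}(s_\epsilon,x_\epsilon,z_\epsilon)=\theta_0$, compactness lets me extract a convergent subsequence to some $(s_*,x_*,z_*)$ with $s_*\ge0$, and $C^0_{loc}$ convergence gives $T^m_\delta(s_*,x_*,z_*)=\theta_0$. Combined with the upper bound this yields $\max T^m_\delta=\theta_0$, which is exactly \eqref{normalization} for $T_\delta$ and hence the assertion after returning to stationary coordinates.

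The main obstacle is precisely this non-compactness: I expect the only genuine work to be checking that the slab width $S_0$ can be chosen independently of $\epsilon$, that is, that the constants entering Corollary~\ref{reg_right_limit} are uniform once $c_\epsilon$ is bounded below and $\|u\|_{L^\infty}$ above by $\delta$-constants. Without such a uniform tail one could only conclude $\max T^m_\delta\le\theta_0$, and the maximizing sequence could in principle escape to $s=+\infty$, where $T^m_\delta\to0$ by Lemma~\ref{reg_end_limits}; the exponential decay rules this out and pins the maximum at a finite, $\epsilon$-uniform location.
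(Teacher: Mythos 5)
Your overall strategy (localize the maximizers in a compact slab uniformly in $\epsilon$, then pass the value $\theta_0$ to the limit point) is plausible, but it rests on a claim that is false in this setting and that the paper's proof is specifically designed to circumvent: you assert that ``interior and boundary parabolic/elliptic regularity'' give $T^{m,\epsilon}\to T^m_\delta$ in $C^0_{loc}$. The equation for $T^{m,\epsilon}$ involves $L_\epsilon = L + \epsilon\partial_s^2$, whose ellipticity degenerates as $\epsilon\downarrow 0$; equivalently, in the stationary frame $T^\epsilon$ solves $T_t + u\cdot\nabla T = \Delta T + (\epsilon/c_\epsilon^2)T_{tt} + f(T)$, an elliptic equation in $(t,x,z)$ whose ellipticity ratio $c_\epsilon^2/\epsilon$ blows up as $\epsilon\to 0$ (recall $c_\epsilon$ is bounded \emph{below} by Proposition \ref{pos_front_speed_unregularized}). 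All H\"older/Schauder bounds from Section \ref{problem_in_a_finite_domain} carry constants that blow up as $\epsilon\to 0$ (Remark \ref{remark_about_finite_dom_bounds}), and the only estimates proved uniformly in $\epsilon$ in Section \ref{solutions_on_the_infinite_cylinder} are $L^2$/$H^1_{loc}$ bounds for $T$ plus $L^\infty$ bounds. So the convergence available as $\epsilon\to 0$ is weak $H^1_{loc}$ (hence a.e. along a subsequence), not locally uniform; the paper opens its proof by flagging exactly this obstruction. With only $H^1$ convergence, your extraction of limiting maximizers fails: $T^{m,\epsilon}(s_\epsilon,x_\epsilon,z_\epsilon)=\theta_0$ need not pass to the limit point. (The upper bound $T_\delta\le\theta_0$ is fine, since a pointwise inequality survives a.e.\ convergence; it is the attainment half that breaks.)

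The missing idea --- and the actual content of the paper's proof --- is a uniform-in-$\epsilon$ modulus of continuity obtained not from standard regularity but from the Berestycki--Hamel gradient estimate for elliptic regularizations~\cite{BH_Gradient}: because the mollified vorticity gives $u$ a $C^1$ bound depending only on $\delta$ (Corollary \ref{fluid_smoothness}), that result yields $\|\nabla_{x,z}T^\epsilon_\delta\|_{L^\infty}\le C(\delta)$ independently of $\epsilon$, and this, together with the trace theorem, produces a point with $x-ct\le 0$ at which $T_\delta=\theta_0$. This is precisely why the vorticity was smoothed by convolution in the first place. Note also that your other ingredient --- making the decay radius $R_\epsilon$ of Corollary \ref{reg_right_limit} uniform in $\epsilon$ --- is not automatic from the $\delta$-uniform bounds on $c_\epsilon$ and $\|u\|_{L^\infty}$: it is a separate contradiction argument, which the paper carries out only \emph{after} this lemma (Lemma \ref{right_uniform_limit_zero_moving}). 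Until you replace the unavailable $C^0_{loc}$ convergence with a uniform gradient (or H\"older) bound such as that of~\cite{BH_Gradient}, your argument has a genuine gap.
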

\begin{proof}
The main problem here is that the convergence of $T^\epsilon_\delta$ to $T_\delta$ is in $H^1$, which is not strong enough to guarantee that this condition carries over.  Certainly the inequality $T_\delta(t,x,z) \leq \theta_0$ holds as a result of our construction of $T$, so what remains is to show that $T_\delta$ takes the value $\theta_0$ somewhere on the set $\{(t,x,z)\in\R\times\Omega: x - ct \leq 0\}.$

To this end, we use the bounds in Lemma \ref{fluid_smoothness} and the results of Berestycki and Hamel in~\cite{BH_Gradient} to get a uniform gradient bound on $T_\delta^\epsilon$.  This paper implies that since $u$ is bounded in $C^1$, there is a constant $C>0$ which is independent of $\epsilon$, though which does depend on $\delta$, such that $\|\nabla_{x,z}T^\epsilon_\delta\|_{\infty} < C$.  This, along with the trace theorem, gives us the existence of a point such that $T_\delta^\epsilon(t, x, z) = \theta_0$ and such that $x - ct \leq 0$.
\end{proof}

Now we have to worry about the limits as $t\to \pm\infty$.  Here it
will be convenient to use the functions in the moving frame.  This is
justified since the front speed $c_\delta$ is positive, and the norms in the moving and stationary frames are equivalent, up to a factor of $c_\delta$.

We first show that on the right, $T^m$ tends to zero.  In order to prove this, we wish to show that $T^m$ is bounded by an exponential function on $[R,\infty) \times \Omega_p$ for some $R$ as we did in Lemma \ref{flow_tends_to_zero} and Corollary \ref{reg_right_limit}.  The proof is the same in spirit but altered slightly since we do not begin with the finite domain problem here.

\begin{lem}\label{right_uniform_limit_zero_moving}
Let
$$R_\epsilon = \sup\{ r \in [0,\infty): c_\delta/10 \geq \max_{\substack{(x,z)\in \Omega_p,\\ s \geq r}} u^{m,\epsilon}(s, x, z)\}.$$
For $\epsilon$ sufficiently small, there is a constant $C_\delta>0$, which does not depend on $\epsilon$, such that 
\[
	R_\epsilon < C_\delta.
\]
\end{lem}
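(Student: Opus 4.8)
The plan is to argue by contradiction, recycling the recentering scheme of Lemma~\ref{flow_tends_to_zero} but now passing to the limit $\epsilon\downarrow 0$ in place of $n\to\infty$, using the uniform-in-$\epsilon$ estimates assembled in Section~\ref{solutions_on_the_infinite_cylinder}. Suppose no such $C_\delta$ exists. Then there is a sequence $\epsilon_k\downarrow 0$ with $R_{\epsilon_k}\to\infty$, and, unwinding the definition of $R_\epsilon$ together with the uniform continuity of $u^{m,\epsilon}$, the failure of the bound produces points $(s_k,x_k,z_k)$ with $s_k\to\infty$ at which $|u^{m,\epsilon_k}(s_k,x_k,z_k)|=c_\delta/10$. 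I recenter every function about $s_k$, writing $\Phi_k(s,x,z)=T^{m,\epsilon_k}(s+s_k,x,z)$ and likewise $U_k,W_k,S_k$ from $u^{m,\epsilon_k},\omega^{m,\epsilon_k},\Psi^{m,\epsilon_k}$.

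Since the estimates of Lemmas~\ref{temp_grad_bd}, \ref{vort_grad_bd}, \ref{flow_bounds}, and \ref{temperature_time_derivative} and of Corollary~\ref{fluid_smoothness} are translation invariant in $s$ and independent of $\epsilon$, the recentered functions inherit them. Passing to a subsequence, I extract limits $\Phi,U,W,S$ with $U_k\to U$ in $C^1_{\mathrm{loc}}$ (so that, with $(x_k,z_k)\to(x_\infty,z_\infty)$, the lower bound $|U(0,x_\infty,z_\infty)|=c_\delta/10$ survives) and $c_{\epsilon_k}\to c_0$. By Proposition~\ref{pos_front_speed_unregularized} we have $c_0\ge C_\delta>0$; crucially this forces $\epsilon_k/c_{\epsilon_k}^2\le \epsilon_k/C_\delta^2\to 0$, so the degenerate extra term disappears and the limit solves the moving-frame system
\begin{equation*}
-c_0\Phi_s+U\cdot\tn\Phi=L\Phi,\qquad -c_0 W_s-LW=\hat e\cdot\tn\Phi,\qquad L S=\tilde{W},\qquad U=(S_z,S_s+S_x),
\end{equation*}
with $L$ as in \eqref{definition_of_operator} and the usual boundary conditions; equivalently, in the stationary frame (where, because $\epsilon_k/c_{\epsilon_k}^2\to 0$, the temperature equation is genuinely parabolic) the limit obeys $\Phi_t+U\cdot\nabla\Phi=\Delta\Phi+f(\Phi)$. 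Because $s_k\to\infty$ and the normalization \eqref{normalization} forces $T^m\le\theta_0$ on $\{s\ge 0\}$, we have $0\le\Phi\le\theta_0$ on all of $\R\times\Omega_p$, so $f(\Phi)\equiv 0$ and the temperature equation is linear and homogeneous.

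It remains to show $\Phi$ is constant, which then forces $U\equiv 0$ and contradicts $|U(0,\cdot)|=c_\delta/10$. As in Lemma~\ref{bdry_deriv_dies}, the global $L^2$ gradient bounds together with the $C^{0,\alpha}$ bounds show $U$ and $\nabla_{x,z}\Phi$ tend uniformly to $0$ as $s\to\pm\infty$; integrating the temperature equation over slabs $[s_1,s_2]\times\Omega_p$ and using $\tn\cdot U=0$ then shows $\Phi(s,\cdot)$ converges to constants $\Phi^\pm$ as $s\to\pm\infty$. Integrating once more over $\R\times\Omega_p$, the advection and diffusion terms reduce to boundary contributions killed by the decay of $U$ and $\nabla_{x,z}\Phi$, leaving $c_0|\Omega_p|(\Phi^--\Phi^+)=0$; since $c_0>0$, $\Phi^-=\Phi^+$. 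The strong maximum principle for the now-parabolic homogeneous equation forces $\Phi\equiv\Phi^-$, hence $\nabla\Phi\equiv 0$; then $W$ solves a homogeneous heat equation with zero Dirichlet data, so $W\equiv 0$, whence $\tilde{W}\equiv 0$, $S\equiv 0$, and $U\equiv 0$, the desired contradiction.

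I expect the main obstacle to be the careful justification of this limit passage in the (near-)degenerate regime: one must confirm that recentering preserves the uniform-in-$\epsilon$ estimates, that the velocities converge strongly enough (at least $C^1_{\mathrm{loc}}$, via Corollary~\ref{fluid_smoothness}) to retain the lower bound $|U(0,\cdot)|=c_\delta/10$, and that the vanishing of $\epsilon_k/c_{\epsilon_k}^2$, guaranteed by the speed lower bound of Proposition~\ref{pos_front_speed_unregularized}, genuinely removes the second-order $s$-term so that the limiting problem is parabolic and the strong maximum principle applies.
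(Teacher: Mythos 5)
Your proposal is correct and follows essentially the same route as the paper: contradiction via $R_{\epsilon_k}\to\infty$, recentering, passage to the limit using the uniform-in-$\epsilon$ bounds and the positive lower bound on the speed, linearity of the limiting temperature equation from $\Phi\le\theta_0$, constancy of $\Phi$, and then the cascade $W\equiv 0$, $S\equiv 0$, $U\equiv 0$ contradicting $|U(0,\cdot)|=c_\delta/10$. The only cosmetic difference is that the paper first pins down $\Phi^+=0$ via the exponential bound of Corollary \ref{reg_right_limit} and deduces $\Phi\equiv 0$, whereas you only establish that $\Phi$ is constant — which suffices, since only $\tn\Phi$ enters the vorticity source.
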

\begin{proof}
Suppose that there is a subsequence $\epsilon_n \downarrow 0$ such that $R_{\epsilon_n} \to \infty$.  For ease of notation, we drop the ``$n$'' notation.  In this case, we recenter our equations so that
\[
\begin{split}
\Phi^{m,\epsilon(s, x, z)} = T^{m,\epsilon}(s + R_\epsilon, x, z),& ~~U^{m,\epsilon} (s,x,z) = u^{m, \epsilon}(s+R_\epsilon, x, z),\\
W^{m,\epsilon}(s,x,z) = \omega^{m,\epsilon}(s+R_\epsilon,x,z),& ~~S^{m,\epsilon}(s,x,z) = \Psi^{m,\epsilon}(s+R_\epsilon, x,z).
\end{split}
\]
These functions satisfy the same bounds as before so we can take limits as $\epsilon$ tends to zero, to obtain function $\Phi^m$, $U^m$, $W^m$, and $S^m$.  Since $R_\epsilon\to \infty$, we get that the $\Phi^m \leq \theta_0$.  Moreover, on $[0,\infty)\times \Omega_p$, $\Phi^m$ is bounded by the exponential defined in Corollary \ref{reg_right_limit}.  Hence the limit as $s$ tends to infinity is zero.  On the other hand, similar arguments as in Lemma \ref{bdry_deriv_dies} give us that there is a limit $\theta_-\in [0,\theta_0]$ such that
$$\theta_- = \lim_{s\to -\infty} \Phi^m(s,x,z),$$
where the limit is uniform in $\Omega_p$.  Hence integrating
$$-c_\delta\Phi^m_s + U \cdot \tn \Phi^m - L \Phi^m = 0$$
gives us that 
$$c_\delta\theta_- = 0.$$
Since $c_\delta$ is positive by Lemma \ref{pos_front_speed_unregularized}, we get that $\theta_-$ is zero.  In the stationary frame, where 
\[
\Phi(t,x,z) = \Phi^m(x - c_\delta t, x, z), ~~U(t,x,z) = U^m(x-c_\delta t, x,z),
\]
notice that $\Phi$ satisfies
\[
	\Phi_t + U\cdot \nabla \Phi = \Delta \Phi.
\]
Hence, the Hopf maximum principle along with the fact that $\theta_- = 0$ implies that $\Phi \equiv 0$.  This implies that $\Phi^m \equiv 0$.

Hence we have that $W^m$ satisfies
\[
	-c_\delta W^m_s - L W^m = 0,
\]
with Dirichlet boundary conditions on one boundary of $\Omega_p$ and
periodic boundary conditions on the other boundary.  Integrating this
over $\R\times \Omega_p$ and using a Poincar\'e inequality gives us
that $W^m \equiv 0$.  Therefore, $S^m$ satisfies
\[
	-L S^m = 0,
\]
with Dirichlet boundary conditions on one boundary of $\Omega_p$ and
periodic boundary conditions on the other boundary, whence $S \equiv 0$.  This finally implies that $U$ must be zero because of its relationship to $S$.
This contradicts the fact that $\max_{\Omega_p} U^m(0,x,z) = c_\delta/10>0$.  Hence it must be that $R_\epsilon$ is bounded.
\end{proof}

This result allows us to bound $T^m_\delta$ above by an exponential.  Eventually, we will show that the parameters in this exponential are bounded.  Hence, when we eventually take the limit $\delta\to0$, this bound will be preserved.

\begin{cor}\label{right_uniform_limit_zero}
For every $\delta$ and every $0<\alpha \leq c_\delta/8$, there are constants $C_\alpha, R_\delta>0$, which depend only on $\alpha$ and $\delta$, respectively, such that
$$
	T^m_\delta(s,x,z) \leq C_\alpha e^{- \alpha(s - R_\delta)}. 
$$
\end{cor}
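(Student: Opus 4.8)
The plan is to transfer the exponential supersolution argument of Corollary~\ref{reg_right_limit} to the $\epsilon$-limit functions $T^m_\delta$, the only new ingredient being that the fluid velocity is small to the right of a threshold depending only on $\delta$. Passing to the limit $\epsilon \to 0$ in the bound $R_\epsilon < C_\delta$ of Lemma~\ref{right_uniform_limit_zero_moving}, and recalling that $u^m_\delta \to 0$ as $s \to \infty$, I would first fix $R_\delta := C_\delta \geq 0$ so that $|u^m_\delta(s,x,z)| \leq c_\delta/10$ for all $s \geq R_\delta$ and $(x,z) \in \Omega_p$; by Proposition~\ref{pos_front_speed_unregularized} we also have $c_\delta > 0$. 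Since $R_\delta \geq 0$, the normalization~\eqref{normalization} gives $T^m_\delta \leq \theta_0$ on $[R_\delta,\infty)\times\Omega_p$, so $f(T^m_\delta) \equiv 0$ there and the moving-frame equation
\[
-c_\delta (T^m_\delta)_s + u^m_\delta \cdot \tn T^m_\delta - L T^m_\delta = f(T^m_\delta)
\]
reduces to the linear operator $-c_\delta \partial_s + u^m_\delta\cdot\tn - L$ on this half-cylinder.

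As in Lemma~\ref{front_speed}, let $\psi_e$ be the positive principal eigenfunction (with $\|\psi_e\|_{L^\infty} = 1$) of $-\Delta + 2\alpha\partial_x$ on $\Omega_p$ with periodic conditions on $P$ and $-\alpha\eta_1\psi_e + \eta\cdot\nabla\psi_e = 0$ on $B$, so that the eigenvalue $\mu_e$ is positive. Put $\gamma_A(s,x,z) = Ae^{-\alpha(s-R_\delta)}\psi_e(x,z)$; the boundary condition on $\psi_e$ is exactly what makes $\eta\cdot\tn\gamma_A = 0$ on $B$, matching the Neumann condition on $T^m_\delta$. A direct computation using $\Delta\psi_e - 2\alpha(\psi_e)_x = -\mu_e\psi_e$ gives
\[
-c_\delta(\gamma_A)_s + u^m_\delta\cdot\tn\gamma_A - L\gamma_A = Ae^{-\alpha(s-R_\delta)}\big[(c_\delta\alpha - \alpha^2 + \mu_e)\psi_e - \alpha u^m_{\delta,1}\psi_e + u^m_\delta\cdot\nabla\psi_e\big].
\]
Using $\alpha \leq c_\delta/8$, discarding the favorable term $\mu_e\psi_e \geq 0$, and absorbing the two velocity terms into the smallness of $u^m_\delta$ on $[R_\delta,\infty)$ exactly as in Corollary~\ref{reg_right_limit}, the bracket is bounded below by a positive multiple of $\alpha c_\delta \psi_e$. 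Hence $\gamma_A$ is a supersolution of the linear operator for every $A > 0$.

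Finally I would slide $A$ down as in Lemma~\ref{front_speed}. For $A$ large, $\gamma_A \geq 1 \geq T^m_\delta$ at $s = R_\delta$, while both functions tend to $0$ as $s \to \infty$; reducing $A$ to the smallest value for which $\gamma_A \geq T^m_\delta$ throughout $[R_\delta,\infty)\times\Omega_p$, the strong maximum principle rules out an interior or periodic-boundary contact point and the Hopf lemma rules out a contact point on $B$, forcing this smallest value to be $A_0 = \|\psi_e^{-1}\|_{L^\infty}$. Taking $C_\alpha = A_0$ (recall $\|\psi_e\|_{L^\infty}=1$) then yields $T^m_\delta \leq C_\alpha e^{-\alpha(s-R_\delta)}$. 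I expect the main obstacle to be precisely this comparison on the \emph{unbounded} half-cylinder: unlike the finite-domain setting of Corollary~\ref{reg_right_limit}, there is no right end $s = a_n$ to anchor the argument, so one must first use the uniform decay of both $T^m_\delta$ and $\gamma_A$ at $+\infty$ to guarantee that the infimum over admissible $A$ is attained and that the contact point does not escape to infinity, before invoking the strong maximum and Hopf principles. A secondary point to verify is that the velocity smallness furnished by Lemma~\ref{right_uniform_limit_zero_moving} suffices uniformly over $\alpha \in (0,c_\delta/8]$: the supersolution inequality needs $\|u^m_\delta\|_{L^\infty([R_\delta,\infty)\times\Omega_p)}$ small relative to $\alpha c_\delta/\|\nabla\psi_e/\psi_e\|_{L^\infty}$, and since $\|\nabla\psi_e\|_{L^\infty} = O(\alpha)$ as $\alpha \to 0$, the bound $c_\delta/10$ is adequate on the whole interval, keeping $R_\delta$ dependent on $\delta$ alone.
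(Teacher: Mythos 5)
Your plan takes a genuinely different route from the paper. The paper's proof of this corollary is a two-line combination: Corollary~\ref{reg_right_limit} already provides the bound $T^{m,\epsilon}\leq C_\alpha e^{-\alpha(s-R_\epsilon)}$, where the comparison argument was carried out on the \emph{finite} domain $[R_\epsilon,a_n]\times\Omega_p$ and anchored at the right end $s=a_n$ (where $T^m_n=0<\gamma_A$); Lemma~\ref{right_uniform_limit_zero_moving} upgrades $R_\epsilon$ to a bound $C_\delta$ independent of $\epsilon$; and the pointwise inequality $T^{m,\epsilon}\leq C_\alpha e^{-\alpha(s-C_\delta)}$ then survives the locally uniform passage $\epsilon\to 0$. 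No comparison on an unbounded domain is ever performed. You instead redo the supersolution/sliding argument directly for the limit function $T^m_\delta$ on the half-infinite cylinder $[R_\delta,\infty)\times\Omega_p$, which is more work and is exactly where the difficulty you flag lives.

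That flagged difficulty is real, and your proposed remedy does not dispatch it: knowing that \emph{both} $T^m_\delta$ and $\gamma_A$ tend to $0$ as $s\to+\infty$ does not prevent the contact point from escaping to infinity, nor does it show the infimum over admissible $A$ is attained --- the relative decay rate of $T^m_\delta$ against $e^{-\alpha s}$ is precisely the quantity in question, so one cannot assume it. The standard fix is to perturb the supersolution additively: since $f\equiv 0$ on $\{T^m_\delta\leq\theta_0\}$ the operator is linear and annihilates constants, so $\gamma_A+\eta$ is still a supersolution for $\eta>0$; one compares on $[R_\delta,S_\eta]$ with $S_\eta$ chosen so that $T^m_\delta\leq\eta$ for $s\geq S_\eta$, applies the maximum principle on the compact set, and sends $\eta\to 0$. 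With that repair (and your correct observation that $\|\nabla\psi_e/\psi_e\|_{L^\infty}=O(\alpha)$ keeps the velocity threshold $c_\delta/10$ adequate uniformly in $\alpha\in(0,c_\delta/8]$), your argument closes --- but the paper's route of doing the comparison before taking $a\to\infty$ and $\epsilon\to 0$ avoids the issue entirely and is the intended proof.
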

\begin{proof}
This is simply a result of combining the conclusions of Corollary \ref{reg_right_limit} and Lemma \ref{right_uniform_limit_zero_moving}.
\end{proof}

The last property that we need to check is that the functions satisfy the first condition of equation \eqref{pulsating_front}.  Later, we will use this to show that this still holds in the limit $\delta \to 0$.

\begin{lem}\label{periodicity}
The functions $T_\delta$ and $u_\delta$ satisfy the condition \eqref{pulsating_front}.
\end{lem}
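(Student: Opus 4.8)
The plan is to reduce the pulsating front condition \eqref{pulsating_front} to the $\ell$-periodicity in $x$ of the moving-frame functions $T^m_\delta$ and $u^m_\delta$, and then to observe that this periodicity is built into the construction. Recall that the stationary-frame functions are obtained from the moving-frame ones by the change of variables $T_\delta(t,x,z) = T^m_\delta(x - c_\delta t, x, z)$ and $u_\delta(t,x,z) = u^m_\delta(x - c_\delta t, x, z)$, where $T^m_\delta$ and $u^m_\delta$ are extended from the period cell $\Omega_p$ to all of $\Omega$ periodically in $x$. Since $c_\delta > 0$ by Proposition \ref{pos_front_speed_unregularized}, this change of variables is well-defined; and since $c_\epsilon \to c_\delta$ with $T^{m,\epsilon}\to T^m_\delta$ locally, one has $T_\delta(t,x,z) = \lim_\epsilon T^{m,\epsilon}(x-c_\epsilon t,x,z) = T^m_\delta(x-c_\delta t,x,z)$, so $T_\delta$ genuinely has the moving-frame form (and likewise $u_\delta$).

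First I would record that $T^m_\delta$ and $u^m_\delta$ are $\ell$-periodic in their second argument, i.e. $T^m_\delta(s, x+\ell, z) = T^m_\delta(s, x, z)$ and similarly for $u^m_\delta$. In the finite-domain problem \eqref{eq_finiteprob1}--\eqref{eq_finiteprob2} every function is required to satisfy periodic boundary conditions on $[-a,a]\times P$, where $P=\{x=0,\ell\}$. Because the coefficients of the moving-frame operator $L_\epsilon$ and the nonlinearity $f$ are $\ell$-periodic in $x$, solving on $\Omega_p$ with these periodic boundary conditions is equivalent to solving on the full periodic cylinder, so the periodic extension is itself a solution that is $\ell$-periodic in $x$. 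This periodicity is preserved under the local limits $a\to\infty$ and $\epsilon\to 0$ taken in Sections \ref{solutions_on_the_infinite_cylinder} and \ref{solutions_of_the_unregularized_equation}, since periodic boundary conditions pass to local uniform and weak limits. Hence $T^m_\delta$ and $u^m_\delta$ are $\ell$-periodic in $x$.

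With periodicity in hand, the verification is a direct computation. Using $T_\delta(t,x,z) = T^m_\delta(x - c_\delta t, x, z)$ we find
\[
T_\delta\Big(t + \tfrac{\ell}{c_\delta}, x, z\Big) = T^m_\delta(x - c_\delta t - \ell,\, x,\, z),
\]
while
\[
T_\delta(t, x - \ell, z) = T^m_\delta(x - \ell - c_\delta t,\, x - \ell,\, z) = T^m_\delta(x - c_\delta t - \ell,\, x,\, z),
\]
the last equality by the $\ell$-periodicity of $T^m_\delta$ in $x$. Comparing the two displays gives the first identity in \eqref{pulsating_front}; the identical computation with $u^m_\delta$ in place of $T^m_\delta$ yields the second.

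I do not expect a genuine analytic obstacle here: the content is almost entirely bookkeeping. The one point deserving care is that, although the convergence as $\epsilon\to 0$ is only in $H^1_{loc}$, the limiting functions must be classical for \eqref{pulsating_front} to hold pointwise. This is supplied by the elliptic/parabolic bootstrap already available: $\tilde{\omega}_\delta$ is smooth by the convolution, hence $\Psi_\delta$ and $u_\delta = \nabla^\perp \Psi_\delta$ are smooth, and then $T_\delta$ solves a parabolic equation with smooth coefficients and is therefore classical, so the identities above are meaningful pointwise.
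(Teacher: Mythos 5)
Your final computation is correct, and the reduction of \eqref{pulsating_front} to the $\ell$-periodicity in $x$ of the moving-frame profiles is a legitimate way to organize the argument; the periodicity of $T^m_\delta$ and $u^m_\delta$ in their second argument is indeed built into the construction and survives the limits. But the entire content of the lemma is hidden in the one sentence where you assert
\[
T_\delta(t,x,z) \;=\; \lim_{\epsilon\to 0} T^{m,\epsilon}(x - c_\epsilon t, x, z) \;=\; T^m_\delta(x - c_\delta t, x, z),
\]
and as written this step is a genuine gap. The convergence available as $\epsilon\to 0$ is only $H^1_{loc}$ in the stationary frame (hence strong $L^2_{loc}$ and a.e.\ along a subsequence); it is not locally uniform, and there is no uniform-in-$\epsilon$ $L^\infty$ bound on $\partial_s T^{m,\epsilon}$ that would give equicontinuity in the first argument. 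So you cannot simply pass the perturbation $x - c_\epsilon t \to x - c_\delta t$ inside the limit: you must estimate $T^{m,\epsilon}(x - c_\epsilon t,\cdot,\cdot) - T^{m,\epsilon}(x - c_\delta t,\cdot,\cdot)$, i.e.\ quantify the effect of the speed mismatch $c_{\epsilon,\delta}\neq c_\delta$. This is exactly what the paper's proof does: it uses that $T^\epsilon_\delta$ is already a pulsating front with speed $c_{\epsilon,\delta}$, writes the discrepancy as a time shift of size $|\ell/c_{\epsilon,\delta} - \ell/c_\delta|$, and controls it in $L^2$ by the uniform bound on $\int |(T^\epsilon_\delta)_t|^2$ from Lemma \ref{temperature_time_derivative}, concluding the identity a.e.\ and then everywhere by continuity of the limit. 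Your proof becomes correct once you insert this estimate (or an equivalent one) at the flagged step; the point you do flag (classical regularity of the limit, needed to pass from a.e.\ to pointwise equality) is real but secondary, and is handled the same way in the paper.
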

\begin{proof}
The argument for this is the same as that given in \cite{BH_Period}.  We fix any positive real number $B$ and any compact set $K \subset \Omega$.  Notice that $T^\epsilon$ converges in $L^2$ to $T$ on $[-A,A]\times K$.  Hence, to show that
\begin{equation}\label{L2_periodicity}
	\int_{[-A,A]\times K} \left[T_\delta(t + \frac{\ell}{c_\delta}, x, z) - T_\delta(t, x - \ell, z) \right]^2dxdzdt = 0
\end{equation}
it suffices to prove that
$$\lim_{\epsilon\to 0} \int_{[-A,A]\times K} \left[T_\delta^\epsilon(t + \frac{\ell}{c_\delta}, x, z) - T_\delta^\epsilon(t, x - \ell, z) \right]^2dxdzdt = 0.$$
To this end, we first notice that $T_\delta^\epsilon(t + \frac{\ell}{c_{\epsilon,\delta}}, x, z) = T_\delta^\epsilon(t, x - \ell, z)$.  Hence we calculate:

\begin{equation*}
	\begin{split}
		\int_{[-A,A]\times K} &\left[T_\delta^\epsilon(t + \frac{\ell}{c_\delta}, x, z) - T_\delta^\epsilon(t, x - \ell, z) \right]^2dxdzdt\\
			&= \int_{[-A,A]\times K} \left[T_\delta^\epsilon(t + \frac{\ell}{c_\delta}, x, z) - T_\delta^\epsilon(t + \frac{\ell}{c_{\epsilon,\delta}}, x , z) \right]^2dxdzdt\\
			&\leq \left|\frac{\ell}{c_{\epsilon,\delta}} - \frac{\ell}{c_\delta}\right|^2\int_{\R\times K} |(T_\delta^\epsilon)_t|^2dxdzdt\\
			&\leq C\cdot \left|\frac{\ell}{c_{\epsilon,\delta}} - \frac{\ell}{c_\delta}\right|^2.
	\end{split}
\end{equation*}

Since $c_\epsilon \to c$, then we have that equation \eqref{L2_periodicity} holds.  Hence $T_\delta(t + \ell/c, x, z) = T_\delta(t, x - \ell, z)$ holds almost everywhere.  Of course, by parabolic regularity, $T_\delta$ is continuous.  Hence, the equality holds everywhere.  The same argument works for $u_\delta$ as well.
\end{proof}

Finally, in order to take the limit as $\delta$ tends to zero, we need to have some uniform bounds on our functions.  We will do that here.  First, notice that the $L^2$ gradient bounds we obtained in Section \ref{solutions_on_the_infinite_cylinder} did not depend on $\delta$, except for the bound on the time derivative of $T_\delta$.  These bounds, along with parabolic regularity, are enough to get upper bounds on the $X^{1,2}$ norm of $\omega$.  This gives us an upper bound on the $C^{0,\alpha}$ norm of $u_\delta$ as in Lemma~\ref{flow_bounds}.  Arguing as in Corollary \ref{front_speed_upper_bound_independent_of_epsilon} will then provide an upper bound on the front speed, $c_\delta$, which, in turn, provides an upper bound on the $L^2$ norm of the time derivative of $T_\delta$.  From here, parabolic and elliptic regularity gives us Sobolev and H\"older bounds independent of $\delta$ that we summarize in the lemma below.

\begin{lem}\label{parabolic_bounds}
There exists a constant $C>0$, independent of $\delta$ such that
\[
	|c_\delta| + \|T_\delta\|_{C^{1+\alpha, 2+\alpha}}+\|\omega_\delta\|_{C^{1+\alpha, 2+\alpha}}+\|\Psi_\delta\|_{C^{1+\alpha, 2+\alpha}}+\|u_\delta\|_{C^{1+\alpha, 2+\alpha}} \leq C.
\]
Moreover, for every compact set $K \subset \Omega$, there exists a constant $C_K = C(\diam(K))>0$, which depends only on the diameter of $K$ such that
\[
	\|T_\delta\|_{X^{1,2}(\R\times K)}+\|\omega_\delta\|_{X^{1,2}(\R\times K)}+\|\Psi_\delta\|_{X^{1,2}(\R\times K)}+\|u_\delta\|_{X^{1,2}(\R\times K)} \leq C_K.
\]
\end{lem}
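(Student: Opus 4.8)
The plan is to run a regularity bootstrap on the limiting system \eqref{stationary_frame_eqn}, exploiting the observation recorded just above the statement: every $L^2$ gradient bound produced in Section \ref{solutions_on_the_infinite_cylinder} is already independent of $\delta$, with the sole exception of the time-derivative bound for $T_\delta$ in Lemma \ref{temperature_time_derivative}, whose constant enters only through $\|u\|_{L^\infty}$. Thus the whole statement reduces to producing a bound on $\|u_\delta\|_{L^\infty}$ that does not see $\delta$; once that is in hand the constants in Lemma \ref{temperature_time_derivative} and Corollary \ref{front_speed_upper_bound_independent_of_epsilon} become uniform and the rest closes by standard parabolic and elliptic theory.

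First I would upgrade the vorticity estimate. Since $\hat{e}\cdot\nabla T_\delta$ is bounded in $L^2_{loc}$ uniformly in $\delta$ by Lemma \ref{temp_grad_bd}, and $\omega_\delta$ is controlled in $L^2_{loc}$ by Poincar\'e together with Lemma \ref{vort_grad_bd}, parabolic regularity applied to the second equation of \eqref{stationary_frame_eqn} bounds $\omega_\delta$ in $X^{1,2}(\R\times K)$ by a constant depending only on $\diam(K)$. The crucial step is the transfer to $\tilde{\omega}_\delta$: here I emphatically do \emph{not} use the extra smoothness of the convolution, whose higher-order norms blow up as $\delta\to 0$ (cf. Corollary \ref{fluid_smoothness}); I use only that convolution against $\varphi_\delta$ commutes with differentiation and, by Young's inequality with $\|\varphi_\delta\|_{L^1}=1$, is bounded with constant $1$ at each fixed differentiation order. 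After the bounded extension to $\R^3$ used to define the convolution, and because $\varphi$ is compactly supported with $\delta<1$, this yields $\|\tilde{\omega}_\delta\|_{X^{1,2}(\R\times K)}\le C\|\omega_\delta\|_{X^{1,2}(\R\times K')}$ on a slightly larger $K'$, uniformly in $\delta$.

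Next I would feed this into the stream equation. Now that $\epsilon=0$, the third line of \eqref{stationary_frame_eqn} is the genuinely (spatially) elliptic problem $\Delta\Psi_\delta=\tilde{\omega}_\delta$ with $\Psi_\delta=0$ on $\R\times\partial\Omega$, solved slicewise in $t$, so there is no degeneracy and Lemma \ref{2d_schauder_trick} is not needed. To obtain a pointwise-in-$t$ source I would promote the $X^{1,2}$ bound on $\omega_\delta$ to an $L^\infty_t H^1_{x,z}$ bound by controlling $\partial_t\|\omega_\delta(t,\cdot)\|_{H^1(K)}^2$ through $\partial_t\omega_\delta,\partial_t\nabla\omega_\delta\in L^2$, and then use the two-dimensional embedding $H^1\hookrightarrow L^p$ for every finite $p$; convolution preserves this, so $\sup_t\|\tilde{\omega}_\delta(t,\cdot)\|_{L^p(K)}$ is uniform in $\delta$. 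The $W^{2,p}$ elliptic estimates, summed over unit $x$-cells as in \cite{BCR} to keep the constant independent of the $x$-extent, then bound $\sup_t\|\Psi_\delta(t,\cdot)\|_{W^{2,p}(K)}$, and $W^{2,p}\hookrightarrow C^{1,\alpha}$ for $p>2$ gives a $C^{0,\alpha}$ bound, and in particular $\|u_\delta\|_{L^\infty(\R\times\Omega)}\le C$, all independent of $\delta$, exactly as in Lemma \ref{flow_bounds}.

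With $\|u_\delta\|_{L^\infty}$ uniform the loop closes. Corollary \ref{front_speed_upper_bound_independent_of_epsilon}, whose supersolution argument uses only $\|u\|_{L^\infty}$, gives $c_\delta\le C$; combined with positivity from Proposition \ref{pos_front_speed_unregularized} this controls $|c_\delta|$. Re-running the computation of Lemma \ref{temperature_time_derivative} with the $\delta$-independent $\|u_\delta\|_{L^\infty}$ yields $\int_{\R\times K}(T_\delta)_t^2\le C_K$ uniformly, which with Lemma \ref{temp_grad_bd} and the equation $\Delta T_\delta=(T_\delta)_t+u_\delta\cdot\nabla T_\delta-f(T_\delta)$ gives the $X^{1,2}$ bound for $T_\delta$; the $X^{1,2}$ bounds for $\omega_\delta$, $\Psi_\delta$, $u_\delta$ are already in hand. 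The H\"older bounds then follow by an ordinary Schauder bootstrap: parabolic Schauder on the $T_\delta$ equation (using $u_\delta\in C^{0,\alpha}$, $0\le T_\delta\le 1$, $f$ smooth) gives $T_\delta\in C^{1+\alpha,2+\alpha}$; then $\hat{e}\cdot\nabla T_\delta\in C^{0,\alpha}$ gives $\omega_\delta\in C^{1+\alpha,2+\alpha}$; convolution preserves $C^{1+\alpha,1+\alpha}$ at fixed order, so $\tilde{\omega}_\delta\in C^{1+\alpha,1+\alpha}$ uniformly, and elliptic Schauder slicewise in $t$ (applied also to $\partial_t\Psi_\delta$) promotes $\Psi_\delta$ to $C^{1+\alpha,3+\alpha}$, whence $u_\delta=\nabla^\perp\Psi_\delta\in C^{1+\alpha,2+\alpha}$. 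The main obstacle is precisely this convolution transfer: one must avoid the $\delta$-singular high-order smoothing of Corollary \ref{fluid_smoothness} and instead argue that at each fixed differentiation order the convolution costs nothing, which is exactly what keeps every constant independent of $\delta$.
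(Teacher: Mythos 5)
Your proposal is correct and follows essentially the same chain as the paper's own (very brief) argument, which is the paragraph preceding the lemma: the $\delta$-independent $L^2$ gradient bounds plus parabolic regularity give the $X^{1,2}$ control of $\omega_\delta$, this yields a uniform $C^{0,\alpha}$ bound on $u_\delta$ as in Lemma \ref{flow_bounds}, which closes the loop through the front-speed bound of Corollary \ref{front_speed_upper_bound_independent_of_epsilon} and the $T_t$ estimate of Lemma \ref{temperature_time_derivative}, after which Schauder theory finishes. Your explicit observation that the convolution must be used only at fixed differentiation order (Young's inequality with $\|\varphi_\delta\|_{L^1}=1$), rather than through the $\delta$-singular smoothing of Corollary \ref{fluid_smoothness}, is precisely the detail the paper leaves implicit and is correctly handled.
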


In order to finish Theorem \ref{the_theorem}, we need to take the limit $\delta \to 0$.  Afterwards, we need to check that the front speed, $c$, is positive and that our solutions are non-trivial, satisfy condition \eqref{normalization}, and satisfy equation \eqref{pulsating_front}.  Most of these claims are proved similarly as the analogous results from this section.

\begin{proof}[Proof of Theorem \ref{the_theorem}]
Notice that Lemma \ref{parabolic_bounds} gives us uniform bounds in Sobolev and H\"older spaces which are independent of $\delta$.  As a result, we can take limits in these spaces to obtain functions $T$, $u$, $\Psi$, and $\omega$.  By the theory of convolutions, we get that $\tilde{\omega}_\delta$ and $\omega_\delta$ converge to the same function, $\omega$.  Hence, these functions satisfy the stream function formulation of our problem, which in turn implies that they satisfy the system \eqref{the_PDE} - \eqref{boundary_conditions}.

Arguing exactly as in Proposition \ref{pos_front_speed_unregularized}, we see that $c_\delta$ is bounded away from zero.  Moreover, arguing as in Lemma \ref{front_speed_upper_bound_independent_of_epsilon} shows that it is bounded uniformly above.  Hence $c_\delta$ converges to a positive number $c$.

Since we have convergence in the H\"older norms as $\delta$ tends to zero, and since, for each $\delta$, these functions satisfy the condition \eqref{pulsating_front} and normalization condition \ref{normalization}, then our limiting functions will satisfy these as well.

Hence we need only check their limits as $x\to\pm \infty$.  To do this it is convenient to look in the moving frame, where the equivalent limit to check is what happens when $s \to \pm \infty$.  The limits are easiest to prove for $\Psi$, $u$, and $\omega$.  The bounds in Lemma \ref{parabolic_bounds} imply that these functions are globally bounded in $L^2$ and $C^{1,\alpha}$ in the moving frame.  Hence as $s\to\pm \infty$, these functions tend to zero.

Now we will check the limits of $T$.  First, arguing exactly as in Lemmas \ref{right_uniform_limit_zero_moving}, we can find a universal constant $R>0$, which does not depend on $\delta$ such that
\[
	c/10 = \max_{\substack{(x,z)\in \Omega_p,\\ s \geq R}} u^{m}_\delta(s, x, z).
\]
Hence arguing as in Corollary \ref{reg_right_limit}, we can bound $T^m$ by an exponential.  It follows that $T^m(s,x,z) \to 0$ as $s\to \infty$.  This is equivalent to $T(t,x,z) \to 0$ as $x \to \infty$.

To get the limit as $s$ tends to $-\infty$, one simply argues as in Lemma \ref{bdry_deriv_dies}.  To see that $\theta_-$ is positive, we notice that integrating the equation for $T^m$ implies that
\[
	c |\Omega_p|\theta_- = \int_{\R\times\Omega_p} f(T^m) dxdzds.
\]
Hence if $\theta_- = 0$, then $T \leq \theta_0$ on $\R\times \Omega$.  In this case, the parabolic maximum principle applied in the stationary frame, along with Lemma \ref{pinned_down_at_middle} imply that $T \equiv \theta_0$.  This cannot be true since $T(t,x,z) \to 0$ as $t\to -\infty$.  Hence $\theta_- > 0$.  Notice that this also implies that
\[
	\int_{\R\times \Omega_p} f(T^m) dxdzds >0, 
\]
and, hence, $T$ takes values larger than $\theta_0$.

Finally, we show that if \eqref{ignition_smallness_condition} is satisfied, then $\theta_- = 1$.  This proof is similar, though not identical, to the proofs in~\cite{BCR, Lew}.  First, we assume that $p \in (2,3]$ since if $p > 3$ then $(T - \theta_0)^p \leq (T - \theta_0)_+^3$.  Hence, in this case, we may assume condition \eqref{ignition_smallness_condition} holds with $p = 3$.  To begin notice that since
\[
	\int_{\R\times \Omega_p} f(T^m) dxdzds  < \infty,
\]
then $\theta_- \in (0,\theta_0]\cup \{1\}$.  Hence we suppose that $\theta_-\leq \theta_0$.  Now let $K = \{(x,z) \in \Omega: x \in [0,\ell]\}$.  Define
\[
	M(t) = \max_{(x,z) \in K} T(t,x,z) ~~\text{ and }~~ m(t) = \min_{(x,z)\in K} T(t,x,z),
\]
for any $t\in \R$.  First notice that for any $t\in \R$, we have
\begin{equation}\label{max_min}
\begin{split}
	M(t) - m(t)
		&= M(t) - \frac{1}{|K|} \int_K T(t,x,z)dxdz + \frac{1}{|K|}\int_K T(t,x,z)dxdz - m(t)\\
		&\leq 2 \left\| T(t,\cdot,\cdot) - |K|^{-1}\int_K T(t,x,z)dxdz\right\|_{L^\infty(K)}\\
		&\leq C\left\| T(t,\cdot,\cdot) - |K|^{-1}\int_K T(t,x,z)dxdz\right\|_{W^{1,p}(K)}\\
		&\leq C \|\nabla T\|_{L^p(K)}.
\end{split}
\end{equation}
The second inequality comes from the Sobolev inequality and the third inequality comes from the Poincar\'e inequality.  We need a bound on the $L^p$ norm of $\nabla T$, which we will obtain by getting a bound on the $L^3$ norm of $\nabla T$ and interpolating with the $L^2$ bound of $\nabla T$.  To this end we follow the development in~\cite{Lew}.  First notice that the work in Lemma \ref{temperature_time_derivative} shows that if $\theta_- \leq \theta_0$ then we have that
\[
	\int_{\R\times K} |T_t|^2dxdzdt \leq C \int_{\R\times K} |\nabla T|^2 dxdzdt.
\]
Multiplying \eqref{the_PDE} by $T|\nabla T|$ and integrating over $\R\times K$ gives us
\begin{equation*}
	\begin{split}
		\left| \int T |\nabla T| \Delta T dtdxdz\right|
			&= \left| \int T |\nabla T| T_t + \int (u\cdot \nabla T)T|\nabla T| - \int f(T)T|\nabla T|  \right|\\
			&\leq (C + \|u\|_{L^\infty}) \|\nabla T\|_{L^2}^2 + \int f(T) |\nabla T|\\
			&\leq C \|\nabla T\|_{L^2}^2 + C \int f(T).
	\end{split}
\end{equation*}
To understand the right hand side, we notice that
\begin{equation*}
	\int |\nabla T|^3  = -\int T|\nabla T| \Delta T - \int T (\nabla T \cdot \nabla|\nabla T|).
\end{equation*}
We need to understand the last term on the right.  To this end, the argument in Lemma 3.4 of~\cite{Lew} implies that
\[
\|\nabla^2 T\|_{L^2} \leq C( \| \Delta T\|_{L^2} + \|\nabla T\|_{L^2}).
\]
Also, \eqref{the_PDE}  implies that
\[
	\|\Delta T\|_{L^2}
		\leq \| T_t \|_{L^2} + \| u\cdot \nabla T\|_{L^2} + \|f(T)\|_{L^2}.
\]
Combining all this gives us
\begin{equation}\label{L3_bound}
	\int |\nabla T|^3 \leq C \left( \int |\nabla T|^2 + \int f(T)\right).
\end{equation}
We also observe that
\[
	\theta_- = \int_{\R\times K} f(T) \frac{dtdxdz}{|K|}, ~~ \frac{\theta_-^2}{2} + \int_{\R\times K} |\nabla T(t,x,z)|^2 \frac{dtdxdz}{|K|} = \int_{\R\times K} Tf(T) \frac{dtdxdz}{|K|}
\]
so that
\begin{equation}\label{temperature_gradient_ignition_relationship}
	\int_{\R\times K} |\nabla T|^2 dtdxdz = \int_{\R\times K} \left(T - \frac{\theta_-}{2}\right)f(T) dtdxdz \geq  \frac{\theta_0}{2} \int f(T)
\end{equation}
Hence equations \eqref{max_min}, \eqref{L3_bound}, and \eqref{temperature_gradient_ignition_relationship} give us that
\begin{equation}\label{max_min_gradient_temperature}
	\int_{\R} (M(t) - m(t))^p dt
		\leq C\int_{\R\times K} |\nabla T|^2 dtdxdz.
\end{equation}
Define $C_{\Omega,p}$ in \eqref{ignition_smallness_condition} to be the reciprocal of the constant in the above equation, namely
\[
C_{\Omega,p} = \frac{1}{C}\]
with $C$ as in \eqref{max_min_gradient_temperature}.

We claim that $m(t)$ is non-decreasing in $t$.  To see this, one can first show it on the finite domain using the maximum principe since $T_{n, \delta}^{\epsilon}$ satisfies an elliptic equation.  Then this property will hold through the limits since when we take $n\to\infty$ and $\delta\to 0$ we have convergence in H\"older norms and when we take $\epsilon \to 0$ we have bounds on the $L^2$ and $L^\infty$ norms of $T$ as in Lemma \ref{pinned_down_at_middle}.  Because $m$ is non-decreasing and $T$ is non-constant, then it follows that $m(t) < \theta_0$ for all $t$.

Since $m(t) < \theta_0$, $M(t) \geq T(t,x,z)$ and by \eqref{max_min_gradient_temperature}, we have that
\[
	\int_{\R\times K} |\nabla T(t,x,z)|^2
		\geq C_{\Omega,p}\int_{\R} (M(t) - m(t))^pdt
		\geq C_{\Omega,p}\int_{\R\times K} (T(t,x,z) - \theta_0)^p_+ dtdxdz.
\]
Hence using \eqref{ignition_smallness_condition} we obtain
\[
	\begin{split}
	C_{\Omega,p}\int_{\R\times K} \left(T - \frac{\theta_-}{2}\right)(T- \theta_0)^p_+ dtdxdz
		&\geq \int_{\R\times K} \left(T - \frac{\theta_-}{2}\right) f(T)dtdxdz\\
		&= \int_{\R\times K} |\nabla T|^2 dtdxdz\\
		&\geq C_{\Omega,p}\int_{\R\times K} (T - \theta_0)^p_+ dtdxdz.
	\end{split}
\]
The left hand side is smaller than the right hand side unless $T \equiv \theta_0$.  This can't occur because of Corollary \ref{right_uniform_limit_zero}.  Hence, we have reached a contradiction, implying that $\theta_- > \theta_0$.  We then conclude that $\theta_- = 1$.
\end{proof}

\bibliography{henderson-pulsating_fronts.bib}{}

\providecommand{\bysame}{\leavevmode\hbox to3em{\hrulefill}\thinspace}
\providecommand{\MR}{\relax\ifhmode\unskip\space\fi MR }
% \MRhref is called by the amsart/book/proc definition of \MR.
\providecommand{\MRhref}[2]{%
  \href{http://www.ams.org/mathscinet-getitem?mr=#1}{#2}
}
\providecommand{\href}[2]{#2}
\begin{thebibliography}{10}

\bibitem{BKV}
M.~Belk, B.~Kazmierczak, and V.~Volpert, \emph{Existence of
  reaction-diffusion-convection waves in unbounded strips}, Int. J. Math. Math.
  Sci. (2005), no.~2, 169--193. \MR{2143750 (2006b:35175)}

\bibitem{BCR}
H.~Berestycki, P.~Constantin, and L.~Ryzhik, \emph{Non-planar fronts in
  {B}oussinesq reactive flows}, Ann. Inst. Henri Poincare \textbf{23} (2006),
  407 -- 437.

\bibitem{BH_Period}
H.~Berestycki and F.~Hamel, \emph{Front propagation in periodic excitable
  media}, Comm. Pure Appl. Math. \textbf{55} (2002), no.~8, 949--1032.

\bibitem{BH_Gradient}
\bysame, \emph{Front propagation in periodic excitable media}, Comm. Pure Appl.
  Math. \textbf{55} (2002), no.~8, 949--1032.

\bibitem{CKOR}
P.~Constantin, A.~Kiselev, A.~Oberman, and L.~Ryzhik, \emph{Bulk burning rate
  in passive-reactive diffusion}, Arch. Ration. Mech. Anal. \textbf{154}
  (2000), no.~1, 53--91.

\bibitem{CLR}
P.~Constantin, M.~Lewicka, and L.~Ryzhik, \emph{Travelling waves in
  two-dimensional reactive {B}oussinesq systems with no-slip boundary
  conditions}, Nonlinearity \textbf{19} (2006), no.~11, 2605--2615.

\bibitem{Fisher}
R.~Fisher, \emph{The wave of advance of advantageous genes}, Ann. Eugenics
  \textbf{7} (1937), 355--369.

\bibitem{GT}
D.~Gilbarg and N.~Trudinger, \emph{Elliptic partial differential equations of
  second order}, Classics in Mathematics, Springer-Verlag, Berlin, 2001,
  Reprint of the 1998 edition.

\bibitem{KPP}
{A.N.} Kolmogorov, {I.G.} Petrovskii, and {N.S.} Piskunov, \emph{{\'E}tude de
  l'\'equation de la chaleurde mati\`ere et son application \`a un probl\`eme
  biologique}, Bull. Moskov. Gos. Univ. Mat. Mekh. \textbf{1} (1937), 1--25,
  See~\cite{Pel} pp. 105-130 for an {E}nglish translation.

\bibitem{KrylovHolder}
N.~V. Krylov, \emph{Lectures on elliptic and parabolic equations in {H}\"older
  spaces}, Graduate Studies in Mathematics, vol.~12, American Mathematical
  Society, Providence, RI, 1996.

\bibitem{KrylovSobolev}
\bysame, \emph{Lectures on elliptic and parabolic equations in {S}obolev
  spaces}, Graduate Studies in Mathematics, vol.~96, American Mathematical
  Society, Providence, RI, 2008.

\bibitem{Lew}
M.~Lewicka, \emph{Existence of traveling waves in the {S}tokes-{B}oussinesq
  system for reactive flows}, J. Differential Equations \textbf{237} (2007),
  no.~2, 343--371.

\bibitem{LM}
M.~Lewicka and P.~Mucha, \emph{On the existence of traveling waves in the 3{D}
  {B}oussinesq system}, Comm. Math. Phys. \textbf{292} (2009), no.~2, 417--429.

\bibitem{MX}
S.~Malham and J.~Xin, \emph{Global solutions to a reactive {B}oussinesq system
  with front data on an infinite domain}, Comm. Math. Phys. \textbf{193}
  (1998), no.~2, 287--316.

\bibitem{N}
Louis Nirenberg, \emph{Topics in nonlinear functional analysis}, Courant
  Lecture Notes in Mathematics, vol.~6, New York University Courant Institute
  of Mathematical Sciences, New York, 2001.

\bibitem{Pel}
Pierre Pelc{\'e} (ed.), \emph{Dynamics of curved fronts}, Perspectives in
  Physics, Academic Press Inc., Boston, MA, 1988.

\bibitem{Shig}
N.~Shigesada, K.~Kawasaki, and E.~Teramoto, \emph{Traveling periodic waves in
  heterogeneous environments}, Theoretical Population Biology \textbf{30}
  (1986), no.~1, 143 -- 160.

\bibitem{Xin}
J.~Xin, \emph{Existence and uniqueness of travelling waves in a
  reaction-diffusion equation with combustion nonlinearity}, Indiana Univ.
  Math. J. \textbf{40} (1991), no.~3, 985--1008.

\bibitem{XinB}
\bysame, \emph{An introduction to fronts in random media}, Surveys and
  Tutorials in the Applied Mathematical Sciences, vol.~5, Springer, New York,
  2009.

\end{thebibliography}
\bibliographystyle{amsplain}

\end{document}